\providecommand{\tabularnewline}{\\}
\theoremstyle{plain}
\theoremstyle{plain}
\newtheorem{thm}{Theorem}
\newenvironment{lyxlist}[1]
{\begin{list}{}
{\settowidth{\labelwidth}{#1}
 \setlength{\leftmargin}{\labelwidth}
 \addtolength{\leftmargin}{\labelsep}
 }}
{\end{list}}
  \theoremstyle{definition}
  \newtheorem{defn}[thm]{Definition}
  \theoremstyle{remark}
  \newtheorem*{rem*}{Remark}
 \theoremstyle{definition}
  \newtheorem{example}[thm]{Example}
  \theoremstyle{plain}
  \newtheorem{prop}[thm]{Proposition}
  \theoremstyle{plain}
  \newtheorem{lem}[thm]{Lemma}
  \theoremstyle{plain}
  \newtheorem{cor}[thm]{Corollary}
\newcommand\SPol{\mathrm{Spol}}
\newcommand\lcm{\mathrm{lcm}}
\newcommand\lt{\mathrm{HM}}
\newcommand\lc{\mathrm{HC}}
\newcommand\ltmul[1]{\sigma_{#1}}
\newcommand\ideal[1]{\ensuremath{\left<#1\right>}}
\definecolor{commentcolor}{rgb}{0.6, 0.6, 0.6}
\definecolor{darklinkcolor}{rgb}{0,0,0.4}
\newcommand\comment[1]{\textcolor{commentcolor}{ --- #1}}
\newcommand\termset{\ensuremath{\mathbb{M}}}
\newcommand\ordlt{\ensuremath{<_T}}
\newcommand\ordle{\ensuremath{\leq_T}}
\newcommand\ordge{\ensuremath{\geq_T}}
\newcommand\siglt{\ensuremath{\prec}}
\newcommand\siggt{\ensuremath{\succ}}
\newcommand\polyring{\ensuremath{\mathcal{R}}}
\newcommand\algindent{\hspace{2em}}
\newcommand\lp{\ensuremath{\textit{r}}}
\newcommand\rules{\ensuremath{\textit{Rule}}}
\newcommand\ctr{\ensuremath{\textit{ctr}}}
\newcommand\bases{\ensuremath{\textit{G}}}
\newcommand\prevbasis{\ensuremath{\bases_{\mathrm{prev}}}}
\newcommand\currbasis{\ensuremath{\bases_{\mathrm{curr}}}}
\newcommand\curridx{\ensuremath{\mathit{curr\_idx}}}
\newcommand\critpairs{\ensuremath{P}}
\newcommand\spols{\ensuremath{S}}
\newcommand\reducedpols{\ensuremath{R}}
\newcommand\newpols{\ensuremath{S}}
\newcommand\completed{\ensuremath{\textit{completed}}}
\newcommand\todo{\ensuremath{\textit{to\_do}}}
\newcommand\newlycompleted{\ensuremath{\textit{newly\_completed}}}
\newcommand\redo{\ensuremath{\textit{redo}}}
\newcommand\reducedbasis{\ensuremath{B}}
\newcommand\poly{\ensuremath{\mathrm{Poly}}}
\newcommand\newlist[1]{\ensuremath{\mathrm{List}\left({#1}\right)}}
\newcommand\basisoriginal{\ensuremath{\mathrm{\textsc{Basis}}}}
\newcommand\basis{\ensuremath{\mathrm{\textsc{Basis/C}}}}
\newcommand\partialbasis{\ensuremath{\mathrm{\textsc{Incremental\_Basis}}}}
\newcommand\partialbasisc{\ensuremath{\mathrm{\textsc{Incremental\_Basis/C}}}}
\newcommand\critpair{\ensuremath{{\mathrm{\textsc{Critical\_Pair}}}}}
\newcommand\spol{\ensuremath{{\mathrm{\textsc{Compute\_SPols}}}}}
\newcommand\reduction{\ensuremath{{\mathrm{\textsc{Reduction}}}}}
\newcommand\sig{\ensuremath{\mathrm{Sig}}}
\newcommand\indexvar{\ensuremath{\nu}}
\newcommand\otherindexvar{\ensuremath{\nu'}}
\newcommand\multiplier{\ensuremath{\tau}}
\newcommand\othermultiplier{\ensuremath{\tau'}}
\newcommand\basisvar{\ensuremath{\mathbf{F}}}
\newcommand\sigvar[1]{\sigformat{\multiplier_{#1}}{\indexvar_{#1}}}
\newcommand\zerosig{\ensuremath{\mathbf{0}}}
\newcommand\sigformat[2]{\ensuremath{{#1}\basisvar_{#2}}}
\newcommand\rewritable{\ensuremath{{\mathrm{\textsc{Is\_Rewritable}}}}}
\newcommand\findrewriting{\ensuremath{{\mathrm{\textsc{Find\_Rewriting}}}}}
\newcommand\addrule{\ensuremath{{\mathrm{\textsc{Add\_Rule}}}}}
\newcommand\normalform{\ensuremath{\mathrm{Normal\_Form}}}
\newcommand\topreduction{\ensuremath{{\mathrm{\textsc{Top\_Reduction}}}}}
\newcommand\createreducedbasis{\ensuremath{\mathrm{\textsc{Setup\_Reduced\_Basis}}}}
\newcommand\findreductor{\ensuremath{{\mathrm{\textsc{Find\_Reductor}}}}}
\newcommand\closer{\ \scalebox{0.4}{\begin{picture}(0,0)
    \thicklines
    \put(0,10){\line(1,-1){10}}
    \put(0,10){\line(1,1){10}}
    \put(10,0){\line(1,1){10}}
    \put(10,20){\line(1,-1){10}}
    \put(15,0){\line(1,1){10}}
    \put(15,20){\line(1,-1){10}}
  \end{picture}}}
\newcommand\altreduction{\ensuremath{{\mathrm{\textsc{ Reduction}}}}}
\newcommand\safereducers{\ensuremath{\textit{safe}}}
\begin{document}

\title{F5C: a variant of Faug\`ere's F5 algorithm with reduced Gr\"obner
bases}
\begin{keyword}
F5, Buchberger's Criteria, Reduced Gr\"obner Bases
\end{keyword}

\cortext[corauth]{Corresponding author}

\author[ceder]{Christian Eder\corref{corauth}}

\address[ceder]{Universit\"at Kaiserslautern}

\ead{ederc@mathematik.uni-kl.de}

\author[jperry]{John Perry}

\address[jperry]{The University of Southern Mississippi}

\ead{john.perry@usm.edu}

\ead[jperry]{www.math.usm.edu/perry}
\begin{abstract}
Faug\`ere's F5 algorithm computes a Gr\"obner basis incrementally,
by computing a sequence of (non-reduced) Gr\"obner bases. The authors
describe a variant of F5, called F5C, that replaces each intermediate
Gr\"obner basis with its reduced Gr\"obner basis. As a result, F5C
considers fewer polynomials and performs substantially fewer polynomial
reductions, so that it terminates more quickly. We also provide a
generalization of Faug\`ere's characterization theorem for Gr\"obner
bases.
\end{abstract}
\maketitle

\section{Introduction}

Gr\"obner bases, first introduced in \citep{Buchberger65}, are by
now a fundamental tool of computational algebra, and Faug\`ere's
F5 algorithm is noted for its success at computing certain difficult
Gr\"obner bases \citep{Fau02Corrected,FrobeniusInF5,Fau05CStar}.
The algorithm's design is incremental: given a list of polynomials
$F=\left(f_{1},\ldots,f_{m}\right)$, F5 computes for each $i=2,\ldots,m$
a Gr\"obner basis $G_{i}$ of the ideal $\ideal{F_{i}}=\ideal{f_{1},\ldots,f_{i}}$
using a Gr\"obner basis $G_{i-1}$ of the ideal $\ideal{F_{i-1}}$.
The algorithm assigns each polynomial $p$ a ``signature'' determined
by how it computed $p$ from $F$; using the signature, F5 detects
a large number of zero reductions, and sometimes avoids these costly
computations altogether.

This paper considers the challenge of modifying F5 so that it replaces
$G_{i-1}$ with its \emph{reduced} Gr\"obner basis $\reducedbasis_{i-1}$
before proceeding to $\ideal{F_{i}}$. Working with the reduced Gr\"obner
basis is desirable because each stage of the pseudocode of \citep{Fau02Corrected}
usually generates many polynomials that are not needed for the Gr\"obner
basis property, and there is no interreduction between stages. In
one example, we show that a straightforward implementation of the
pseudocode of \citep{Fau02Corrected} on Katsura-9 concludes with
a Gr\"obner basis where nearly a third of the polynomials are unnecessary.

Stegers introduces a variant that uses $\reducedbasis_{i-1}$ to \emph{reduce}
newly computed generators of $\left\langle F_{i}\right\rangle $ \citep{Stegers2006}.
We call this variant F5R, for ``F5 Reducing by reduced Gr\"obner
bases.'' However, F5R still uses the unreduced basis $G_{i-1}$ to
compute critical pairs and new polynomials for $G_{i}$. As Stegers
points out, discarding $G_{i-1}$ in favor of $\reducedbasis_{i-1}$
is not a casual task, since the signatures of $G_{i-1}$ do not correspond
to the polynomials of $\reducedbasis_{i-1}$.

The solution we propose is to generate new signatures that correspond
to $B_{i-1}$, which generates the same ideal as $F_{i-1}$. With
this change, we can discard $G_{i-1}$ completely. The modified algorithm
generates fewer polynomials and performs fewer reduction operations.
Naturally, this means that the new variant consumes less CPU time,
as documented in two different implementations. Although it is a non-trivial
variant of F5, it respects its ancestor's elegant structure and modifies
only one subalgorithm. We call this variant F5C, for ``F5 Computing
by reduced Gr\"obner bases.''

After a review of preliminaries in Section~\ref{sec:Background},
we describe F5C in Section~\ref{sec:Modification}, and provide some
run-time data. A preliminary implementation in \textsc{Singular} is
complete~\citep{Singular3}, and we present comparative timings for
F5, F5R, and F5C. A proof of correctness appears in Section~\ref{sec: F5 correct},
and in Section~\ref{sub:Principal-syzygies} we show that one of
Faug\`ere's criteria is a a special case of a more general criterion.

The authors have made available a prototype implementation of F5,
F5R, and F5C as a \textsc{Singular} library \citep{Singular3,GreuelPfister2008}
at

\begin{center}
\texttt{http://www.math.usm.edu/perry/Research/f5\_library.lib}\textsc{
.}
\par\end{center}

\noindent A prototype implementation for the Sage computer algebra
system \citep{sage} developed by Martin Albrecht, with some assistance
from the authors, is available at

\begin{center}
\texttt{http://bitbucket.org/malb/algebraic\_attacks/src/tip/f5.py} .
\par\end{center}

\noindent This latter implementation can use F4-style reduction.

\section{\label{sec:Background}Background Material}

This section describes the fundamental notions and the conventions
in this paper. Our conventions differ somewhat from Faug\`ere's,
partly because the ones here make it relatively easy to describe and
implement the variant F5C.

Let $\mathbb{F}$ be a field and $\polyring=\mathbb{F}\left[x_{1},x_{2},\ldots,x_{n}\right]$.
Let $\ordlt$ denote a fixed admissible ordering on the monomials
$\mathbb{M}$ of $\polyring$. For every polynomial $p\in\polyring$
we denote the head monomial of $p$ with respect to $\ordlt$ by $\lt\left(p\right)$
and the head coefficient with respect to $\ordlt$ by $\lc\left(p\right)$.
(For us, a monomial has no coefficient.) Let $F=\left(f_{1},f_{2},\ldots,f_{m}\right)\in\polyring^{m}$.
The goal of F5 is to compute a \emph{Gr\"obner basis} of the ideal
$I=\ideal{F}$ with respect to $\ordlt$.

\subsection{\label{sub: Groebner bases}Gr\"obner bases}

A \emph{Gr\"obner basis} of $I$ with respect to $\ordlt$ is a finite
list $G$ of polynomials in $I$ that satisfies the properties $\ideal{G}=I$
and for every $p\in I$ there exists $g\in G$ satisfying $\lt\left(g\right)\mid\lt\left(p\right)$.
If, in addition, every $g\in G$ is monic and has no monomial that is divisible
by $\lt(h)$ for any $h\in G$, then $G$ is a \emph{reduced Gr\"obner basis}.
A reduced Gr\"obner basis exists for any ideal of $\polyring$, and Buchberger
first found an algorithm to compute such a basis~\citep{Buchberger65}.
We can describe Buchberger's algorithm in the following way: set $G=F$,
then iterate the following three steps.
\begin{itemize}
\item Choose a \emph{critical pair} $p,q\in G$ that has not yet been considered,
and construct its \emph{$S$-poly\-nomial}\[
S=\SPol\left(p,q\right)=\lc\left(q\right)\ltmul{p,q}\cdot p-\lc\left(p\right)\ltmul{q,p}\cdot q\]
where\[
\ltmul{p,q}=\frac{\lcm\left(\lt\left(p\right),\lt\left(q\right)\right)}{\lt\left(p\right)}\quad\mbox{and}\quad\ltmul{q,p}=\frac{\lcm\left(\lt\left(p\right),\lt\left(q\right)\right)}{\lt\left(q\right)}.\]
We call $p$ and $q$ the \emph{generators} of $S$ and $\ltmul{p,q}\cdot p$
and $\ltmul{q,p}\cdot q$ the \emph{components} of $S$.
\item \emph{Top-reduce} $S$ with respect to $G$. That is, $r_0=S$,
and while $t=\lt\left(r_i\right)$
remains divisible by $u=\lt\left(g\right)$ for some $g\in G$, put
$r_{i+1}:=r_i-\frac{\lc\left(r_i\right)}{\lc\left(g\right)}\frac{t}{u}\cdot g$.
\item If top-reduction of $S$ terminates after $j$ iterations,
no more top-reductions of $r_j$ are possible, so either $r_j=0$ or
$\lt\left(r_j\right)$ is no longer divisible by $\lt\left(g\right)$
for any $g\in G$.

\begin{itemize}
\item In the first case, we say that $\SPol\left(p,q\right)$\emph{ reduces
to zero with respect to} $G$.
\item In the second case, we say that $S$ top-reduces to $r_j$,
and append $r_j$ to $G$.
The new entry in $G$ means that $\SPol\left(p,q\right)$ now reduces
to zero with respect to $G$.
\end{itemize}
\end{itemize}
The algorithm terminates once the \emph{$S$}-polynomials of all pairs
$p,q\in G$ top-reduce to zero. That this occurs despite the introduction
of new critical pairs when $S$ does not reduce to zero is a well-known
consequence of the Ascending Chain Condition \citep{BWK93,CLO97}.

In this paper we consider several kinds of \emph{representations}
of a polynomial. Let $G$ and $\mathbf{h}$ be lists of $m$ elements
of $\polyring$, $p\in\ideal{G}$, and $t\in\mathbb{M}$. We say that
\begin{itemize}
\item $\mathbf{h}$ is a \emph{$G$-representation} of $p$ if $p=h_{1}g_{1}+\cdots+h_{m}g_{m}$;
\item $\mathbf{h}$ is a \emph{$t$-representation of $p$ with respect
to $G$} if $\mathbf{h}$ is a $G$-representation and for all $k=1,\ldots,m$
we have $h_{k}=0$ or $\lt\left(h_{k}g_{k}\right)\ordle t$; and
\item $\mathbf{h}$ is an \emph{$S$-re\-pre\-sen\-ta\-tion} of $S=\SPol\left(g_{i},g_{j}\right)$
\emph{with respect to $G$} if $\mathbf{h}$ is a $t$-re\-pre\-sen\-ta\-tion
of $S$ with respect to $G$ for some monomial $t\ordlt\lcm\left(\lt\left(g_{i}\right),\lt\left(g_{j}\right)\right)$.
\end{itemize}
We generally omit the phrase ``with respect to $G$'' when it is clear
from context.

If $p$ top-reduces to zero with respect to $G$, then it is easy
to derive an $\lt\left(p\right)$-repre\-sen\-tation of $p$, although
the converse is not always true. Correspondingly, if $p$ is an $S$-poly\-nomial
and $p$ top-\-reduces to zero with respect to $G$, then there exists
an $S$-re\-pre\-sen\-ta\-tion of $p$.

Theorem \ref{thm:equiv cond for GB} summarizes three important characterizations
of a Gr\"obner basis; (C) is from~\citet{Buchberger65}, while (D)
is from~\citet{Lazard83}. The proof, and many more characterizations
of a Gr\"obner basis, can be found in~\citep{BWK93}.
\begin{thm}
\label{thm:equiv cond for GB}Let $G$ be a finite list of polynomials
in $\polyring$, and $\ordlt$ an ordering on the monomials of $\polyring$.
The following are equivalent:
\begin{lyxlist}{(D)}
\item [{(A)}] $G$ is a Gr\"obner basis with respect to $\ordlt$.
\item [{(B)}] For all nonzero $p\in\ideal{G}$ there exists $g\in G$ such
that $\lt\left(g\right)\mid\lt\left(p\right)$.
\item [{(C)}] For all $p,q\in G$ $\SPol\left(p,q\right)$ top-reduces
to zero with respect to $G$.
\item [{(D)}] For all $p,q\in G$ $\SPol\left(p,q\right)$ has an $S$-representation
with respect to $G$.
\end{lyxlist}
\end{thm}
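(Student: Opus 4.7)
The plan is to prove the four conditions equivalent by the cycle (A) $\Leftrightarrow$ (B) $\Rightarrow$ (C) $\Rightarrow$ (D) $\Rightarrow$ (A). The first three implications are short; the last is the classical Buchberger criterion and will need the most work.

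First, (A) $\Leftrightarrow$ (B) is essentially immediate from the definition: (B) is just the divisibility clause in the definition of a Gr\"obner basis restricted to nonzero elements, while $\ideal{G}=I$ holds because $G\subseteq I$. For (B) $\Rightarrow$ (C), I take any $p,q\in G$ and apply (B) iteratively to $\SPol(p,q)$: at each stage the remainder is either $0$ or lies in $\ideal{G}$ with strictly smaller head monomial, so termination follows from well-foundedness of $\ordlt$. For (C) $\Rightarrow$ (D), I track the top-reduction chain of $S=\SPol(p,q)$: each step rewrites $S$ as $S-\frac{\lc(S)}{\lc(g)}\frac{\lt(S)}{\lt(g)}g$, so concatenating all reduction steps produces a $G$-representation of $S$ in which every summand has head monomial $\ordle \lt(S)$. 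Since the components of $S$ are built precisely so that the $\lcm(\lt(p),\lt(q))$-terms cancel, $\lt(S)\ordlt\lcm(\lt(p),\lt(q))$, so this is in fact an $S$-representation.

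The real content is (D) $\Rightarrow$ (A), for which I would use the standard minimal-representation argument. Suppose (D) holds. Given nonzero $f\in\ideal{G}$, pick any $G$-representation $f=\sum h_k g_k$ and let $t=\max_k\lt(h_k g_k)$, choosing the representation so that $t$ is $\ordlt$-minimal. I claim $t=\lt(f)$, in which case some $\lt(g_k)$ divides $\lt(f)$ and (B) follows (which gives (A) by what is shown above). If instead $\lt(f)\ordlt t$, the leading $t$-terms must cancel, so at least two indices $i,j$ satisfy $\lt(h_i g_i)=\lt(h_j g_j)=t$.

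The key manoeuvre is then to rewrite the offending pair of summands using an $S$-representation. Write $t = \multiplier \cdot \lcm(\lt(g_i),\lt(g_j))$ for some monomial $\multiplier$, and observe that a suitable scalar combination of the terms $h_i g_i$ and $h_j g_j$ equals $c\,\multiplier\cdot\SPol(g_i,g_j)$ plus a single summand whose head monomial is still of the form $h'_k g_k$ with $\lt(h'_k g_k)\ordle t$. By (D), $\SPol(g_i,g_j)=\sum a_\ell g_\ell$ with every $\lt(a_\ell g_\ell)$ strictly less than $\lcm(\lt(g_i),\lt(g_j))$, so multiplying through by $\multiplier$ yields summands with head monomial strictly below $t$. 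Substituting this back into $f=\sum h_k g_k$ produces a new $G$-representation with either strictly fewer indices attaining the maximum $t$, or with a strictly smaller maximum. Iterating (using well-foundedness of $\ordlt$ and finiteness of $G$) contradicts the minimality of $t$. The main obstacle here is purely bookkeeping: managing the coefficients so the substitution is clean, and confirming that the induction on $(t, \#\{k : \lt(h_k g_k)=t\})$ in the lexicographic order terminates. Everything else is direct.
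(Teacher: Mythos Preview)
Your argument is correct and is the standard textbook proof of Buchberger's criterion. The paper itself does not actually prove this theorem: it simply states the result, attributes (C) to Buchberger and (D) to Lazard, and refers the reader to \citep{BWK93} for the details, so there is nothing to compare against beyond noting that your (D)$\Rightarrow$(A) argument is exactly the minimal-$t$ rewriting argument one finds there. One minor wording quibble: the remark that ``$\ideal{G}=I$ holds because $G\subseteq I$'' is slightly off, since in this statement the ideal under consideration \emph{is} $\ideal{G}$ and there is nothing to verify --- but this does not affect correctness.
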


\subsection{\label{sec: F5}The F5 Algorithm}

In this section we give a brief overview of F5 (Algorithms~\ref{alg:F5}--\ref{alg:Find_Rewriting}).
To make the presentation of F5R and F5C easier, we have made some
minor modifications to the pseudocode of \citet{Fau02Corrected,Stegers2006},
but they are essentially equivalent.

The F5 algorithm \citep{Fau02Corrected} consists of several subalgorithms.
\begin{algorithm}
\caption{\label{alg:F5}$\basisoriginal$}

\begin{algorithmic}[1]

\GLOBALS$\lp$, $\rules$, $\ordlt$

\INPUTS

\STATE$F=\left(f_{1},f_{2},\ldots,f_{m}\right)\in\polyring^{m}$
(homogeneous)

\STATE$<$, an admissible ordering

\ENDINPUTS

\OUTPUTS

\STATE a Gr\"obner basis of $F$ with respect to $<$

\ENDOUTPUTS

\BODY

\STATE$\ordlt:=<$

\STATE Sort $F$ by increasing total degree, breaking ties by increasing
head monomial

\COMMENT{Initialize the record keeping.}

\STATE $\rules:=\newlist{\newlist{}}$

\STATE $\lp:=\newlist{}$

\COMMENT{Compute the basis of $\ideal{f_{1}}$.}

\STATE Append $\left(\sigformat{}{1},f_{1}\cdot\lc\left(f_{1}\right)^{-1}\right)$
to $\lp$

\STATE $\prevbasis=\left\{ 1\right\} $

\STATE $\reducedbasis=\left\{ f_{1}\right\} $

\COMMENT{Compute the bases of $\ideal{f_{1},f_{2}}$, \ldots{},
$\ideal{f_{1},f_{2},\ldots,f_{m}}$.}

\STATE $i:=2$

\WHILE{ $i\leq m$}

\STATE Append $\left(\basisvar_{i},f_{i}\cdot\lc\left(f_{i}\right)^{-1}\right)$
to $\lp$

\STATE $\currbasis:=\partialbasis\left(i,\reducedbasis,\prevbasis\right)$

\IF{$\exists\lambda\in\currbasis$ such that $\poly\left(\lambda\right)=1$}

\RETURN $\left\{ 1\right\} $

\ENDIF

\STATE $\prevbasis:=\currbasis$

\STATE $\reducedbasis:=\left\{ \poly\left(\lambda\right):\;\lambda\in\prevbasis\right\} $\label{line: definition of B}

\STATE $i:=i+1$

\ENDWHILE

\RETURN \textbf{$\reducedbasis$}

\ENDBODY

\end{algorithmic}
\end{algorithm}

\begin{itemize}
\item The entry point is the  $\basisoriginal$. It expects as input a list
of homogeneous polynomials of $\polyring$. $\basisoriginal$ invokes
 $\partialbasis$ to construct Gr\"obner bases of the ideals $\ideal{F_{2}}$,
$\ideal{F_{3}}$, \ldots{}, $\ideal{F_{m}}$, in succession. (Computing
the Gr\"obner basis of $\ideal{F_{1}}$ is trivial.) Polynomials
are stored in a data structure $\lp$, whose details we consider in
Section~\ref{sub:Signatures-and-labeled}. The sets $\currbasis,\prevbasis\subset\mathbb{N}$
index elements of $r$ that correspond to the generators of $\ideal{F_{i}}$
and a Gr\"obner basis of $\ideal{F_{i-1}}$, respectively.%
\begin{algorithm}
\caption{\label{alg:Partial_Basis}$\partialbasis$}

\begin{algorithmic}[1]

\GLOBALS $\lp$, $\ordlt$

\INPUTS

\STATE $i\in\mathbb{N}$

\STATE $\reducedbasis$, a Gr\"obner basis of $\left(f_{1},f_{2},\ldots,f_{i-1}\right)$
with respect to $\ordlt$

\STATE $\prevbasis\subset\mathbb{N}$, indices in $\lp$ of $\reducedbasis$

\ENDINPUTS

\OUTPUTS

\STATE $\currbasis$, indices in $\lp$ of a Gr\"obner basis of
$\left(f_{1},f_{2},\ldots,f_{i}\right)$ with respect to $\ordlt$

\ENDOUTPUTS

\BODY

\STATE $\curridx:=\#\lp$

\STATE $\currbasis:=\prevbasis\cup\left\{ \curridx\right\} $

\STATE Append $\newlist{}$ to $\rules$

\STATE $\critpairs:=\bigcup_{j\in\prevbasis}\critpair\left(\curridx,j,i,\prevbasis\right)$

\WHILE{$\critpairs\neq\emptyset$}

\STATE $d:=\min\left\{ \deg t:\left(t,k,u,\ell,v\right)\in\critpairs\right\} $
\COMMENT{See Algorithm~\ref{alg:Critical_Pair} for structure of $p\in\critpairs$}

\STATE $\critpairs_{d}:=\left\{ \left(t,k,u,\ell,v\right)\in\critpairs:d=\deg t\right\} $

\STATE $\critpairs:=\critpairs\backslash\critpairs_{d}$

\STATE $\spols:=\spol\left(\critpairs_{d}\right)$

\STATE $\reducedpols:=\reduction\left(\spols,\reducedbasis,\prevbasis,\currbasis\right)$

\FOR{$k\in\reducedpols$}

\STATE $\critpairs:=\critpairs\cup\left(\bigcup_{j\in\currbasis}\critpair\left(k,j,i,\prevbasis\right)\right)$\label{line:partialbasis generates new critical pairs}

\STATE $\currbasis:=\currbasis\cup\left\{ k\right\} $

\ENDFOR

\ENDWHILE

\RETURN{$\currbasis$}

\ENDBODY

\end{algorithmic}
\end{algorithm}

\item The goal of  $\partialbasis$ is to compute a Gr\"obner basis of
$\ideal{F_{i}}$ by computing $d$-Gr\"obner bases for $d=1,2,\ldots$.
(A $d$-Gr\"obner basis is one for which all $S$-poly\-nomials
of homogeneous degree at most $d$ reduce to zero; see~\citep{BWK93}.)
$\partialbasis$ iterates the following steps, which follow the general
outline of Buchberger's Algorithm:

\begin{itemize}
\item Generate a list of critical pairs by iterating  $\critpair$ on all
of the pairs of $\left\{ \curridx\right\} \times\prevbasis$. (In
our implementation, $\curridx$ is the location in $\lp$ where $f_{i}$
is stored.)%
\begin{algorithm}
\caption{\label{alg:Critical_Pair}$\critpair$}

\begin{algorithmic}[1]

\GLOBALS $\ordlt$

\INPUTS

\STATE $k,\ell\in\mathbb{N}$ such that $1\leq k<\ell\leq\#\lp$

\STATE $i\in\mathbb{N}$

\STATE $\prevbasis\subset\mathbb{N}$, indices in $\lp$ of a Gr\"obner
basis of $\left(f_{1},f_{2},\ldots,f_{i-1}\right)$ w/respect to $\ordlt$

\ENDINPUTS

\OUTPUTS

\STATE $\left\{ \left(t,u,k,v,\ell\right)\right\} $, corresponding
to a critical pair $\left\{ k,l\right\} $ necessary for

\STATE \algindent the computation of a Gr\"obner basis of $\left(f_{1},f_{2},\ldots,f_{i}\right)$;
$\emptyset$ otherwise

\ENDOUTPUTS

\BODY

\STATE $t_{k}:=\lt\left(\poly\left(k\right)\right)$

\STATE $t_{\ell}:=\lt\left(\poly\left(\ell\right)\right)$

\STATE $t:=\lcm\left(t_{k},t_{\ell}\right)$

\STATE $u_{1}:=t/t_{k}$

\STATE $u_{2}:=t/t_{\ell}$

\STATE $\sigvar{1}:=\sig\left(k\right)$

\STATE $\sigvar{2}:=\sig\left(\ell\right)$

\IF{$\indexvar_{1}=i$ \BAND\  $u_{1}\cdot\multiplier_{1}$ is top-reducible
by $\prevbasis$\label{line: critpair, PS-reducible 1}}

\RETURN{$\emptyset$}

\ENDIF

\IF{$\indexvar_{2}=i$ \BAND\  $u_{2}\cdot\multiplier_{2}$ is top-reducible
by $\prevbasis$\label{line: critpair, PS-reducible 2}}

\RETURN{$\emptyset$}

\ENDIF

\IF{$u_{1}\cdot\sig\left(k\right)\siglt u_{2}\cdot\sig\left(\ell\right)$}

\STATE Swap $u_{1}$ and $u_{2}$

\STATE Swap $k$ and $\ell$

\ENDIF

\RETURN{$\left\{ \left(t,k,u_{1},\ell,u_{2}\right)\right\} $}

\ENDBODY

\end{algorithmic}
\end{algorithm}

\item Identify the critical pairs of smallest degree, and compute the necessary
$S$-poly\-nomials of smallest degree using  $\spol$.%
\begin{algorithm}
\caption{\label{alg:SPols}$\spol$}

\begin{algorithmic}[1]

\GLOBALS $\lp$, $\ordlt$

\INPUTS

\STATE $\critpairs$, a set of critical pairs in the form $\left(t,k,u,\ell,v\right)$

\ENDINPUTS

\OUTPUTS

\STATE $\spols$, a list of indices in $\lp$ of $S$-poly\-nomials
computed

\STATE for a Gr\"obner basis of $\left(f_{1},f_{2},\ldots,f_{i}\right)$

\ENDOUTPUTS

\BODY

\STATE $\newpols:=\left(\right)$

\FOR{$\left(t,k,u,\ell,v\right)\in\critpairs$, from smallest to
largest lcm}

\IF{\NOT\ $\rewritable\left(u,k\right)$ \BAND\ \NOT\  $\rewritable\left(v,\ell\right)$\label{line: spol rewritiable?}}

\STATE Compute $s$, the $S$-polynomial of $\poly\left(k\right)$
and $\poly\left(\ell\right)$

\STATE \label{line: create new poly from S-poly}Append $\left(u\cdot\sig\left(k\right),s\right)$
to $\lp$

\STATE $\addrule\left(u\cdot\sig\left(k\right),\#\lp\right)$

\IF{$s\neq0$}\label{line: if nonzero}

\STATE Append $\#\lp$ to $\newpols$\label{line: if nonzero add}

\ENDIF

\ENDIF

\ENDFOR

\STATE Sort $\newpols$ by increasing signature

\RETURN $\newpols$

\ENDBODY

\end{algorithmic}
\end{algorithm}

\item Top-reduce by passing the output $\spols$ of $\spol$ to  $\reduction$.
\item The output $\reducedpols$ of $\reduction$ indexes those polynomials
that did not reduce to zero; new critical pairs are generated by iterating
$\critpair$ on all pairs $\left(k,j\right)\in\reducedpols\times\currbasis$,
and $\reducedpols$ is appended to $\currbasis$.
\end{itemize}
\end{itemize}
\noindent We higlight the major differences between these subalgorithms
and their counterparts in Buchberger's algorithm:
\begin{itemize}
\item $\critpair$ discards any pair whose corresponding $S$-polynomial
has a component that satisfies the ``new criterion'' of \citep{Fau02Corrected},
described in Section~\ref{sub:Principal-syzygies}.
\item $\spol$ disregards any $S$-polynomial with a ``rewritable'' component,
as described in Section~\ref{sub:Rewritable-polynomials}.
\item $\reduction$ iterates over the most recently computed $S$-poly\-nomials,
from lowest signature to highest. For each $k$ in its input, it:%
\begin{algorithm}
\caption{\label{alg:Reduction}$\reduction$}

\begin{algorithmic}[1]

\GLOBALS $\lp$, $\ordlt$

\INPUTS

\STATE $\newpols$, a list of indices of polynomials added to the
generators $\bases_{i}$

\STATE $\reducedbasis$, a Gr\"obner basis of $\left(f_{1},f_{2},\ldots,f_{i-1}\right)$
with respect to $\ordlt$

\STATE $\prevbasis\subset\mathbb{N}$, indices in $\lp$ corresponding
to $\reducedbasis$

\STATE $\currbasis\subset\mathbb{N}$, indices in $\lp$ of a list
of generators of the ideal of $\left(f_{1},f_{2},\ldots,f_{i}\right)$

\ENDINPUTS

\OUTPUTS

\STATE $\completed$, a subset of $\bases$ corresponding to (mostly)
top-reduced polynomials

\ENDOUTPUTS

\BODY

\STATE $\todo:=\newpols$

\STATE $\completed:=\emptyset$

\WHILE{$\todo\neq()$}

\STATE Let $k$ be the element of $\todo$ such that $\sig\left(k\right)$
is minimal.

\STATE $\todo:=\todo\backslash\left\{ k\right\} $

\STATE $h:=\normalform\left(\poly\left(k\right),\reducedbasis,\ordlt\right)$\label{line: normal form}

\STATE $\lp_{k}:=\left(\sig\left(k\right),h\right)$

\STATE $\newlycompleted,\redo:=\topreduction\left(k,\prevbasis,\currbasis\cup\completed\right)$

\STATE $\completed:=\completed\cup\newlycompleted$

\FOR{$j\in\redo$\label{line: for to add new polys}}

\STATE Insert $j$ in $\todo$ , sorting by increasing signature\label{line: add new polys}

\ENDFOR

\ENDWHILE

\RETURN{$\completed$}

\ENDBODY

\end{algorithmic}
\end{algorithm}
\begin{algorithm}
\caption{\label{alg:Top_Reduction}$\topreduction$}

\begin{algorithmic}[1]

\GLOBALS $\lp$, $\ordlt$

\INPUTS

\STATE $k$, the index of a labeled polynomial

\STATE $\prevbasis\subset\mathbb{N}$, indices in $\lp$ of a Gr\"obner
basis of $\left(f_{1},f_{2},\ldots,f_{i-1}\right)$ w/respect to $\ordlt$

\STATE $\currbasis\subset\mathbb{N}$, indices in $\lp$ of a list
of generators of the ideal of $\left(f_{1},f_{2},\ldots,f_{i}\right)$

\ENDINPUTS

\OUTPUTS

\STATE $\completed$, which has value $\left\{ k\right\} $ if $\lp_{k}$
was \textbf{not} top-reduced and $\emptyset$ otherwise

\STATE $\todo$, which has value

\STATE \algindent $\emptyset$ if $\lp_{k}$ was not top-reduced,

\STATE \algindent $\left\{ k\right\} $ if $\lp_{k}$ is replaced
by its top-reduction, and

\STATE \algindent $\left\{ k,\#\lp\right\} $ if top-reduction of
$\lp_{k}$ generates a polynomial with a signature larger than $\sig\left(k\right)$.

\ENDOUTPUTS

\BODY

\IF{$\poly\left(k\right)=0$}\label{line: if poly(k) =00003D 0}

\PRINT ``Reduction to zero!''

\RETURN $\emptyset,\emptyset$\label{line: zero reduction, return 0,0}

\ENDIF

\STATE $p:=\poly\left(k\right)$

\STATE $J:=\findreductor\left(k,\prevbasis,\currbasis\right)$

\IF{$J=\emptyset$}

\STATE $\lp_{k}:=\left(\sig\left(k\right),p\cdot\left(\lc\left(p\right)\right)^{-1}\right)$

\RETURN $\left\{ k\right\} ,\emptyset$

\ENDIF

\STATE Let $j$ be the single element in $J$

\STATE $q:=\poly\left(j\right)$

\STATE $u:=\frac{\lt\left(p\right)}{\lt\left(q\right)}$

\STATE $c:=\lc\left(p\right)\cdot\left(\lc\left(q\right)\right)^{-1}$

\STATE $p:=p-c\cdot u\cdot q$

\IF{$p\neq0$}

\STATE $p:=p\cdot\left(\lc\left(p\right)\right)^{-1}$

\ENDIF

\IF{$u\cdot\sig\left(j\right)\siglt\sig\left(k\right)$}

\STATE $\lp_{k}:=\left(\sig\left(k\right),p\right)$

\RETURN $\emptyset,\left\{ k\right\} $

\ELSE

\STATE Append $\left(u\cdot\sig\left(j\right),p\right)$ to $\lp$\label{line: append new poly (top-red)}

\STATE $\addrule\left(u\cdot\sig\left(j\right),\#\lp\right)$\label{line: add new rule (top-red)}

\RETURN $\emptyset,\left\{ k,\#\lp\right\} $

\ENDIF

\ENDBODY

\end{algorithmic}
\end{algorithm}
\begin{algorithm}
\caption{\label{alg:Find_Reductor}$\findreductor$}

\begin{algorithmic}[1]

\GLOBALS $\ordlt$

\INPUTS 

\STATE $k$, the index of a labeled polynomial

\STATE $\prevbasis\subset\mathbb{N}$, indices in $\lp$ of a Gr\"obner
basis with respect to $\ordlt$ of $\left(f_{1},f_{2},\ldots,f_{i-1}\right)$

\STATE $\currbasis\subset\mathbb{N}$, indices in $\lp$ of a list
of generators of the ideal of $\left(f_{1},f_{2},\ldots,f_{i}\right)$

\ENDINPUTS

\OUTPUTS

\STATE $J$, where $J=\left\{ j\right\} $ if $j\in\currbasis$ and
$\poly\left(k\right)$ is \emph{safely} top-reducible by $\poly\left(j\right)$;

\STATE \algindent otherwise $J=\emptyset$

\ENDOUTPUTS

\BODY

\STATE $t:=\lt\left(\poly\left(k\right)\right)$

\FOR{$j\in\currbasis$}

\STATE $t'=\lt\left(\poly\left(j\right)\right)$

\IF{$t'\mid t$}

\STATE $u:=t/t'$

\STATE $\sigvar{j}:=\sig\left(j\right)$

\IF{$u\cdot\sig\left(j\right)\neq\sig\left(k\right)$ \BAND\ \NOT\ \label{line: FindReductor test}$\rewritable\left(u,j\right)$
\BAND\ $u\cdot\multiplier_{j}$ is not top-reducible by $\prevbasis$}

\RETURN $\left\{ j\right\} $

\ENDIF

\ENDIF

\ENDFOR

\RETURN$\emptyset$

\ENDBODY

\end{algorithmic}
\end{algorithm}

\begin{itemize}
\item Performs a complete (normal form) reduction of $\poly\left(k\right)$
by the previous Gr\"obner basis.
\item Invokes $\topreduction$, which top-reduces $\poly\left(k\right)$
by the current set of generators, subject to the following restrictions.

\begin{itemize}
\item $\topreduction$ invokes $\findreductor$ to find top-reductions.
If it finds one, $\topreduction$ may act in two different ways, depending
on the signature of the top-reduction. If the signature is ``safe'',
which means ``signature-preserving'', as discussed at the end of Section~\ref{sub: props of sigs},
then an ordinary top-reduction takes place. If the signature is ``unsafe'',
then $\topreduction$ acts as if it is computing an $S$-poly\-nomial,
and thus generates a new polynomial with the new (higher) signature.
\item Some top-reductions by the current basis are forbidden by Line~\ref{line: FindReductor test}
of $\findreductor$. The practical result is that some polynomials
in the basis may not be fully top-reduced. These correspond to forbidden
$S$-poly\-nomials; compare with lines~\ref{line: critpair, PS-reducible 1}
and~\ref{line: critpair, PS-reducible 2} of $\critpair$ and line~\ref{line: spol rewritiable?}
of $\spol$.
\end{itemize}
\end{itemize}
\end{itemize}
The remaining subalgorithms record and analyze information used by
$\critpair$ and $\spol$ to discard useless pairs:%
\begin{algorithm}
\caption{\label{alg:Add_Rule}$\addrule$}

\begin{algorithmic}[1]

\GLOBALS $\lp$, $\rules$

\INPUTS

\STATE $\sigformat{\multiplier}{\indexvar}$, the signature of $\lp_{k}$

\STATE $k$, the index of a labeled polynomial in $\lp$ (or 0, for
a phantom labeled polynomial)

\ENDINPUTS

\BODY

\STATE Append $\left(\multiplier,k\right)$ to $\rules_{\indexvar}$

\RETURN{}

\ENDBODY

\end{algorithmic}
\end{algorithm}

\begin{itemize}
\item $\addrule$ is invoked whenever $\spol$ or $\altreduction$ generates
a new polynomial, and records information about that polynomial.
\item $\rewritable$ and $\findrewriting$ determine when an $S$-poly\-nomial
is re\-writ\-able.
\end{itemize}

\subsection{\label{sub:Signatures-and-labeled}Signatures and Labeled Polynomials
in F5}

The first major difference between F5 and traditional algorithms to
compute a Gr\"obner basis is the additional record keeping of ``signatures''.%
\begin{algorithm}
\caption{\label{alg:Is_Rewritable}$\rewritable$}

\begin{algorithmic}[1]

\INPUTS

\STATE $u$, a power product

\STATE $k$, the index of a labeled polynomial in $\lp$

\ENDINPUTS

\OUTPUTS

\STATE \TRUE\  if $u\cdot\sig\left(k\right)$ is rewritable (see
$\findrewriting$)

\ENDOUTPUTS

\BODY

\STATE $j:=\findrewriting\left(u,k\right)$

\RETURN $j\neq k$

\ENDBODY 

\end{algorithmic}
\end{algorithm}
\begin{algorithm}
\caption{\label{alg:Find_Rewriting}$\findrewriting$}

\begin{algorithmic}[1]

\GLOBALS $\rules$

\INPUTS 

\STATE $u$, a power product

\STATE $k$, the index of a labeled polynomial in $\lp$

\ENDINPUTS

\OUTPUTS

\STATE $j$, the index of a labeled polynomial in $\lp$ such that
if $\sigvar{j}=\sig\left(j\right)$\\
\algindent and $\sigvar{j}=\sig\left(k\right)$, then $\indexvar_{j}=\indexvar_{k}$
and $\multiplier_{j}\mid u\cdot\multiplier_{k}$\\
\algindent and $\lp_{j}$ was added to $\rules_{\nu_{k}}$ most
recently.

\ENDOUTPUTS

\BODY

\STATE $\sigformat{\multiplier_{k}}{\indexvar}:=\sig\left(k\right)$

\STATE $\ctr:=\#\rules_{\indexvar}$

\WHILE{$\ctr>0$}

\STATE $\left(\multiplier_{j},j\right):=\rules_{\indexvar,\ctr}$

\IF{$\multiplier_{j}\mid u\cdot\multiplier_{k}$}

\RETURN{$j$}

\ENDIF

\STATE $\ctr:=\ctr-1$

\ENDWHILE

\RETURN{$k$}

\ENDBODY

\end{algorithmic}
\end{algorithm}

\begin{defn}
\label{def:admissible}\label{def:signature}Let $M\in\mathbb{N}$,
$G=\left(g_{1},\ldots,g_{M}\right)\in\polyring^{M}$, and $p\in\polyring$.
We say that $\left(\multiplier,\indexvar\right)\in\mathbb{M}\times\mathbb{N}$
is a \emph{signature of $p$ with respect to} $G$
if $p$ has an $G$-re\-pre\-sen\-ta\-tion $\mathbf{h}$ such that
\begin{itemize}
\item $h_{\indexvar+1}=h_{\indexvar+2}=\cdots=h_{M}=0$; and
\item $h_\indexvar\neq 0$ and $\multiplier=\lt\left(h_{\indexvar}\right)$.
\end{itemize}
\noindent We omit the phrase ``with respect to $G$'' when it is clear
from context, and let $\multiplier\basisvar_{\indexvar}$ be a shorthand
for $\left(\multiplier,\indexvar\right)$. We also say that $\mathbf{h}$
is \emph{a}\textbf{ }\emph{$G$-representation of $p$ corresponding
to $\multiplier\basisvar_{\indexvar}$}. We call $\indexvar$ the
\emph{index}.

We also define the \emph{zero signature} $\zerosig$ of the zero polynomial
$0g_{1}+0g_{2}+\cdots+0g_{M}$.

The \emph{labeled polynomial} $\lp_{k}=\left(\sig\left(k\right),\poly\left(k\right)\right)$
is \emph{admissible with respect to $G$} if $\sig\left(k\right)$
is a signature of $\poly\left(k\right)$ with respect to $G$. Again,
we omit the phrase ``with respect to $G$'' when it is clear from
context.\end{defn}
\begin{rem*}
Our definitions of a signature differ from Faug\`ere's in several
respects:
\begin{itemize}
\item The first is minor: we use $f_{\indexvar+1}=\cdots=f_{m}=0$ whereas
Faug\`ere uses $f_{1}=\cdots=f_{\indexvar-1}=0$. The present version
simplifies considerably the description and implementation of F5C.
\item Faug\`ere uses $\left(\basisvar_{1},\ldots,\basisvar_{m}\right)$
as the basis for the $\polyring$-module $\polyring^{m}$ where $m$
is fixed; in F5C $m$ usually increases.
\item Faug\`ere's definition admits only one unique signature per polynomial,
determined by a minimality criterion. Our version allows a polynomial
to have many signatures; we refer to Faug\`ere's signature as the
\emph{minimal} signature of a polynomial. The change is motivated
by a desire to reflect the algorithm's behavior; for many inputs,
F5 does not always assign the minimal signature to a polynomial.
\item We introduce a zero signature.
\end{itemize}
\end{rem*}
The algorithm's behavior depends crucially on the assumption that
all the elements of $\lp$ are admissible. We show that the algorithm
satisfies this property in Proposition~\ref{pro:props of sigs}.
\begin{example}
\label{exa:signatures}Suppose that $F=\left(xy+x,y^{2}-1\right)$.
Then $\left(\basisvar_{1},f_{1}\right)$ is admissible with respect
to $F$. So is $\left(x\basisvar_{2},f_{1}\right)$, since $f_{1}=yf_{1}-xf_{2}$.\closer
\end{example}
It will be convenient at times to multiply monomials to signatures;
thus for any monomial $u$ and any $k\in\left\{ 1,\ldots,\#\lp\right\} $
we write the \emph{product of $u$ and $\sig\left(k\right)=\multiplier\basisvar_{\indexvar}$}
as\[
u\sig\left(k\right)=u\cdot\multiplier\basisvar_{\indexvar}=\left(u\multiplier\right)\basisvar_{\indexvar}.\]
If $\multiplier\basisvar_{\indexvar}$ is a signature of a polynomial
$p$, then the product of $u$ and $\multiplier\basisvar_{\indexvar}$
is a signature of $up$. For more properties of signatures, see Proposition~\ref{pro:props of sigs}
in Section~\ref{sub: props of sigs}.

We now generalize the ordering $\ordlt$ to an ordering on signatures.
\begin{defn}
Let $\mathcal{S}$ be the set of all possible signatures with respect
to $F$. Define a relation $\siglt$ on $\mathcal{S}$ in the following
way: for all monomials $\multiplier,\othermultiplier\in\mathbb{M}$
\begin{itemize}
\item $\zerosig$ is smaller than any other signature, and
\item for all $i,j\in\mathbb{N}$ $\othermultiplier\basisvar_{i}\siglt\multiplier\basisvar_{j}$
iff

\begin{itemize}
\item $i<j$, or
\item $i=j$ and $\othermultiplier\ordlt\multiplier$.
\end{itemize}
\end{itemize}
\end{defn}
\noindent It is clear that $\siglt$ is a well-ordering on $\mathcal{S}$,
which implies that every polynomial has a minimal signature.
\begin{example}
\label{exa:minimal signature}In Example~\ref{exa:signatures}, $\basisvar_{1}$
is the minimal signature of $f_{1}$ with respect to $F$.\closer
\end{example}

\section{\label{sec:Modification}F5C: F5 Computing with reduced Gr\"obner
bases}

It turns out that F5 often generates many ``redundant'' polynomials.
For the purposes of this discussion, a \emph{redundant polynomial
in a Gr\"obner basis} $\reducedbasis$ is a polynomial $p\in\reducedbasis$
whose head monomial is divisible by the head monomial of some $q\in\reducedbasis\backslash\left\{ p\right\} $.
It is obvious from (B) of Theorem~\ref{thm:equiv cond for GB} that
$p$ is unnecessary for the Gr\"obner basis property, and can be
discarded. In the Example given in \citep{Fau02Corrected} $r_{10}$,
which has head monomial $y^{6}t^{2}$, is a redundant polynomial because
of $r_{8}$, which has head monomial $y^{5}t^{2}$.

Why does this happen? A glance at line~\ref{line: FindReductor test}
of $\findreductor$ reveals that some top-reductions are forbidden!
Thus, despite the fact that it is often much, much faster than other
algorithms, F5 still generates many redundant polynomials. Paradoxically,
we cannot discard such polynomials safely before the algorithm has
computed a Gr\"obner basis, because the unnecessary polynomials are
marked with signatures that are necessary for the algorithm's stability
and correctness.

\subsection{\label{sub: F5C}Introducing F5C}

Stegers introduces a limited use of reduced Gr\"obner bases to F5:
variant F5R top-reduces by the polynomials of a reduced basis, but
continues to compute critical pairs and $S$-poly\-nomials with the
polynomials of the unreduced basis. One can implement this relatively
easily by changing line~\ref{line: definition of B} of  $\basisoriginal$
to

\begin{center}
\ref{line: definition of B}\algindent Let $\reducedbasis$ be the
interreduction of $\left\{ \poly\left(\lambda\right):\;\lambda\in\prevbasis\right\} $
\par\end{center}

\noindent When we say ``interreduction'', we also mean to multiply
so that the head coefficient is unity; thus $B$ is the unique reduced
Gr\"obner basis of $\ideal{F_{i}}$. Subsequently,  $\reduction$
will reduce $\poly\left(k\right)$ completely by the interreduced
$\reducedbasis$; this does not affect the algorithm's correctness
because the signature of every polynomial in $\ideal{\reducedbasis}$
is smaller than the signature of any polynomial generated with $f_{i}$.

Why does F5R only top-reduce by the reduced basis,
but not compute critical pairs and $S$-polynomials using
the reduced basis? The algorithm needs signatures and polynomials
to correspond, but the signatures of the polynomials of $\reducedbasis$
\emph{are unknown}. Merely replacing the polynomials indexed by $\prevbasis$
to those of $\reducedbasis$ would render most polynomials inadmissible.
The rewritings stored in $\rules$ would no longer correspond to the
signatures of $S$-poly\-nomials, so $\rewritable$ would reject
some $S$-poly\-nomials wrongly, and would fail to reject some $S$-poly\-nomials
when it should.

Can we get around this? In fact, we can: modify the lists $\lp$ and
$\rules$ so that the polynomials of $\reducedbasis$ are\emph{ }admissible,
and the rewrite rules valid, with respect to $\ideal{\reducedbasis}=\ideal{F_{i}}$.
Suppose that  $\partialbasis$ has terminated with value $\prevbasis$
in $\basisoriginal$. As in F5R, modify Line~\ref{line: definition of B}
of $\basisoriginal$ to interreduce $\left\{ \poly\left(\lambda\right):\;\lambda\in\prevbasis\right\} $
and obtain the reduced Gr\"obner basis $\reducedbasis$. The next
stage of the algorithm requires the computation of a Gr\"obner basis
of $\ideal{F_{i+1}}$. Certainly $\ideal{F_{i+1}}=\ideal{\reducedbasis\cup\left\{ f_{i+1}\right\} }$.
Reset $\lp$ and $\rules$, then create new lists to reflect the signatures
and rewritings for the corresponding \emph{$\reducedbasis$-}re\-pre\-sen\-ta\-tion:
\begin{itemize}
\item $\lp:=\left(\left(\basisvar_{j},\reducedbasis_{j}\right)\right)_{j=1}^{\#\reducedbasis}$;
and
\item for each $j=2,\ldots,\#\reducedbasis$ and for each $k=1,\ldots,j-1$
set $\rules_{j}:=\left(\ltmul{p,q},0\right)_{k=1}^{j-1}$ where $p=\reducedbasis_{j}$
and $q=\reducedbasis_{k}$.
\end{itemize}
The first statement assigns signatures appropriate for the module
$\polyring^{\#F'}$; the second re-creates the list of rewritings
to reflect that the $S$-poly\-nomials of $\reducedbasis$ all reduce
to zero. The redirection is to a non-existent polynomial $\lp_{0}$,
which serves as a convenient, fictional \emph{phantom polynomial};
one might say $\sig\left(0\right)=\mathbf{0}$ and $\poly\left(0\right)=0$. This reconstruction of $\lp$
and $\rules$ allows the algorithm to avoid needless reductions. (It
turns out that the reconstruction of $\rules$ is unnecessary. However,
this is not obvious, so we leave the step in for the time being, and
discuss this in Section~\ref{sec: Correctness}.) We have now rewritten
the original problem in an equivalent form, based on new information.

Although we address correctness in Section~\ref{sec: Correctness},
let us consider for a moment the intuitive reason that this phantom
polynomial $\lp_{0}$ poses no difficulty to correctness. In the original
F5 algorithm, every $S$-poly\-nomial generates a new polynomial
in $\lp$ and a corresponding rule in $\rules$. (See lines~\ref{line: if nonzero}
and~\ref{line: if nonzero add} of $\spol$, lines~\ref{line: if poly(k) =00003D 0}--\ref{line: zero reduction, return 0,0}
of $\topreduction$, and lines~\ref{line: for to add new polys}
and~\ref{line: add new polys} of $\reduction$.) If $\lp_{k}$ reduces
to zero for some $k$, then $k$ is not added to $\currbasis$, but
the rewrite rule $\left(\sig\left(k\right),k\right)$ remains in $\rules$.
Thus the algorithm never uses $\poly\left(k\right)$ again; however,
it uses $\sig\left(k\right)$ to avoid computing other polynomials
with the same signature. The change we propose has the same effect
on $S$-poly\-nomials of $\reducedbasis$: we know \emph{a priori
}that they reduce to zero. We could add a large number of entries
$\left(\sig\left(k\right),0\right)$ to $\lp$, but since the algorithm
never uses them we would merely waste space. Instead, we redirect
the signature $\sig\left(k\right)$ to a phantom polynomial $\lp_{0}$,
which like $\lp_{k}$ is never in fact used.

We call the resulting algorithm F5C, and summarize the modifications
in the pseudocode of Algorithms~\ref{alg:F5C} and \ref{alg:Setup_Reduced_Basis};
the first replaces Algorithm~\ref{alg:F5} entirely. We have separated
most of the modification of $\basisoriginal$ into $\createreducedbasis$,
a separate subalgorithm invoked by $\basis$, the replacement for
$\basisoriginal$.

\begin{algorithm}
\caption{\label{alg:F5C}$\basis$}

\begin{algorithmic}[1]

\GLOBALS$\lp$, $\rules$, $\ordlt$

\INPUTS

\STATE$F=\left(f_{1},f_{2},\ldots,f_{m}\right)\in\polyring^{m}$
(homogeneous)

\STATE$<$, an admissible ordering

\ENDINPUTS

\OUTPUTS

\STATE a Gr\"obner basis of $F$ with respect to $<$

\ENDOUTPUTS

\BODY

\STATE$\ordlt:=<$

\STATE Sort $F$ by increasing total degree, breaking ties by increasing
leading monomial

\STATE $\rules:=\newlist{\newlist{}}$

\STATE $\lp:=\newlist{}$

\STATE Append $\left(\sigformat{}{1},f_{1}\cdot\lc\left(f_{1}\right)^{-1}\right)$
to $\lp$

\STATE $\prevbasis=\left\{ 1\right\} $

\STATE $\reducedbasis=\left\{ f_{1}\right\} $

\STATE $i:=2$

\WHILE{ $i\leq m$}

\STATE Append $\left(\basisvar_{\#\lp+1},f_{i}\cdot\lc\left(f_{i}\right)^{-1}\right)$
to $\lp$

\STATE $\currbasis:=\partialbasis\left(\#\lp,\reducedbasis,\prevbasis\right)$

\IF{$\exists\lambda\in\currbasis$ such that $\poly\left(\lambda\right)=1$}

\RETURN $\left\{ 1\right\} $

\ENDIF

\COMMENT{The only change to $\basisoriginal$ is the addition of
this line}

\STATE $\prevbasis:=\createreducedbasis\left(\currbasis\right)$

\STATE $\reducedbasis:=\left\{ \poly\left(\lambda\right):\;\lambda\in\prevbasis\right\} $

\STATE $i:=i+1$

\ENDWHILE

\RETURN \textbf{$\reducedbasis$}

\ENDBODY

\end{algorithmic}
\end{algorithm}

\begin{algorithm}
\caption{\label{alg:Setup_Reduced_Basis}$\createreducedbasis$}

\begin{algorithmic}[1]

\GLOBALS $\lp$, $\rules$, $\ordlt$

\algindent (modifies $\lp$ and $\rules$)

\INPUTS 

\STATE $\prevbasis$, a list of indices of polynomials in $\lp$
that correspond to a Gr\"obner basis of $\left(f_{1},\ldots,f_{i}\right)$

\ENDINPUTS 

\OUTPUTS 

\STATE $\currbasis\subset\mathbb{N}$, indices of polynomials in
$\lp$ that correspond to a \emph{reduced} Gr\"obner basis of $\left(f_{1},\ldots,f_{i}\right)$

\ENDOUTPUTS 

\BODY 

\STATE Let $B$ be the interreduction of $\left\{ \poly\left(k\right):\; k\in\prevbasis\right\} $\label{line: compute reduced basis}

\STATE $\currbasis:=\left\{ j\right\} _{j=1}^{\#\reducedbasis}$

\STATE $\lp:=\newlist{\left\{ \left(\basisvar_{j},\reducedbasis_{j}\right)\right\} _{j=1}^{\#\reducedbasis}}$\label{line: new signature}

\comment{Lemma \ref{thm: can skip rewrite rules for B} implies that
lines \ref{line: start to create new rules}--\ref{line: finish creating new rules}
are unnecessary}

\comment{All the $S$-polynomials of $\reducedbasis$ reduce to zero;
document this}

\STATE $\rules=\newlist{\left\{ \newlist{}\right\} _{j=1}^{\#\reducedbasis}}$\label{line: start to create new rules}

\FOR{$j:=1$ \TO\ \textbf{ }$\#\reducedbasis-1$}

\STATE $t:=\lt\left(\reducedbasis_{j}\right)$

\FOR{$k:=j+1$ \TO\  $\#\reducedbasis$}

\STATE $u:=\lcm\left(t,\lt\left(\reducedbasis_{k}\right)\right)/\lt\left(\reducedbasis_{k}\right)$

\STATE $\addrule\left(\sigformat{u}{k},0\right)$\label{line: new rule}\label{line: finish creating new rules}

\ENDFOR

\ENDFOR

\RETURN $\currbasis$

\ENDBODY

\end{algorithmic}
\end{algorithm}

\subsection{Experimental results}

One way to compare the three variants would be to measure the absolute
timings when computing various benchmark systems. By this metric,
F5R generally outperforms F5, and F5C generally outperforms F5R: the
exceptions are all toy systems, where the overhead of repeated interreduction
and $\createreducedbasis$ outweigh the benefit of using a reduced
Gr\"obner basis.%
\begin{table}
\begin{centering}
\begin{tabular}{|c|c|c|c|c|c|}
\hline 
system & F5 (sec) & F5R (sec) & F5C (sec) & F5R/F5 & F5C/F5\tabularnewline
\hline
\hline 
Katsura-7 & 6.60 & 5.09 & 4.23 & 0.77 & 0.64\tabularnewline
\hline 
Katsura-8 & 111.05 & 52.22 & 43.88 & 0.47 & 0.40\tabularnewline
\hline 
Katsura-9 & 5577 & 1421 & 1228 & 0.25 & 0.22\tabularnewline
\hline 
Cyclic-6 & 3.91 & 3.88 & 3.41 & 0.99 & 0.87\tabularnewline
\hline 
Cyclic-7 & 1182 & 505 & 381 & 0.43 & 0.32\tabularnewline
\hline 
Cyclic-8 & >4 days & 231455 & 188497 & N/A & N/A\tabularnewline
\hline
\end{tabular}
\par\end{centering}

\caption{\label{tab: results-Sage}Ratios of timings in the Sage (Python) implementation}

All timings obtained using the \texttt{cputime()} function in a Python
implementation in Sage 3.2.1, on a computer with a 2.66GHz Intel Core
2 Quad (Q9450) running Ubuntu Linux with~3GB RAM. The ground field
has characteristic 32003. {*}Computation of Cyclic-8 in F5 has not
terminated on the sixth day of computation, when this draft was committed.
On other computers, the timing was comparable.
\end{table}
\begin{table}
\begin{centering}
\begin{tabular}{|c|c|c|c|c|c|}
\hline 
system & F5 (sec) & F5R (sec) & F5C (sec) & F5R/F5 & F5C/F5\tabularnewline
\hline
\hline 
Katsura-7 & 0.30 & 0.34 & 0.31 & 1.13 & 1.03\tabularnewline
\hline 
Katsura-8 & 4.05 & 4.41 & 3.33 & 1.09 & 0.82\tabularnewline
\hline 
Katsura-9 & 127.14 & 142.81 & 82.48 & 1.12 & 0.65\tabularnewline
\hline 
Schrans-Troost & 25.43 & 21.74 & 21.43 & 0.85 & 0.84\tabularnewline
\hline 
F633 & 0.34 & 0.40 & 0.30 & 1.18 & 0.88\tabularnewline
\hline 
F744 & 1252 & 1132 & 1075 & 0.90 & 0.86\tabularnewline
\hline 
Cyclic-6 & .04 & .03 & .03 & 0.75 & 0.75\tabularnewline
\hline 
Cyclic-7 & 6.5 & 5.39 & 4.35 & 0.83 & 0.67\tabularnewline
\hline 
Cyclic-8 & 3233 & 3101 & 2154 & 0.96 & 0.67\tabularnewline
\hline
\end{tabular}
\par\end{centering}

\caption{\label{tab: results-Singular}Timings for the (compiled) \textsc{Singular}
implementations}

Average of four timings obtained from the \texttt{getTimer()} function
in a modified \textsc{Singular} 3-1-0 kernel, on a computer with a
3.16GHz Intel Xeon (X5460) running Gentoo Linux with~64GB RAM. The
ground field has characteristic 32003.
\end{table}
 Tables~\ref{tab: results-Sage} and~\ref{tab: results-Singular}
give timings and ratios for the variants in two different implementations.
\begin{itemize}
\item Table \ref{tab: results-Sage} gives the results from an implementation
written in Python for the Sage computer algebra system, version 3.4.
Sage is built on several other systems, one of which is \textsc{Singular}
3-0-4. Sage calls \textsc{Singular} to perform certain operations,
so some parts of the implementation run in compiled code, but most
of the algorithm is otherwise implemented in Python. For example,
Line~\ref{line: normal form} of $\reduction$ (reduction by the
previous basis) is handed off to \textsc{Singular}, while the implementation
of \topreduction\  is nearly entirely Python.
\item Table \ref{tab: results-Singular} gives the results from a compiled
\textsc{Singular} implementation built on the \textsc{Singular}~3-1
kernel. This implementation is unsurprisingly much, much faster than
the Sage implementation. Nevertheless, the implementation is still
a work in progress, lacking a large number of optimizations. For example,
so far polynomials are represented by geobuckets~\citep{Yap2000};
the eventual goal is to implement the F4-style reduction that Faug\`ere
advises for efficiency~\citep{Fau99,Fau02Corrected}.\end{itemize}
\begin{rem*}
This \textsc{Singular} implementation has one major difference from
the pseudocode given: its implementation of $\topreduction$ performs
safe reductions of non-leading monomials as well as of the leading
monomials. This helps explain why there seems to be no benefit to
F5R, unlike the Sage implementation. Another factor is that top-reduction
in Sage is performed by interpreted Python code, whereas tail reductions
are performed by the compiled \textsc{Singular} library to which Sage
links. Thus, the penalty for interreduction, relative to top-reduction,
is much lower in Sage, to the benefit of F5R.
\end{rem*}
Timings alone are an unsatisfactory metric for this comparison. They
depend heavily on the efficiency of hidden algorithms, such as the
choice of polynomial representation (lists, buckets, sparse matrices).
It is well-known that the most time-consuming part by far of any non-trivial
Gr\"obner basis computation consists in the reduction operations:
top-reduction, inter-reduction, and computing normal forms. This remains
true for F5, with the additional wrinkle that, as mentioned before,
F5 generally computes many more polynomials than are necessary for
the Gr\"obner basis. Thus a more accurate comparison between the
three variants would consider
\begin{itemize}
\item the number of critical pairs considered,
\item the number of polynomials generated, and
\item the number of reduction operations performed.
\end{itemize}
We present a few examples with benchmark systems in Tables~\ref{tab: comparison by reductions performed}--\ref{tab: Katsura-9 in F5C},
generated from the prototype implementation in Sage.%
\begin{table}
\begin{centering}
\begin{tabular}{|c|c|c|c|}
\hline 
system & reductions in F5 & reductions in F5R & reductions in F5C\tabularnewline
\hline
\hline 
Katsura-4 & 774 & 289 & 222\tabularnewline
\hline 
Katsura-5 & 14597 & 5355 & 3985\tabularnewline
\hline 
Katsura-6 & 1029614 & 77756 & 58082\tabularnewline
\hline 
Cyclic-5 & 510 & 506 & 446\tabularnewline
\hline 
Cyclic-6 & 41333 & 23780 & 14167\tabularnewline
\hline
\end{tabular}
\par\end{centering}

\centering{}\caption{\label{tab: comparison by reductions performed}Reductions performed
by the three F5 variants over a field of characteristic 32003.}

\end{table}
 In each case, the number of reductions performed by F5C remains substantially
lower than the number performed by F5R, which is itself drastically
lower than the number performed by F5. As a reference for comparison,
we modified the toy implementation of the Gebauer-M\"oller algorithm
that is included with the Sage computer algebra system to count all
the reduction operations \citep{GM88}; it performed more than 1,500,000
reductions to compute Cyclic-6. The table shows that F5 performed
approximately 2.4\% of that number, while F5C performed approximately
0.7\% of that number.

In general, F5 and F5R will compute the same number of critical pairs
and polynomials, because they are using the same values of $\prevbasis$.
Top-reducing by a reduced Gr\"obner basis eliminates the vast majority
of reductions, but in F5R $\prevbasis$ still indexes polynomials
whose monomials are reducible by other polynomials, \emph{including
head monomials!} As a consequence, F5R cannot consider fewer critical
pairs or generate fewer polynomials than F5. By contrast, F5C has
discarded from $\prevbasis$ polynomials with redundant head monomials,
and has eliminated reducible lower order monomials. Correspondingly,
there is less work to do.
\begin{example}
\label{exa: Katsura-9 performance}In the Katsura-9 system for F5
and F5R, each pass through the \verb|while| loop of  $\partialbasis$
generates the internal data shown in Table~\ref{tab: Katsura-9 in F5,F5R}.
For F5C, each pass through the \verb|while| loop of $\partialbasisc$
generates the internal data shown in Table~\ref{tab: Katsura-9 in F5C}.
For each $i$, F5R and F5C both compute $\reducedbasis$, the unique
reduced Gr\"obner basis of $F_{i}$. This significantly speeds up
top-reduction, but F5C replaces $\lp$ with labeled polynomials for
$\reducedbasis$. The consequence is that $\prevbasis$ contains fewer
elements, leading $\partialbasisc$ to generate fewer critical pairs,
and hence fewer polynomials for $\currbasis$. Similar behavior occurs
in other large systems.\closer%
\begin{table}
\begin{centering}
\begin{tabular}{|c|c|c|c|}
\hline 
$i$ & $\#\currbasis$ & $\max\left\{ d\right\} $ & $\max\left\{ \#\critpairs_{d}\right\} $\tabularnewline
\hline
\hline 
2 & 2 & N/A & N/A\tabularnewline
\hline 
3 & 4 & 3 & $\#\critpairs_{3}=1$\tabularnewline
\hline 
4 & 8 & 4 & $\#\critpairs_{3}=2$\tabularnewline
\hline 
5 & 16 & 6 & $\#\critpairs_{4}=\#\critpairs_{5}=4$\tabularnewline
\hline 
6 & 32 & 6 & $\#\critpairs_{4}=8$\tabularnewline
\hline 
7 & 60 & 10 & $\#\critpairs_{5}=17$\tabularnewline
\hline 
8 & 132 & 11 & $\#\critpairs_{6}=29$\tabularnewline
\hline 
9 & 524 & 16 & $\#\critpairs_{8}=89$\tabularnewline
\hline 
10 & 1165 & 13 & $\#\critpairs_{8}=276$\tabularnewline
\hline
\end{tabular}
\par\end{centering}

\caption{\label{tab: Katsura-9 in F5,F5R}Internal data of $\partialbasis$
in both F5 and F5R while computing Katsura-9.}

\end{table}
\begin{table}
\begin{centering}
\begin{tabular}{|c|c|c|c|}
\hline 
$i$ & $\#\currbasis$ & $\max\left\{ d\right\} $ & $\max\left\{ \#\critpairs_{d}\right\} $\tabularnewline
\hline
\hline 
2 & 2 & N/A & N/A\tabularnewline
\hline 
3 & 4 & 3 & $\#\critpairs_{3}=1$\tabularnewline
\hline 
4 & 8 & 4 & $\#\critpairs_{3}=2$\tabularnewline
\hline 
5 & 15 & 6 & $\#\critpairs_{3}=\#\critpairs_{4}=4$\tabularnewline
\hline 
6 & 29 & 6 & $\#\critpairs_{4}=\#\critpairs_{6}=6$\tabularnewline
\hline 
7 & 51 & 10 & $\#\critpairs_{5}=12$\tabularnewline
\hline 
8 & 109 & 11 & $\#\critpairs_{6}=29$\tabularnewline
\hline 
9 & 472 & 16 & $\#\critpairs_{8}=71$\tabularnewline
\hline 
10 & 778 & 13 & $\#\critpairs_{8}=89$\tabularnewline
\hline
\end{tabular}
\par\end{centering}

\caption{\label{tab: Katsura-9 in F5C}Internal data of $\partialbasisc$ in
F5C while computing Katsura-9.}

\end{table}

\end{example}

\section{\label{sec: F5 correct}Correctness of the output of F5 and F5C}

In this section we prove that if F5 and F5C terminate, then their
output is correct. Seeing that Faug\`ere has already proved the correctness
of F5, why do we include a new proof? First, we rely on certain aspects
of the proof to explain the modifications that led to F5C, so it is
convenient to re-present a proof here. Another reason is to present
a new generalization of Faug\`ere's characterization of a Gr\"obner
basis; although it is not necessary for F5C, the new characterization
is interesting enough to describe here.
\begin{rem*}
We do not address the details of termination, nor will we even assert
that the algorithms \emph{do} terminate, but in practice we have not
encountered any systems that do not terminate in F5.

Having said that, we would like to address an issue with which some
readers may be familiar. The Magma source code of \citep{Stegers2006}
implements F5R. This code is publicly available, and contains an example
system in the file \texttt{nonTerminatingExample.mag}. As the reader
might expect from the name, this system causes an infinite loop when
given as input to the source code. Roger Dellaca, Justin Gash, and
John Perry traced this loop to an error in $\topreduction$. (Lines~\ref{line: append new poly (top-red)}
and~\ref{line: add new rule (top-red)} were not implemented, which
sabotages the record-keeping of $\rules$.) The corrected Magma code
terminates with the Gr\"obner basis of that system.
\end{rem*}

\subsection{\label{sub: props of sigs}Properties of signatures}

The primary tool in F5 is the signature of a polynomial (Definition~\ref{def:signature}).
The following properties of signatures explain certain choices made
by the algorithm.
\begin{prop}
\label{pro:props of sigs}Let $p,q\in\polyring$, $\multiplier,\othermultiplier,u,v\in\mathbb{M}$,
and $\indexvar,\otherindexvar\in\left\{ 1,2,\ldots,M\right\} $. Suppose
that $\multiplier\basisvar_{\indexvar}$ and $\othermultiplier\basisvar_{\otherindexvar}$
are signatures of $p$ and $q$, respectively. Each of the following
holds:
\begin{lyxlist}{(B)}
\item [{(A)}] $\left(u\multiplier\right)\basisvar_{\indexvar}$ is a signature
of $up$.
\item [{(B)}] If $u\multiplier\basisvar_{\indexvar}\siggt\othermultiplier\basisvar_{\otherindexvar}$,
then $\left(u\multiplier\right)\basisvar_{\indexvar}$ is a signature
of $up\pm vq$.
\item [{(C)}] If $\left(\ltmul{p,q}\multiplier\right)\basisvar_{\indexvar}\siggt\left(\ltmul{q,p}\othermultiplier\right)\basisvar_{\otherindexvar}$,
then $\left(\ltmul{p,q}\multiplier\right)\basisvar_{\indexvar}$ is
a signature of $\SPol\left(p,q\right)$.
\end{lyxlist}
\end{prop}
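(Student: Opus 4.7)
The plan is to prove each part by explicitly constructing a $G$-representation and verifying that it satisfies the signature definition from Definition~\ref{def:signature}. All three parts reduce to bookkeeping about linear combinations of $G$-representations.

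For (A), I would start with a $G$-representation $\mathbf{h}$ of $p$ corresponding to $\mu\basisvar_{\indexvar}$, so $h_{\indexvar+1}=\cdots=h_M=0$ and $\lt(h_{\indexvar})=\multiplier$. Scaling gives $u\mathbf{h}$, which is evidently a $G$-representation of $up$; its last nonzero entry sits in position $\indexvar$ and has head monomial $u\multiplier$, which is exactly what the definition requires.

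For (B), let $\mathbf{h},\mathbf{h}'$ be $G$-representations of $p,q$ corresponding to $\multiplier\basisvar_{\indexvar}$ and $\othermultiplier\basisvar_{\otherindexvar}$. The candidate $G$-representation of $up\pm vq$ is $u\mathbf{h}\pm v\mathbf{h}'$. I split on the two ways the hypothesis $u\multiplier\basisvar_{\indexvar}\siggt v\othermultiplier\basisvar_{\otherindexvar}$ can hold. If $\indexvar>\otherindexvar$, then the $v\mathbf{h}'$ part contributes $0$ in all positions $\geq\indexvar$, so the $\indexvar$-th entry is $uh_\indexvar$ with head monomial $u\multiplier$, and all higher entries vanish. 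If $\indexvar=\otherindexvar$, then the $\indexvar$-th entry is $uh_\indexvar\pm vh'_\indexvar$; the strict inequality $u\multiplier\ordgt v\othermultiplier$ between the two head monomials prevents cancellation at the top, so the head monomial is again $u\multiplier$. In both cases the entries of index greater than $\indexvar$ vanish, verifying $(u\multiplier)\basisvar_{\indexvar}$ as a signature. The one delicate point to watch is that the inequality is \emph{strict}, which is precisely what is needed to exclude the bad cancellation case.

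Part (C) is then an immediate consequence of (B): write
\[
\SPol(p,q)=\lc(q)\,\ltmul{p,q}\cdot p\;-\;\lc(p)\,\ltmul{q,p}\cdot q,
\]
observe that head coefficients are field scalars and do not affect head monomials, and apply (B) with $u=\ltmul{p,q}$ and $v=\ltmul{q,p}$; the hypothesis of (C) is exactly the hypothesis needed in (B). The main (very mild) obstacle is the case analysis in (B) and the observation that strict inequality is what precludes cancellation; the rest is routine.
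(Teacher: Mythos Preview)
Your argument is correct and is precisely the routine verification the paper has in mind; the paper itself omits the proof entirely, remarking only that it ``is straightforward.''

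One small observation worth recording: in part~(B) you state the hypothesis as $u\multiplier\basisvar_{\indexvar}\siggt v\othermultiplier\basisvar_{\otherindexvar}$, with a $v$, whereas the printed statement omits the $v$. Your version is the one that is actually true and the one that is needed: without the $v$ the claim fails when $\indexvar=\otherindexvar$ and $v\othermultiplier\ordgt u\multiplier\ordgt\othermultiplier$, and moreover (C) would not follow from (B). So you have silently repaired a typo in the statement, and your proof of the corrected version is exactly the natural direct construction from Definition~\ref{def:signature}.
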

\noindent The proof is straightforward, so we omit it.
\begin{defn}
Let $u,v\in\termset$ and $j,k\in\{1,\ldots,\#\lp\}$.
We say that \emph{the natural signature of $u\poly(j)$ from $\sig(j)$ with respect to $F$}
is the signature deduced by Proposition~\ref{pro:props of sigs}(A).
We usually omit ``from $\sig(j)$ with respect to $F$'' since it is clear from context.
We similarly define the \emph{natural signature of $u\poly(j)\pm v\poly(k)$}
\emph{(from $\sig(j)$ and $\sig(k)$)} from (B) and
\emph{the natural siganture of $\SPol\left(p,q\right)$ (from $\sig(j)$ and $\sig(k)$)}
from (C). If the hypotheses of (B) and (C) are unsatisfied,
then the natural signature is undefined.
\end{defn}

The following proposition implies that the labeled polynomials of
$\lp$ are admissible with respect to the input at every moment during
the algorithm's execution.
\begin{prop}
\label{pro:red to pr over smaller sigs}Each of the following holds.
\begin{lyxlist}{(B)}
\item [{(A)}] For every $k\in\left\{ 1,2,\ldots,\#\lp\right\} $, $\sig\left(k\right)$
is the natural signature of $\poly\left(k\right)$ with respect to $F$ when
$\lp_{k}$ is defined in Line~\ref{line: create new poly from S-poly}
of $\spol$ and Line~\ref{line: append new poly (top-red)} of $\topreduction$.
\item [{(B)}] After the call\[
h:=\normalform\left(\poly\left(k\right),\safereducers,\ordlt\right)\]
in Line~\ref{line: normal form} of  $\reduction$, $\sig\left(k\right)$
is the natural signature of $h$ with respect to $F$.
\item [{(C)}] For all $k\in\left\{ 1,2,\ldots,\#\lp\right\} $, $\sig\left(k\right)$
remains invariant, and is the natural signature of $\poly\left(k\right)$ with
respect to $F$.
\end{lyxlist}
\end{prop}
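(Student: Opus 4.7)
The plan is to prove (A), (B), and (C) simultaneously by induction on the sequence of operations that modify $\lp$. The invariance claim in (C) is essentially syntactic: no line of the pseudocode ever reassigns $\sig(k)$ after $\lp_k$ is first appended, since every in-place update in $\reduction$ and $\topreduction$ has the form $\lp_k := (\sig(k),\cdot)$. For the base cases, when $\basis$ (or $\basisoriginal$) appends $(\basisvar_j, f_j\lc(f_j)^{-1})$ the representation $h_j=\lc(f_j)^{-1}$, $h_\ell=0$ for $\ell\neq j$ is admissible immediately; when $\createreducedbasis$ in F5C resets $\lp$ to $(\basisvar_j,\reducedbasis_j)_{j=1}^{\#\reducedbasis}$, each entry is trivially admissible with respect to the new module basis $\reducedbasis$. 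From this point on, all claims are interpreted relative to the current basis.

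For (A), I examine the two appending sites. In line~\ref{line: create new poly from S-poly} of $\spol$, the new polynomial $s=\SPol(\poly(k),\poly(\ell))$ is assigned signature $u\cdot\sig(k)$; by the inductive hypothesis $\sig(k)$ and $\sig(\ell)$ are signatures of $\poly(k)$ and $\poly(\ell)$, while $\critpair$ has arranged $u\cdot\sig(k)\siggt v\cdot\sig(\ell)$, so Proposition~\ref{pro:props of sigs}(C) certifies the assignment. In line~\ref{line: append new poly (top-red)} of $\topreduction$, the appended entry $(u\cdot\sig(j),\poly(k)-c\,u\,\poly(j))$ is created only on the branch $u\cdot\sig(j)\siggt\sig(k)$, so Proposition~\ref{pro:props of sigs}(B) applies with $u\,\poly(j)$ as the dominant summand, giving admissibility of $u\cdot\sig(j)$.

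For (B), the call $h:=\normalform(\poly(k),\reducedbasis,\ordlt)$ writes $h=\poly(k)-\sum_\ell c_\ell v_\ell g_\ell$ with each $g_\ell\in\reducedbasis$. The essential observation is that when $\reduction$ processes an index $k$ produced in the current stage, the index of $\sig(k)$ strictly exceeds that of every signature attached to a polynomial of $\reducedbasis$: inspecting $\partialbasis$ shows that critical pairs generated at the current stage always involve $\curridx$ or one of its descendants, so their $S$-polynomial signatures and every subsequent unsafe rewriting in $\topreduction$ carry the index of $f_i$, while $\reducedbasis$ carries only indices from the prior stage. Each subtracted term $c_\ell v_\ell g_\ell$ therefore has natural signature $\siglt\sig(k)$, and iterating Proposition~\ref{pro:props of sigs}(B) yields $\sig(k)$ as a signature of $h$. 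The safe branch of $\topreduction$, governed by $u\cdot\sig(j)\siglt\sig(k)$, is covered by the same proposition, so replacing $\poly(k)$ with $\poly(k)-c\,u\,\poly(j)$ preserves $\sig(k)$; this disposes of the in-place update case required for (C).

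The main obstacle I anticipate is making the index comparison in (B) airtight for F5C, where the module basis itself changes between stages. One must confirm that immediately after $\createreducedbasis$ the invariant holds with respect to the new basis, and that the reset preserves the separation between "old" indices (assigned to $\reducedbasis$) and "new" ones (assigned to $f_i$ and its descendants). This reduces to observing that $\createreducedbasis$ places $\reducedbasis$ into positions $1,\ldots,\#\reducedbasis$ of the new $\lp$, whereupon $\basis$ appends $f_i$ with index $\#\reducedbasis+1$, so every signature produced in the new stage has index strictly larger than any index present in $\reducedbasis$; the inductive hypothesis applied in the next call to $\partialbasis$ is then exactly the conclusion of the prior stage, re-expressed in the new coordinates.
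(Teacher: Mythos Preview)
Your argument follows the paper's own approach---Proposition~\ref{pro:props of sigs} together with inspection of the subalgorithms that create or modify labeled polynomials---and supplies the inductive details that the paper leaves implicit. One point needs correction, however: $\critpair$ only guarantees $u\cdot\sig(k)\sigge v\cdot\sig(\ell)$ after the swap, not the strict inequality you invoke to apply Proposition~\ref{pro:props of sigs}(C). If the two natural signatures coincide, the leading terms of the corresponding $F$-representations at position~$\indexvar$ may cancel, and then $u\cdot\sig(k)$ is not obviously a signature of the $S$-polynomial. Strictness is in fact enforced one step later, by the $\rewritable$ test at line~\ref{line: spol rewritiable?} of $\spol$: as the paper records in Lemma~\ref{lem: same sig rewritable}, equal signatures force the component with smaller index in $\lp$ to be rewritable, so the pair is discarded before line~\ref{line: create new poly from S-poly} is reached. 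With this adjustment your proof of (A) goes through; your treatments of (B), of the safe branch in $\topreduction$, and of the F5C reset are correct as written.
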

\noindent The proof follows without difficulty from Proposition \ref{pro:props of sigs}
and inspection of the algorithms that create or modify labeled polynomials:
$\partialbasis$, $\spol$, $\reduction$, and $\topreduction$.
Top-reductions that generate new polynomials correspond to new $S$-polynomials;
they are simply ``discovered'', and generated, in a different place.
\begin{rem*}
Although $\sig\left(k\right)$ is \emph{a} signature of $\poly\left(k\right)$,
it need not be the \emph{minimal} signature of $\poly\left(k\right)$.
For example, if F5 is given the input $F=\left(xh+h^{2},yh+h^{2}\right)$
then $\spol$ computes an $S$-poly\-nomial and creates the labeled
polynomial\[
\lp_{3}=\left(x\basisvar_{2},yh^{2}-xh^{2}\right).\]
Hence $\sig\left(3\right)=x\basisvar_{2}$, but it is also true that\[
\SPol\left(f_{1},f_{2}\right)=-hf_{1}+hf_{2}.\]
Thus $h\basisvar_{2}$ is also a signature of $\poly\left(3\right)$;
in fact, it is the minimal signature.
Since $h\basisvar_{2}\siglt x\basisvar_{2}$, $x\basisvar_{2}$ is
not the minimal signature of $f_{2}$, although it is the natural signature.\end{rem*}
We can now explain what is meant by a ``safe'' top-reduction.
\begin{defn}
\label{def:sig-preserving S-rep}Let $F\in\polyring^{m}$; all signatures
are with respect to $F$. Suppose that $\multiplier\basisvar_{\indexvar}$
is the natural signature of an $S$-polynomial $S$ generated by $\poly\left(a\right)$
and $\poly\left(b\right)$, and $\mathbf{h}$ is an $S$-representation
of $S$ such that the natural signatures of the products satisfy\[
\lt\left(h_{\lambda}\right)\sig\left(\lambda\right)\siglt\multiplier\basisvar_{\indexvar}\]
for all $\forall\lambda=1,\ldots,\#\mathbf{h}$ \emph{except one},
say $\lambda'$, in which case $\lt\left(h_{\lambda'}\right)\sig\left(\lambda'\right)=\multiplier\basisvar_{\indexvar}$
and $\lambda'>a,b$. We call $\mathbf{h}$ a \emph{signature-preserving}\textbf{\emph{
}}$S$-repre\-sen\-ta\-tion.
\end{defn}
Proposition~\ref{pro:red to pr over smaller sigs} implies that top-reductions
that do not generate new polynomials create signature-preserving $S$-repre\-sen\-ta\-tions
of $S$-poly\-nomials. Top-reductions that do generate new polynomials
correspond to new $S$-poly\-nomials, and the reductions of the new
polynomials likewise correspond to signature-preserving $S$-re\-pre\-sen\-ta\-tions.
Thus, if we are at a stage of
the algorithm where $\spol$ generated $\lp_{k}$, but $\reduction$
has not yet reduced it, we say that $\reduction$ \emph{is} \emph{scheduled
to compute a signature-preserving $S$-repre\-sen\-ta\-tion}. Once
it computes the representation, we say that the algorithm \emph{has
computed a signature-preserving reduction to zero.}

\subsection{\label{sub:Rewritable-polynomials}Rewritable Polynomials}

As Faug\`ere illustrates in Section~2 of \citep{Fau02Corrected},
linear algebra suggests that two rows of the Sylvester matrix of $F$
need not be triangularized if one row has already been used in the
triangularization of another row. This carries over into the $F$-re\-pre\-sen\-ta\-tions
of $S$-poly\-nomial components, so F5 uses signatures to hunt for
such redundant components. The structure $\rules$ tracks which signatures
have already been computed.
\begin{defn}
\label{def: list of rewritings}Let $\rules$ be a list of $m$ lists
of tuples of the form $\rho=\left(\multiplier,j\right)$. We write
$\rules_{i}$ for the $i$th list in $\rules$. We say that $\rules$
is a \emph{list of rewritings}\textbf{\emph{ }}\emph{for}\textbf{\emph{
$\lp$}} if for every $i=1,\ldots,m$ and for every $\rho_{\ell}=\left(\multiplier,j\right)\in\rules_{i}$
there exist $p,q\in\polyring$ such that
\begin{enumerate}
\item $p=\poly\left(a\right)$, $q=\poly\left(b\right)$ for some $a,b\in\currbasis$;
\item $\max_{\siglt}\left\{ \ltmul{p,q}\cdot\sig\left(a\right),\ltmul{q,p}\cdot\sig\left(b\right)\right\} =\multiplier\basisvar_{i}$;
\item $j>a,b$ and the first defined value of $\poly\left(j\right)$ is
$\SPol\left(p,q\right)$;
\item there exists (or  $\reduction$ is scheduled to compute) a signature-preserving
$S$-re\-pre\-sen\-ta\-tion $\mathbf{h}$ of $\SPol\left(p,q\right)$
such that $h_{j}=1$; and
\item if $\rho_{\ell'}=\left(\othermultiplier,j'\right)\in\rules_{i}$ and
$\ell'>\ell$, then $j'>j$.
\end{enumerate}
\noindent We call $\poly\left(j\right)$ the \emph{rewriting} of $\SPol\left(p,q\right)$.\end{defn}
\begin{rem*}
When we speak of $\SPol\left(p,q\right)$, we include any unsafe top-reduction
that is computed in $\topreduction$.\end{rem*}
\begin{prop}
\label{pro: top-reds recorded in rules}Every signature-preserving
reduction by F5 of an $S$-poly\-nomial $S$ to the polynomial $p$
(where possibly $p=0$) is recorded in some $\rules_{i}$ by the entry
$\left(u\cdot\sig\left(k\right),j\right)$ where:
\begin{itemize}
\item $S=u\cdot\poly\left(k\right)-v\cdot\poly\left(\ell\right)$ for some
$\ell\in\currbasis$ and appropriate $u,v\in\mathbb{M}$;
\item $u\cdot\sig\left(k\right)\siggt v\cdot\sig\left(\ell\right)$;
\item the first defined value of $\poly\left(j\right)$ is $S$, and the
final value of $\poly\left(j\right)$ is $p$; and
\item $j>k,\ell$.
\end{itemize}
\end{prop}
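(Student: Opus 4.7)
The plan is to trace the exact locations in the algorithm where $\addrule$ is invoked and to verify directly that each invocation produces a rewrite entry satisfying the four bulleted properties. Inspection of the pseudocode shows that $\addrule$ is called in only two places: (i) in $\spol$, immediately after an $S$-polynomial is appended to $\lp$; and (ii) in $\topreduction$, when an unsafe top-reduction forces the creation of a new labeled polynomial of strictly larger signature. By the discussion following Definition~\ref{def: signature-preserving reduction}, the unsafe branch of $\topreduction$ genuinely spawns a new $S$-polynomial in the sense of the proposition, so together these two cases account for every signature-preserving reduction of an $S$-polynomial performed by F5.

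For case (i), $\critpair$ returns a tuple $(t,k,u,\ell,v)$ in which the swap guarantees $u\cdot\sig(k)\sigge v\cdot\sig(\ell)$; strictness follows from the $\rewritable$ filter at line~\ref{line: spol rewritiable?} of $\spol$, which rejects the pair whenever both components would share a signature. The $S$-polynomial $S=u\cdot\poly(k)-c\,v\cdot\poly(\ell)$ is appended to $\lp$ at the fresh index $j:=\#\lp$, so $j>k,\ell$ automatically and the first defined value of $\poly(j)$ is $S$. The subsequent call $\addrule(u\cdot\sig(k),\#\lp)$ inserts precisely the required entry. By Proposition~\ref{pro:red to pr over smaller sigs}, any further signature-preserving top-reductions modify only $\poly(j)$ and leave $\sig(j)$ invariant, so the final value of $\poly(j)$ is indeed the reduced polynomial~$p$ named in the proposition.

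For case (ii), let $k$ be the index being reduced in $\topreduction$ and $j'$ the reductor returned by $\findreductor$, with multiplier $u=\lt(\poly(k))/\lt(\poly(j'))$. The unsafe branch triggers precisely when $u\cdot\sig(j')\siggt\sig(k)$, whereupon $\poly(k)-c\,u\cdot\poly(j')$ is appended at index $\#\lp$ and $\addrule(u\cdot\sig(j'),\#\lp)$ is called. Since $\lt(\poly(j'))\mid\lt(\poly(k))$, the relevant lcm collapses to $\lt(\poly(k))$, so this difference coincides up to sign and constant with the $S$-polynomial of $\poly(j')$ and $\poly(k)$; identifying the proposition's $k$ with the algorithm's $j'$ and the proposition's $\ell$ with the algorithm's $k$, the strict inequality $u\cdot\sig(k)\siggt v\cdot\sig(\ell)$ is exactly the condition that triggered the unsafe branch. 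The analysis of the final value of $\poly(\#\lp)$ is identical to case~(i).

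The main obstacle I anticipate is the careful matching of names across the two cases: the letters $k,\ell$ of the proposition play different algorithmic roles in (i) and (ii), and verifying the strict signature inequality in each relies on distinct mechanisms (the $\rewritable$ filter in case~(i) versus the unsafe-branch condition in case~(ii)). Once that dictionary is in place, the remaining four properties drop out directly from inspection of the pseudocode and from Proposition~\ref{pro:red to pr over smaller sigs}.
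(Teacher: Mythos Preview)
Your proposal is correct and follows the same approach as the paper, whose own proof is the single sentence ``The proof follows from inspection of the algorithms that create and top-reduce polynomials''; you have simply carried out that inspection explicitly for the two call-sites of $\addrule$. (Your appeal to the $\rewritable$ filter for the strict inequality in case~(i) is essentially a forward reference to Lemma~\ref{lem: same sig rewritable}, but since that lemma's proof is itself a direct inspection argument independent of the present proposition, no circularity arises.)
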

The proof follows from inspection of the algorithms that create and
top-reduce poly\-nomials.
\begin{prop}
At every point during the execution of F5, the global variable $\rules$
satisfies Definition \ref{def: list of rewritings}.
\end{prop}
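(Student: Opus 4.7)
The plan is a straightforward induction on the number of times $\rules$ is modified during execution. Since $\rules$ starts as a list of empty lists (see the initialization in $\basisoriginal$/$\basis$), the base case is trivial: the five conditions of Definition~\ref{def: list of rewritings} are vacuous. For the inductive step, I would catalogue every invocation of $\addrule$ in the pseudocode and verify that each preserves the invariant.

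There are essentially two situations that trigger $\addrule$ in plain F5: the new-$S$-polynomial branch of $\spol$ (after line~\ref{line: create new poly from S-poly}) and the unsafe top-reduction branch of $\topreduction$ (line~\ref{line: add new rule (top-red)}). In the first case, the critical pair $(t,k,u,\ell,v)$ arriving from $\critpair$ has already been reordered so that $u\cdot\sig(k)\siggt v\cdot\sig(\ell)$, so setting the signature of the new $\lp_{\#\lp}$ to $u\cdot\sig(k)$ yields conditions~(1)--(2) of the definition directly from the structure of $\spol$; condition~(3) holds because the first defined value of $\poly(\#\lp)$ is precisely $\SPol(\poly(k),\poly(\ell))$ and $\#\lp$ exceeds both $k$ and $\ell$; condition~(4) holds because $\reduction$ is about to be \emph{scheduled} to reduce this polynomial, in the sense of Definition~\ref{def: signature-preserving reduction}. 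In the unsafe-top-reduction case, Proposition~\ref{pro: top-reds recorded in rules} hands us conditions~(1)--(4) immediately. In both cases, condition~(5) follows from the fact that $\addrule$ appends to the end of $\rules_\indexvar$ and the index stored is $\#\lp$, which is strictly greater than every index previously appearing in any $\rules_{\indexvar'}$.

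A separate check is required at the moment $\reduction$ completes a signature-preserving reduction of $\poly(k)$. The rule for $\sig(k)$ already sits in $\rules$, and the \emph{scheduled} representation promised in condition~(4) now becomes an \emph{actual} signature-preserving $S$-representation by Proposition~\ref{pro:red to pr over smaller sigs}(B) together with Definition~\ref{def:sig-preserving S-rep}. No further modification of $\rules$ is made at this step, so the invariant is preserved.

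The main obstacle I expect is condition~(4), where we must argue that a signature-preserving $S$-representation really does get produced (or is genuinely scheduled) for \emph{every} rule added. This is delicate because $\reduction$ may spawn new labeled polynomials via unsafe top-reductions, and one must verify that the bookkeeping cascade keeps pace: each such spawn is itself immediately recorded by $\addrule$ in $\topreduction$, so condition~(4) is satisfied at the very moment the rule is created by declaring the spawned polynomial the rewriting, and then recursively handled when it in turn is reduced. Everything else---the index comparisons, the maximality of the signature, the monotonicity condition~(5)---follows from inspection of the algorithms and does not require deeper work.
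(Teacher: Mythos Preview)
Your proposal is correct and follows essentially the same approach as the paper: the paper's own proof is the single sentence ``The proof follows from Proposition~\ref{pro: top-reds recorded in rules} and inspection of the algorithms that create and modify $\rules$,'' and what you have written is precisely that inspection carried out explicitly, organized as an induction on modifications to $\rules$.
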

The proof follows from Proposition~\ref{pro: top-reds recorded in rules}
and inspection of the algorithms that create and modify $\rules$.
\begin{defn}
\label{def: rewritable}Let $j,k\in\currbasis$, $u\in\mathbb{M}$,
and $\sig\left(k\right)=\multiplier\basisvar_{\indexvar}$. At any
given point during the execution of the algorithm we say that the
polynomial multiple $u\poly\left(k\right)$ is \emph{rewritable by
$\poly\left(j\right)$}\textbf{\emph{ }}\emph{in $W=\rules_{\indexvar}$}
if
\begin{itemize}
\item $k\neq j$;
\item $\poly\left(j\right)$ is the rewriting of an $S$-poly\-nomial;
\item $\sig\left(j\right)=\multiplier'\basisvar_{\indexvar}$ and $\multiplier'\mid u\multiplier$
(note the same index $\indexvar$ as $\sig\left(k\right)$);
\item $\left(\multiplier',j\right)=W_{a}$ for some $a\in\mathbb{N}$; and
\item for any $W_{b}=\left(\multiplier'',c\right)$ such that $\multiplier''\mid u\multiplier$,
either $W_{a}=W_{b}$ or $b<a$.
\end{itemize}
We usually omit some or all of the phrase ``by $\poly\left(j\right)$
in $\rules_{\indexvar}$.'' We call $\poly\left(j\right)$ the \emph{rewriter}
of $u\poly\left(k\right)$.
\end{defn}

\begin{prop}
\label{pro: algorithm rewritable detects rewritables}Let $u\in\mathbb{M}$
and $k\in\currbasis$. The following are equivalent.
\begin{lyxlist}{(M)}
\item [{(A)}] $u\poly\left(k\right)$ is rewritable in $\rules_{\indexvar}$,
where $\sig\left(k\right)=\multiplier\basisvar_{\indexvar}$ for some
$\multiplier\in\mathbb{M}$.
\item [{(B)}] $\rewritable\left(u,k\right)$ returns \verb|true|.
\end{lyxlist}
\end{prop}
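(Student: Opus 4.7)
The plan is to unwrap both sides into statements about entries of $\rules_\indexvar$ and then match them by tracing the loop in $\findrewriting$. First I would observe that $\rewritable(u,k)$ returns \verb|true| iff $\findrewriting(u,k)$ returns some $j\neq k$, and that $\findrewriting$ (given $\sig(k)=\multiplier_k\basisvar_\indexvar$) walks $\rules_\indexvar$ from position $\ctr=\#\rules_\indexvar$ downward and returns the index $j$ of the \emph{first} entry $(\multiplier_j,j)$ it encounters with $\multiplier_j\mid u\multiplier_k$, defaulting to $k$ if no such entry exists. So on the algorithmic side, (B) says exactly: the latest entry in $\rules_\indexvar$ whose multiplier divides $u\multiplier_k$ exists and has second coordinate different from $k$.

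For (A)$\Rightarrow$(B), suppose $u\poly(k)$ is rewritable by $\poly(j)$ in $W=\rules_\indexvar$, so that $(\multiplier',j)=W_a$ for some $a$ with $j\neq k$, $\multiplier'\mid u\multiplier_k$, and the maximality condition of Definition~\ref{def: rewritable} holds: no $W_b$ with $b>a$ satisfies $\multiplier''\mid u\multiplier_k$. Then as $\findrewriting$ scans downward, every position $b>a$ fails its divisibility test, and when it reaches $\ctr=a$ it returns $j\neq k$, so $\rewritable(u,k)$ returns \verb|true|.

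For (B)$\Rightarrow$(A), suppose $\rewritable(u,k)$ returns \verb|true|, so $\findrewriting(u,k)$ returned some $j\neq k$ coming from an entry $W_a=(\multiplier',j)\in\rules_\indexvar$ with $\multiplier'\mid u\multiplier_k$, and no later entry $W_b$ (i.e.\ $b>a$) satisfies $\multiplier''\mid u\multiplier_k$ (since otherwise the backward scan would have stopped there instead). The last two bullets of Definition~\ref{def: rewritable} are now immediate. The remaining bullets -- that $\poly(j)$ is the rewriting of an $S$-polynomial and that $\sig(j)=\multiplier'\basisvar_\indexvar$ with $\multiplier'\mid u\multiplier_k$ -- follow from the invariant that $\rules$ is a list of rewritings for $\lp$ (established just above the proposition): condition~(2) of Definition~\ref{def: list of rewritings} together with Proposition~\ref{pro:props of sigs}(C) guarantees that $\sig(j)$ equals the signature $\multiplier'\basisvar_\indexvar$ attached to the recorded rule, and condition~(4) records it as a rewriting of an $S$-polynomial.

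I expect no serious obstacle; the whole proposition is essentially that $\findrewriting$ implements Definition~\ref{def: rewritable}. The only care-point is aligning the notation: the algorithm extracts $\multiplier_k$ from $\sig(k)$ on the fly, whereas the definition states everything in terms of a fixed $\sig(k)=\multiplier\basisvar_\indexvar$, and one must note that condition~(5) of Definition~\ref{def: list of rewritings} (later list positions carry larger $j$-indices) makes "latest entry in the list" and "largest position $a$" interchangeable, which is what the backward scan exploits.
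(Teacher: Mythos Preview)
Your proposal is correct and takes essentially the same approach as the paper, which simply states that the proof ``follows from inspection of the algorithms that create, inspect, and modify $\rules$.'' You have fleshed out that inspection into an explicit two-direction argument tracing the backward scan in $\findrewriting$ against the maximal-position clause of Definition~\ref{def: rewritable}; the only minor point is that your justification of $\sig(j)=\multiplier'\basisvar_\indexvar$ via Proposition~\ref{pro:props of sigs}(C) is slightly roundabout---the more direct route is to observe from $\addrule$ and its call sites in $\spol$ and $\topreduction$ that the rule entry $(\multiplier',j)$ is always recorded with exactly the signature just assigned to $\lp_j$.
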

\noindent The proof follows from inspection of the algorithms that
create, inspect, and modify $\rules$.

\begin{prop}
\label{pro:rewriter has larger index than rewritable}If a polynomial
multiple $u\poly\left(k\right)$ is rewritable, then the rewriter
$\poly\left(j\right)$ satisfies $j>k$.
\end{prop}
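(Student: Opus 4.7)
The plan is to exploit two facts: the indices recorded in $\rules_\indexvar$ grow monotonically over the course of the algorithm, and the creation of $\poly(k)$ is accompanied by a rule entry in $\rules_\indexvar$ whose monomial trivially divides $u\multiplier$. Together these force any later-chosen candidate to have a strictly larger index than $k$.

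First I would establish the monotonicity invariant by inspection of the pseudocode: every $\addrule$ call issued by $\spol$ (Line~\ref{line: create new poly from S-poly}) or $\topreduction$ (Line~\ref{line: add new rule (top-red)}) records the current value of $\#\lp$, which is the position of the labeled polynomial just appended to $\lp$. Since $\#\lp$ strictly increases on each append, and since rules are appended to $\rules_\indexvar$ in chronological order and never deleted, the indices appearing in $\rules_\indexvar$ form a strictly increasing sequence in the order of insertion.

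Next I would split on the origin of $\poly(k)$, writing $\sig(k) = \multiplier\basisvar_\indexvar$ and $W = \rules_\indexvar$. If $\poly(k) = f_\indexvar$ is one of the input polynomials, then inspection of $\basisoriginal$ and $\partialbasis$ shows that $\rules_\indexvar$ is initialized as an empty list immediately after $f_\indexvar$ is appended to $\lp$, so every entry subsequently inserted into $W$ has index $>k$, and the claim follows. Otherwise $\poly(k)$ was produced inside $\spol$ or $\topreduction$, and an $\addrule$ call placed $(\multiplier,k)$ into $W$ at the moment $\poly(k)$ was appended to $\lp$. Since $\multiplier \mid u\multiplier$, the entry $(\multiplier,k)$ is a candidate in the sense of Definition~\ref{def: rewritable}, and by clause~5 of that definition the actual rewriter $(\multiplier',j)$ occurs no earlier in $W$ than $(\multiplier,k)$; combined with the fact that $j\neq k$, the monotonicity invariant then forces $j>k$.

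The main obstacle is not mathematical depth but bookkeeping: one must verify by careful inspection that every code path which creates, modifies, or appends a labeled polynomial is indeed paired with the expected $\addrule$ call, so that the invariants governing $\rules$ are genuinely maintained. Once this routine check is in hand, the statement falls out immediately from the monotonicity invariant and the maximality clause in the definition of rewritability.
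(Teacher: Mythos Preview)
Your argument is correct and follows essentially the same route as the paper's one-line proof, which simply cites clause~5 of Definition~\ref{def: list of rewritings} (monotonicity of indices in $\rules_\indexvar$) together with the maximality clause of Definition~\ref{def: rewritable}. The only organizational difference is that you re-derive monotonicity by direct inspection of the pseudocode, whereas the paper has already folded that invariant into the definition of a list of rewritings and verified separately that $\rules$ satisfies it; your explicit case split on whether $\poly(k)$ is an input polynomial $f_\indexvar$ is a detail the paper leaves implicit.
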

The proof follows from Definitions~\ref{def: list of rewritings}
($j'>j$) and~\ref{def: rewritable} ($b<a$).
\begin{prop}
\label{pro: structure of rewriting}Let $k\in\currbasis$. Suppose
that a polynomial multiple $p=u\poly\left(k\right)$ is rewritable
by some $\poly\left(j\right)$ in $\rules_{\indexvar}$. If $\reduction$
terminates, then there exist $c\in\mathbb{F}$, $d\in\mathbb{M}$
and $h_{\lambda}\in\polyring$ (for each $\lambda\in\left(\currbasis\cup\completed\right)\backslash\left\{ j\right\} $)
satisfying\begin{equation}
p=cd\cdot\poly\left(j\right)+\sum_{\lambda\in\left(\currbasis\cup\completed\right)\backslash\left\{ j\right\} }h_{\lambda}\poly\left(\lambda\right)\label{eq: SPol rep after rewriting}\end{equation}
where
\begin{itemize}
\item for all $\lambda\in\left(\currbasis\cup\completed\right)\backslash\left\{ j\right\} $
if $h_{\lambda}\neq0$ then the natural signature of $\lt\left(h_{\lambda}\right)\cdot\poly\left(\lambda\right)$
is smaller than $u\sig\left(k\right)$; and
\item $u\sig\left(k\right)$ is the natural signature of $cd\cdot\poly\left(j\right)$.
\end{itemize}
\end{prop}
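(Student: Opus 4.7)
The plan is to construct the required representation explicitly, combining the signature information carried by $\poly(j)$ with the expansion of the $S$-polynomial that first defined it. By Definition~\ref{def: rewritable}, $\sig(k)$ and $\sig(j)$ share the module index $\indexvar$, so write $\sig(k) = \multiplier\basisvar_\indexvar$ and $\sig(j) = \multiplier'\basisvar_\indexvar$ with $\multiplier'\mid u\multiplier$, and set $d:=u\multiplier/\multiplier'$. Then $d\sig(j) = u\sig(k)$, and Proposition~\ref{pro:props of sigs}(A) already yields the second conclusion: $u\sig(k)$ is a signature of $cd\poly(j)$ for any $c\in\mathbb{F}$.

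For the first conclusion, invoke Definition~\ref{def: list of rewritings}(4): because $\reduction$ has terminated, the signature-preserving $S$-representation of the $S$-polynomial $S' = \SPol(\poly(a),\poly(b))$ that first defined $\poly(j)$ is available, so
\begin{equation*}
S' = \poly(j) + \sum_{\lambda\neq j} h^{(S)}_\lambda \poly(\lambda)
\qquad\text{with}\qquad
\lt(h^{(S)}_\lambda)\sig(\lambda) \siglt \sig(j) \text{ for } \lambda\neq j.
\end{equation*}
Expanding $S' = \lc(\poly(b))\ltmul{a,b}\poly(a) - \lc(\poly(a))\ltmul{b,a}\poly(b)$, multiplying by $cd$, and rearranging gives
\begin{equation*}
u\poly(k) = cd\poly(j) + \bigl[u\poly(k) - cd\lc(\poly(b))\ltmul{a,b}\poly(a)\bigr] + cd\lc(\poly(a))\ltmul{b,a}\poly(b) + cd\sum_{\lambda\neq j} h^{(S)}_\lambda\poly(\lambda).
\end{equation*}
Assuming without loss of generality that $\ltmul{a,b}\sig(a) = \multiplier'\basisvar_\indexvar$ is the dominant component of $S'$, every summand on the right after $cd\poly(j)$ except the bracketed pair already has natural signature strictly below $u\sig(k)$.

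The principal step is to absorb the bracketed pair into a $\poly(\lambda)$-sum over $(\currbasis\cup\completed)\setminus\{j\}$ with natural signatures below $u\sig(k)$. By Proposition~\ref{pro:red to pr over smaller sigs}(C), $\poly(k)$ and $\poly(a)$ each admit an $F$-representation whose $\basisvar_\indexvar$-component has head monomial $\multiplier$ and $\multiplier_a$ (where $\sig(a) = \multiplier_a\basisvar_\indexvar$) respectively; after multiplying by $u$ and by $cd\lc(\poly(b))\ltmul{a,b}$, both head monomials become $u\multiplier$, and for the correct choice of $c\in\mathbb{F}$ the two head coefficients cancel. Hence $u\poly(k) - cd\lc(\poly(b))\ltmul{a,b}\poly(a)$ possesses an $F$-representation of natural signature strictly below $u\sig(k)$. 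Since each generator $f_i$ is itself $\poly(\lambda_i)$ with $\sig(\lambda_i) = \basisvar_i$---either as an initial labeled polynomial in F5, or after the reset performed by $\createreducedbasis$ in F5C---this $F$-representation transfers directly to a $\poly(\lambda)$-representation satisfying the signature bound. The main obstacle is precisely this last transfer: verifying that the generator-level cancellation survives when we re-index via $\currbasis\cup\completed$, and in particular that in F5C the reset of $\lp$ does not disturb the bound. Once this is handled, assembling the four contributions yields equation~(\ref{eq: SPol rep after rewriting}).
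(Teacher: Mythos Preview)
Your argument is correct and follows the paper's strategy closely: both choose $d$ so that $d\multiplier'=u\multiplier$, cancel the leading $f_{\indexvar}$-term of $u\poly(k)$ against that of a scalar multiple of $dS'$ at the level of $F$-representations, and then use the identification $f_i=\poly(\lambda_i)$ with $\sig(\lambda_i)=\basisvar_i$ to convert that $F$-representation back into a $(\currbasis\cup\completed)$-representation of controlled signature. The only organizational difference is that the paper subtracts the full $F$-expansion of $S'$ at once (writing $p=(p-\tfrac{\alpha}{\beta}dS')+\tfrac{\alpha}{\beta}dS'$ and then substituting the signature-preserving $\mathcal{G}$-representation for the second copy), whereas you peel off only the dominant component $\ltmul{a,b}\poly(a)$ and carry the subdominant component $\ltmul{b,a}\poly(b)$ as a separate explicit summand; the paper's bookkeeping is slightly tidier, but the content is identical. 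Your self-identified ``main obstacle''---the transfer from the cancelled $F$-representation back to a $\poly(\lambda)$-sum---is precisely what the paper resolves via the case-split in its definition of $h_\lambda$, and your own observation that each $f_i$ sits in $\lp$ with signature $\basisvar_i$ is exactly what makes that transfer go through (note $j$ is an $S$-polynomial index, hence never one of these $\lambda_i$).
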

\begin{rem*}
It does \emph{not} necessarily follow that $\mathbf{h}$ is an $\lt\left(p\right)$-representation
of $p$. The usefulness of Proposition \ref{pro: structure of rewriting}
lies in the fact that all polynomials in~\eqref{eq: SPol rep after rewriting}
have a smaller signature than $p$ except possibly $cd\cdot\poly\left(j\right)$.
If $\poly\left(j\right)=0$ then the Proposition still holds, since
$u\sig\left(k\right)$ would be a non-minimal signature of the zero
polynomial.\end{rem*}
\begin{proof}
Assume that  $\reduction$ terminates. Let $\sig\left(k\right)=\multiplier\basisvar_{\indexvar}$.
By Definition~\ref{def:signature} there exist $q_{1},\ldots,q_{\indexvar}\in\polyring$
such that\[
p=q_{1}f_{1}+\cdots+q_{\indexvar}f_{\indexvar},\]
and $\lt\left(q_{\indexvar}\right)=\multiplier$. Let $\sig\left(j\right)=\multiplier'\basisvar_{\indexvar}$
and let $S$ be the $S$-poly\-nomial that generated $\poly\left(j\right)$.
By Definitions~\ref{def: list of rewritings} and~\ref{def: rewritable},
there exist $H_{1},\ldots,H_{\indexvar}\in\polyring$ such that\[
S=H_{1}f_{1}+\cdots+H_{\indexvar}f_{\indexvar}\]
and
\begin{itemize}
\item $\lt\left(H_{\indexvar}\right)=\multiplier'$,
\item $\multiplier'\mid u\multiplier$,
\item $\rho=\left(\multiplier',j\right)$ appears in $\rules_{\indexvar}$,
\item and $k\neq j$.
\end{itemize}
\noindent Let $\mathcal{G}=\currbasis\cup\completed$. By Definition~\ref{def: list of rewritings}
and the assumption that $\reduction$ terminates, there exists $\mathcal{H}\in\polyring^{\#\mathcal{G}}$
such that
\begin{itemize}
\item \noindent $\mathcal{H}$ is a signature-preserving $S$-re\-pre\-sen\-ta\-tion
of $S$ w.r.t. $\left\{ \poly\left(\lambda\right):\;\lambda\in\mathcal{G}\right\} $;
and
\item \noindent $\mathcal{H}_{j}=1$.
\end{itemize}
Let $d$ be a monomial such that $d\multiplier'=u\multiplier$. Thus
$d\sig\left(j\right)=u\sig\left(k\right)$. Let $\alpha=\lc\left(h_{\nu}\right)$
and $\beta=\lc\left(H_{\nu}\right)$. Note that $\beta\neq0$, since
it comes from an assigned signature. Then\begin{align}
p & =\left[\left(q_{1}f_{1}+\cdots+q_{\nu}f_{\nu}\right)-\frac{\alpha}{\beta}dS\right]+\frac{\alpha}{\beta}dS\nonumber \\
 & =\left[\sum_{\lambda=1}^{\nu}\left(q_{\lambda}-\frac{\alpha}{\beta}dH_{\lambda}\right)f_{\lambda}\right]+\left[\frac{\alpha}{\beta}d\poly\left(j\right)+\sum_{\lambda\in\mathcal{G}\backslash\left\{ j\right\} }\left(\frac{\alpha}{\beta}d\mathcal{H}_{\lambda}\right)\poly\left(\lambda\right)\right]\nonumber \\
 & =\frac{\alpha}{\beta}d\cdot\poly\left(j\right)+\sum_{\lambda\in\mathcal{G}\backslash\left\{ j\right\} }h_{\lambda}\poly\left(\lambda\right)\label{eq: finally a good exp for p}\end{align}
where\[
h_{\lambda}=\begin{cases}
q_{\lambda}-\frac{\alpha}{\beta}d\left(H_{\lambda}-\mathcal{H}_{\lambda}\right),\quad & \mbox{if }\poly\left(\lambda\right)=f_{k}\mbox{ for some }k=1,\ldots,\nu;\\
\frac{\alpha}{\beta}d\mathcal{H}_{\lambda} & \mbox{otherwise.}\end{cases}\]
Recall that\[
\lt\left(q_{\indexvar}\right)=u\multiplier=\lt\left(\frac{\alpha}{\beta}d\cdot H_{\indexvar}\right)\]
and since $\mathcal{H}$ is signature-preserving\[
\lt\left(\frac{\alpha}{\beta}d\cdot\mathcal{H}_{\lambda}\right)\sig\left(\lambda\right)\siglt d\multiplier'\basisvar_{\indexvar}=u\multiplier\basisvar_{\indexvar}\quad\forall\lambda\in\mathcal{G}\backslash\left\{ j\right\} .\]
Thus for any $\lambda\in\mathcal{G}\backslash\left\{ j\right\} $
if $h_{\lambda}\neq0$ then $\lt\left(h_{\lambda}\right)\sig\left(\lambda\right)\siglt u\sig\left(k\right)$.
Recall that $d\sig\left(j\right)=u\sig\left(k\right)$. Let $c=\alpha/\beta$;
then equation~\eqref{eq: finally a good exp for p} satisfies the
proposition.
\end{proof}
We stumbled on Lemma~\ref{lem: same sig rewritable} while trying
to resolve a question that arose in our study of the pseudocode of
\citep{Fau02Corrected} and \citep{Stegers2006}. Among the criteria
that they use to define a \emph{normalized critical pair}, they mention
that the signatures of the corresponding polynomial multiples must
be different. However, their pseudocodes for $\critpair$ do not check
for this! This suggests that they risk generating at least a few critical
pairs that are not normalized, but we have found that this does not
occur in practice. Why not?
\begin{lem}
\label{lem: same sig rewritable}Let $k,\ell\in\currbasis$ with $k>\ell$.
Let $p=\poly\left(k\right)$, $q=\poly\left(\ell\right)$, and $u,v\in\mathbb{M}$.
If $u\sig\left(k\right)=v\sig\left(\ell\right)$, then $v\poly\left(\ell\right)$
is rewritable.\end{lem}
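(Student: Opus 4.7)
The plan is to show that $\findrewriting(v,\ell)$ cannot return $\ell$, which by Proposition~\ref{pro: algorithm rewritable detects rewritables} is equivalent to $v\poly(\ell)$ being rewritable.

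First I would unpack the signature equation. Writing $\sig(k)=\multiplier_k\basisvar_{\indexvar_k}$ and $\sig(\ell)=\multiplier_\ell\basisvar_{\indexvar_\ell}$, the hypothesis $u\sig(k)=v\sig(\ell)$ forces $\indexvar_k=\indexvar_\ell$ (a common index $\indexvar$) and $u\multiplier_k=v\multiplier_\ell$, so that $\multiplier_k$ divides $v\multiplier_\ell$. Thus $\poly(k)$ is, on the face of it, a potential rewriter of $v\poly(\ell)$ in $\rules_\indexvar$, \emph{provided its signature was actually logged into $\rules_\indexvar$ by $\addrule$.}

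Next I would rule out the possibility that $\poly(k)$ is one of the input generators. If $\poly(k)$ were the initial generator with signature $\basisvar_\indexvar$, then its position $k$ in $\lp$ is the one at which that generator was appended; every subsequently appended polynomial whose signature carries the index $\indexvar$ --- in particular $\poly(\ell)$, since $\sig(\ell)$ has index $\indexvar$ --- can only be appended to $\lp$ after that moment, forcing $\ell>k$ and contradicting the hypothesis. Hence $\poly(k)$ was produced by $\spol$ or by the unsafe branch of $\topreduction$, and Proposition~\ref{pro: top-reds recorded in rules} (equivalently, inspection of $\addrule$) guarantees that the pair $(\multiplier_k,k)$ lies in $\rules_\indexvar$.

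The final step is a run-through of $\findrewriting(v,\ell)$. It scans $\rules_\indexvar$ from the most recently appended entry backwards, returning the first $(\multiplier_j,j)$ satisfying $\multiplier_j\mid v\multiplier_\ell$, and only falls back to $\ell$ if the scan exhausts the list. Since $(\multiplier_k,k)$ already witnesses this divisibility, the fallback is unreachable. Rules are appended to $\rules_\indexvar$ in the same chronological order as polynomials are appended to $\lp$, and $k>\ell$, so $(\multiplier_k,k)$ sits strictly later in $\rules_\indexvar$ than any hypothetical entry $(\multiplier_\ell,\ell)$; any entry scanned before $(\multiplier_k,k)$ corresponds to an index strictly greater than $k>\ell$ (or, in F5C, to the phantom index $0$). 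In every scenario the returned index differs from $\ell$, so $\rewritable(v,\ell)$ evaluates to \textbf{true}. The only delicate point I expect is the bookkeeping verification that rules appear in $\rules_\indexvar$ in the order their underlying polynomials are added to $\lp$; once this chronology is confirmed by inspecting the $\addrule$ calls in $\spol$ and $\topreduction$, the "rewriter sits later than the rewritable" argument closes the proof.
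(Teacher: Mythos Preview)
Your proposal is correct and follows essentially the same route as the paper's proof: both argue that the common signature index forces $\poly(k)$ to have been recorded in $\rules_{\indexvar}$ (since $k>\ell$ rules out $k$ being the input generator at that index), and then that $\findrewriting(v,\ell)$ must hit $(\multiplier_k,k)$ or something even later, hence cannot return $\ell$. Your write-up is simply a more explicit unpacking of the paper's compressed two-sentence argument, including the chronology check on $\addrule$ that the paper leaves implicit.
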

\begin{proof}
Assume that $u\sig\left(k\right)=v\sig\left(\ell\right)=\multiplier\basisvar_{\indexvar}$
for some $\multiplier\in\mathbb{M}$, $\indexvar\in\left\{ 1,\ldots,m\right\} $.
Since the signature indices are equal at $\indexvar$ and $k>\ell$,
$p$ is a rewriting of an $S$-poly\-nomial indexed by $\rules_{\indexvar}$,
so $\left(\sig\left(k\right),k\right)$ appears in $\rules_{\indexvar}$
after $\left(\sig\left(\ell\right),\ell\right)$ (assuming that $\left(\sig\left(\ell\right),\ell\right)$
appears at all, which it will not if $\ell=\indexvar$). Hence $\findrewriting\left(v,\ell\right)\neq\ell$,
$\rewritable\left(v,\ell\right)=\verb|true|$, and $v\sig\left(\ell\right)$
is rewritable.
\end{proof}

\subsection{\label{sub:Rewriting,-syzygies,-and}New Characterization of a Gr\"obner
Basis.}
\begin{defn}
\label{def:syzygy}A \emph{syzygy} of $F$ is some $\mathbf{H}\in\polyring^{m}$
such that $\mathbf{H}\cdot F=H_{1}f_{1}+\cdots+H_{m}f_{m}=0$.\end{defn}
\begin{prop}
\label{prop:non-min sig only if syz}Suppose that $\multiplier\basisvar_{\indexvar}$
is a signature of some $p\in\polyring$, and $\mathbf{h}$ a corresponding
$F$-re\-pre\-sen\-ta\-tion. If $\multiplier\basisvar_{\indexvar}$
is not the minimal signature of $p$, then there exists a syzygy $\mathbf{H}$
of $F$ satisfying each of the following:
\begin{lyxlist}{(B)}
\item [{(A)}] $\multiplier\basisvar_{\indexvar}$ is a signature of $\mathbf{H}\cdot F$,
and
\item [{(B)}] $\left(\mathbf{h}-\mathbf{H}\right)$ is an $F$-representation
of $p$ corresponding to the minimal signature.
\end{lyxlist}
\end{prop}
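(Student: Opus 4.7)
The plan is to construct the syzygy $\mathbf{H}$ as the difference between the given representation $\mathbf{h}$ and a representation of $p$ that realizes the minimal signature.

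Because $\siglt$ is a well-ordering on signatures, $p$ admits a minimal signature, say $\multiplier'\basisvar_{\indexvar'}$, together with a corresponding $F$-representation $\mathbf{h}^*$. By hypothesis, $\multiplier'\basisvar_{\indexvar'}\siglt\multiplier\basisvar_{\indexvar}$. Set $\mathbf{H}:=\mathbf{h}-\mathbf{h}^*$. Since both $\mathbf{h}$ and $\mathbf{h}^*$ are $F$-representations of the same polynomial, $\mathbf{H}\cdot F=p-p=0$, so $\mathbf{H}$ is a syzygy of $F$. Statement (B) is then immediate: $\mathbf{h}-\mathbf{H}=\mathbf{h}^*$, which by construction is an $F$-representation of $p$ corresponding to the minimal signature.

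The substantive step is (A): showing that $\multiplier\basisvar_{\indexvar}$ is a signature of $\mathbf{H}\cdot F$ via the $F$-representation $\mathbf{H}$ itself. I would split into the two cases built into the definition of $\siglt$. If $\indexvar'<\indexvar$, then $h^*_{\indexvar}=h^*_{\indexvar+1}=\cdots=h^*_m=0$, so $H_j=h_j$ for $j=\indexvar$ and $H_j=0$ for $j>\indexvar$; in particular $\lt(H_{\indexvar})=\lt(h_{\indexvar})=\multiplier$. If $\indexvar'=\indexvar$ and $\multiplier'\ordlt\multiplier$, then $H_j=0$ for $j>\indexvar$ holds again from the vanishing of $h_j$ and $h^*_j$, and for $H_{\indexvar}=h_{\indexvar}-h^*_{\indexvar}$ the leading term $\multiplier$ of $h_{\indexvar}$ is not canceled by the strictly smaller leading term $\multiplier'$ of $h^*_{\indexvar}$, so again $\lt(H_{\indexvar})=\multiplier$. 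In either case, the $F$-representation $\mathbf{H}$ of $0$ satisfies the conditions of Definition~\ref{def:signature} with signature $\multiplier\basisvar_{\indexvar}$.

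The only subtlety, and hence the main obstacle if one exists at all, is the case split on $\siglt$ and the observation that the leading monomial $\multiplier$ in $H_{\indexvar}$ survives the subtraction; everything else follows from bookkeeping in the definition of a signature and from the well-ordering of $\siglt$.
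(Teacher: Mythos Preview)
Your proof is correct and follows essentially the same route as the paper: define $\mathbf{H}$ as the difference between the given representation $\mathbf{h}$ and a representation $\mathbf{h}^{*}$ realizing the minimal signature, then verify (A) and (B) directly. Your case split on $\indexvar'<\indexvar$ versus $\indexvar'=\indexvar$ for checking $\lt(H_{\indexvar})=\multiplier$ is slightly more explicit than the paper's presentation, but the argument is the same.
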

\begin{proof}
Assume that $\multiplier\basisvar_{\indexvar}$ is not the minimal
signature of $p$. Suppose that $\othermultiplier\basisvar_{\otherindexvar}$
is the minimal signature of $p$. Then $\otherindexvar\leq\indexvar$.
By definition of a signature, there exists $\mathbf{h}\in\polyring^{m}$
such that\[
p=h_{1}f_{1}+\cdots+h_{\indexvar}f_{\indexvar},\]
and $\lt\left(h_{\indexvar}\right)=\multiplier$. Likewise, there exists $\mathbf{h}'\in\polyring^{m}$
such that\[
p=h_{1}'f_{1}+\cdots+h_{\otherindexvar}'f_{\otherindexvar},\]
and $\lt\left(h'_{\otherindexvar}\right)=\othermultiplier$. Let\[
H_{\lambda}=\begin{cases}
h_{\lambda}-h_{\lambda}',\quad & 1\leq\lambda\leq\otherindexvar\\
h_{\lambda}, & \otherindexvar<\lambda\leq\indexvar\\
0, & \indexvar<\lambda\leq m\end{cases}\]
for each $\lambda=1,2,\ldots,m$; then\[
0=p-p=\sum_{\lambda=1}^{m}H_{\lambda}f_{\lambda}.\]
Let $\mathbf{H}=\left(H_{1},\ldots,H_{m}\right)$; observe that
\begin{itemize}
\item $\mathbf{H}$ is a syzygy of $F$;
\item $\othermultiplier\basisvar_{\otherindexvar}\siglt\multiplier\basisvar_{\indexvar}$
implies that

\begin{itemize}
\item $h_{\otherindexvar+1}-H_{\otherindexvar+1}=\cdots=h_{\indexvar}-H_{\indexvar}=0$
and $\lt\left(h_{\otherindexvar}-H_{\otherindexvar}\right)=\lt\left(h_{\otherindexvar}'\right)=\othermultiplier$;
\item $\lt\left(H_{\indexvar}\right)=\multiplier$, so $\multiplier\basisvar_{\indexvar}$
is a signature of $\mathbf{H}\cdot F$, satisfying (A); so
\item $\mathbf{h}-\mathbf{H}=\mathbf{h'}$, satisfying (B).
\end{itemize}
\end{itemize}
\end{proof}
Inspection of the algorithms that assign signatures to polynomials
shows that F5 \emph{attempts} to assign the minimal signature with
respect to $F$ of each labeled polynomial in $\lp$:
\begin{itemize}
\item the signature assigned to each $f_{i}$ of the input is $\basisvar_{i}$;
\item the signatures assigned to $S$-poly\-nomials are, by Proposition
\ref{pro:props of sigs}, the smallest one can predict from the information
known; and
\item if top-reduction would increase a polynomial's signature, then $\topreduction$
generates a new $S$-poly\-nomial with that signature, preserving
the signature of the current polynomial.
\end{itemize}
This does not always succeed, but Theorem~\ref{thm:F5 characterization}
implies a benefit.
\begin{thm}
[New characterization]\label{thm:F5 characterization}Suppose that
iteration $i$ of  $\partialbasis$ terminates with output $\currbasis$.
Let $\mathcal{G}=\left(\poly\left(\lambda\right):\;\lambda\in\currbasis\right)$.
If every $S$-poly\-nomial $S$ of $\mathcal{G}$ satisfies (A) or
(B) where
\begin{lyxlist}{(B)}
\item [{(A)}] $S$ has a signature-preserving $S$-re\-pre\-sen\-ta\-tion
with respect to $\mathcal{G}$;
\item [{(B)}] a component $u\poly\left(k\right)$ of $S$ satisfies

\begin{lyxlist}{(A2)}
\item [{(B1)}] $u\sig\left(k\right)$ has signature index $i$ but is not
the minimal signature of $u\poly\left(k\right)$; or
\item [{(B2)}] $u\sig\left(k\right)$ is rewritable in $\rules$;
\end{lyxlist}
\end{lyxlist}
\noindent then $\mathcal{G}$ is a Gr\"obner basis of $\left\langle F_{i}\right\rangle $.\end{thm}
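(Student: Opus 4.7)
The plan is to apply characterization (D) of Theorem \ref{thm:equiv cond for GB}: it suffices to show that every $S$-polynomial formed from $\mathcal{G}$ admits an $S$-representation with respect to $\mathcal{G}$. I argue by strong $\siglt$-induction on the assigned signature. Suppose toward contradiction that this fails, and choose a failing $S$-polynomial $S = \SPol(\poly(k),\poly(\ell))$ whose assigned signature $\sigma = u\sig(k)$ is $\siglt$-minimal among failures. Hypothesis (A) would directly supply a signature-preserving $S$-representation, contradicting the choice, so $S$ must satisfy (B), and I handle the two subcases separately.

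For (B2), Proposition \ref{pro: structure of rewriting} rewrites the rewritable component as $u\poly(k) = cd\poly(j) + \sum_\lambda h_\lambda\poly(\lambda)$, where each $\lt(h_\lambda)\sig(\lambda) \siglt \sigma$ and the natural signature of $cd\poly(j)$ equals $\sigma$. Substituting into $S$ yields $S = cd\poly(j) - v\poly(\ell) + \sum_\lambda h_\lambda\poly(\lambda)$; the tail contributes only terms of signature $\siglt \sigma$. The residual combination $cd\poly(j) - v\poly(\ell)$ is a scalar multiple of $\SPol(\poly(j),\poly(\ell))$. Lemma \ref{lem: same sig rewritable} forces $cd\sig(j) \neq v\sig(\ell)$, and the divisibility $\lt(\poly(j)) \mid u\lt(\poly(k))$ (implied by the construction in Proposition \ref{pro: structure of rewriting}) makes $\lcm(\lt(\poly(j)),\lt(\poly(\ell)))$ a proper divisor of the original $\lcm$. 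Hence $\SPol(\poly(j),\poly(\ell))$ is an $S$-polynomial whose associated data is strictly smaller than $\sigma$; by the inductive hypothesis it has an $S$-representation, and assembling the pieces delivers one for $S$, a contradiction.

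For (B1), Proposition \ref{prop:non-min sig only if syz} produces a syzygy $\mathbf{H}$ of $F$ whose signature equals the non-minimal signature $\sigma$ of $u\poly(k)$ and such that subtracting $\mathbf{H}$ from the given $F$-representation yields one of strictly smaller signature. Since $\sigma$ has index $i$ and $\prevbasis$ is already a Gr\"obner basis of $\ideal{F_{i-1}}$, every contribution from indices $<i$ is re-expressible as a $\mathcal{G}$-combination whose summands carry signatures $\siglt \sigma$. Substituting into $S$ and invoking the inductive hypothesis on the resulting smaller-signature $S$-polynomials as in case (B2) again produces an $S$-representation, contradicting minimality.

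The principal obstacle lies in case (B2): one must show that the residual $\SPol(\poly(j),\poly(\ell))$ really does come with signature or $\lcm$ data strictly smaller than $\sigma$, so that the inductive appeal is legitimate. Lemma \ref{lem: same sig rewritable} is the crucial combinatorial tool that rules out the equal-signature pathology, and together with the divisibility data supplied by Proposition \ref{pro: structure of rewriting} and Proposition \ref{pro:rewriter has larger index than rewritable} it closes the induction.
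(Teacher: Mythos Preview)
Your handling of case (B2) has a genuine gap. You assert that Proposition~\ref{pro: structure of rewriting} gives the divisibility $\lt(\poly(j))\mid u\,\lt(\poly(k))$, and from this you conclude that the residual $cd\,\poly(j)-v\,\poly(\ell)$ is a scalar multiple of an $S$-polynomial whose $\lcm$ is strictly smaller. But Proposition~\ref{pro: structure of rewriting} gives only \emph{signature} divisibility ($\multiplier'\mid u\multiplier$ where $\sig(j)=\multiplier'\basisvar_{\indexvar}$); it says nothing about head monomials. The remark immediately following the proposition in the paper warns explicitly that the resulting representation need not be an $\lt(p)$-representation. Consequently the leading terms of $cd\,\poly(j)$ and $v\,\poly(\ell)$ need not cancel, and even when they do, the $\lcm$ need not shrink. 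More fundamentally, the signature of $cd\,\poly(j)$ is \emph{exactly} $\sigma$, so the new intermediate $S$-polynomial (whatever it is) still carries signature $\sigma$, and your $\siglt$-induction does not advance. Lemma~\ref{lem: same sig rewritable} does not rescue this: it rules out an equal-signature pathology between $k$ and $\ell$, not between $j$ and $\ell$, and in any case the failure is that the signature is unchanged, not that two signatures coincide.

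This is precisely why the paper's proof does \emph{not} proceed by a simple induction on signature. It instead runs an iterative rewriting process and proves termination via a triple of invariants $(\mathcal{M},\mathcal{N},\mathcal{O})$: the maximal signature among all intermediate components, the maximal signature among rewritable ones, and the maximal head monomial among those with a signature-preserving $S$-representation. A (B2) rewriting need not lower $\mathcal{M}$, but it does lower $\mathcal{N}$ and, crucially, replaces $\poly(k)$ by $\poly(j)$ with $j>k$ (Proposition~\ref{pro:rewriter has larger index than rewritable}). The finiteness of $\lp$ then blocks an infinite alternation between (A)- and (B2)-rewritings at the same signature level. Your argument is missing this index-in-$\lp$ component of the termination measure; without it, the (B2) case can loop.
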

\begin{rem*}
Faug\`ere and Stegers prove a theorem similar to that of Theorem~\ref{thm:F5 characterization}
(Theorem 1 in \citep{Fau02Corrected}; Theorem~3.21 in \citep{Stegers2006}),
but their formulation of the theorem does not consider (B2), and their
notion of a component's not being ``normalized'' is less general and
not quite the same as (B1).\end{rem*}
\begin{proof}
Let $S$ be any $S$-poly\-nomial of $\mathcal{G}$, and $t$ the
head term of either component of $S$. The components of $S$ define
a $\mathcal{G}$-re\-pre\-sen\-ta\-tion $\mathbf{h}$ of $S$.
This initial $\mathbf{h}$ is not an $S$-re\-pre\-sen\-ta\-tion
of $S$; we will rewrite $\mathbf{h}$ repeatedly until it is. As
long as it is not, we know that there exist $j,k$ such that $\lt\left(h_{j}\mathcal{G}_{j}\right)=\lt\left(h_{k}\mathcal{G}_{k}\right)\ordge t$;
any such pair corresponds to what we call ``intermediate $S$-poly\-nomials'':
\begin{enumerate}
\item \label{enu: iteration 1}Let $S'$ be the intermediate $S$-poly\-nomial
whose natural signature is maximal among all natural signatures of
intermediate $S$-poly\-nomials. There may be a choice of $S$-poly\-nomials;
if so, choose $j,k\in\bases_{i}$ such that for any other $\ell\in\bases_{i}$
such that $\lt\left(h_{\ell}\mathcal{G}_{\ell}\right)=\lt\left(h_{j}\mathcal{G}_{j}\right)=\lt\left(h_{k}\mathcal{G}_{k}\right)$,
we have $\lt\left(h_{j}\right)\sig\left(j\right)>\lt\left(h_{k}\right)\sig\left(k\right)>\lt\left(h_{\ell}\right)\sig\left(\ell\right)$.
Then:

\begin{itemize}
\item If $S'$ satisfies (A), use a signature-preserving $S$-representation
to rewrite $\mathbf{h}$.
\item If a component of $S'$ satisfies (B1), use the syzygy identified
by Proposition~\ref{prop:non-min sig only if syz} to rewrite $\mathbf{h}$
with the minimal signature.
\item If a component of $S'$ satisfies (B2), use Lemma~\ref{pro: structure of rewriting}
with the rewriter of maximal index in $\rules$ to rewrite $u\poly\left(k\right)$,
and thus $\mathbf{h}$.
\end{itemize}
\item Is the rewritten $\mathbf{h}$ an $S$-re\-pre\-sen\-ta\-tion
of $S$? If so, stop. If not, there exist intermediate $S$-polynomials
in the $\mathcal{G}$-representation of $S$. Return to \eqref{enu: iteration 1}.
\end{enumerate}
\noindent We claim that the iterative process outlined above terminates
with an $S$-representation of $S$. Why? Let $\mathcal{M}$ be the
larger natural signature of a component of $S'$, and $\mathcal{N}$
the smaller natural signature of a component of $S'$.
\begin{itemize}
\item In case (A), the signature-preserving representation guarantees that
any component of a newly introduced intermediate $S$-poly\-nomial
has a natural signature smaller than $\mathcal{M}$, except possibly
one, $d\sig\left(\ell\right)$ for some $\ell\in\bases_{i}$ and some
$d\in\termset$. By Definition~\ref{def:sig-preserving S-rep}, $\ell>k$,
where $k$ is the largest index in $\bases_{i}$ of a generator of
$S'$. 
\item If in case (B1), Lemma~\ref{prop:non-min sig only if syz} implies
that the component is rewritten with a lower signature.
\item If in case (B2), suppose, without loss of generality, that $u\poly\left(k\right)$
is the component of $S'$ that is rewritable. Denote its rewriter
by $\poly\left(\ell\right)$ for some $\ell\in\bases_{i}$. Lemma~\ref{pro: structure of rewriting}
implies any polynomials introduced by the rewriting have smaller signature
than $u\sig\left(k\right)$ except $d\poly\left(\ell\right)$, where
$d\in\termset$ such that $d\sig\left(\ell\right)=u\sig\left(k\right)$.
By Definitions~\ref{def: list of rewritings} and~\ref{def: rewritable},
$\ell>k$. We chose the rewriter of maximal index in $\rules$, so
$d\poly\left(\ell\right)$ is not itself rewritable.
\end{itemize}
In most cases, $\mathcal{M}$ will not increase. There is one exception:%
\footnote{Thanks to Vasily Galkin for pointing out this exception.%
} if a (B1) or (B2) rewriting is applied to the component with natural
signature $\mathcal{N}$, it may happen that the head term of the
component corresponding to $\mathcal{M}$ cancels a \emph{non}-head
term of a polynomial $\mathcal{G}_{\lambda}$ introduced by the rewriting.
In this case, let $S''$ be the intermediate $S$-poly\-nomial whose
natural signature $\mathcal{M}'$ is maximal among all natural signatures
of intermediate $S$-poly\-nomials. This gives rise to a possible
recursion that nevertheless terminates; after all, $\mathcal{M}'\siglt\mathcal{M}$
and $\siglt$ is a well-ordering. Moreover, once we return to an intermediate
$S$-poly\-nomial of natural signature $\mathcal{M}$, then if we
were in case (B1), the corresponding value of $\mathcal{N}$ is smaller,
whereas in case (B2), the index $k$ of the component with natural
signature $\mathcal{N}$ is larger. By the well-ordering property
of $\siglt$, $\mathcal{N}$ can increase only finitely many times.
By the assumption that $\partialbasis$ terminated, $\bases_{i}$
is finite, so $k$ can increase only finitely many times. Hence $\mathcal{M}$
can increase to any previous value only finitely many times.

So each iteration either decreases one of $\mathcal{M}$ or $\mathcal{N}$,
or increases the index in $\lp$ of the polynomial with natural signature
$\mathcal{M}$ or $\mathcal{N}$. The choice of maximal index in a
(B2) rewriting implies that (B2) can be applied at most once for each
value of $\mathcal{M}$ or $\mathcal{N}$. Since $\mathcal{G}$ is
finite, the index of the component of natural signature $\mathcal{M}$
or $\mathcal{N}$ cannot increase indefinitely. Both (A) and (B2)
rewritings increase that index, so eventually any intermediate $S$-poly\-nomial
with natural signature $\mathcal{M}$ must have a signature-preserving
representation. In other words, $\mathcal{M}$ must eventually decrease
permanently below any given level.

By the well-ordering property of $\siglt$, $\mathcal{M}$ cannot
decrease indefinitely. Hence the iteration must terminate with an
$S$-repre\-sen\-tation of $S'$. Since $S$ was an arbitrary $S$-poly\-nomial
of $\mathcal{G}$, it must be that $\mathcal{G}$ is a Gr\"obner
basis of $\ideal{F_{i}}$.
\end{proof}

\subsection{\label{sub:Principal-syzygies}Principal Syzygies}

\label{rem:min sigs}Suppose that all syzygies of $F$ are generated
by principal syzygies of the form $f_{i}\basisvar_{j}-f_{j}\basisvar_{i}$.
If $\sig\left(k\right)$ is not minimal, then by Proposition~\ref{prop:non-min sig only if syz}
some monomial multiple of a principal syzygy $\mu\left(f_{i}\basisvar_{j}-f_{j}\basisvar_{i}\right)$
has the same signature as $\sig\left(k\right)$. This provides an
easy test for such a non-minimal signature.
\begin{defn}
We say that a polynomial multiple $u\poly\left(k\right)$ \emph{satisfies
Faug\`ere's criterion with respect to }$\prevbasis$ if
\begin{itemize}
\item $\sig\left(k\right)=\multiplier\basisvar_{\indexvar}$; and
\item there exists $\ell\in\prevbasis$ such that

\begin{itemize}
\item $\sig\left(\ell\right)=\othermultiplier\basisvar_{\otherindexvar}$
where $\otherindexvar<\indexvar$; and
\item $\lt\left(\poly\left(\ell\right)\right)$ divides $u\multiplier$.
\end{itemize}
\end{itemize}
\end{defn}
\begin{prop}
\label{pro: reduction gives smaller signatures}If a polynomial multiple
$u\poly\left(k\right)$ satisfies Faugere's criterion with respect
to $\prevbasis$ then $u\sig\left(k\right)$ is not the minimal signature
of $u\poly\left(k\right)$.\end{prop}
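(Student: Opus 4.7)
The plan is to exhibit an explicit $F$-representation of $u\poly(k)$ whose signature is strictly smaller than $u\sig(k) = (u\tau)\basisvar_\nu$, via a principal syzygy built from $\poly(\ell)$ and $f_\nu$. This matches the philosophy of Proposition~\ref{prop:non-min sig only if syz}: when a non-minimal signature appears, it arises because the witnessing syzygy is hidden in the representation.

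First I would unpack the hypotheses. Since $\sig(k)=\multiplier\basisvar_\indexvar$ is a signature of $\poly(k)$ (Proposition~\ref{pro:red to pr over smaller sigs}), there exist $h_1,\dots,h_\indexvar\in\polyring$ with $\poly(k)=h_1 f_1+\cdots+h_\indexvar f_\indexvar$ and $\lt(h_\indexvar)=\multiplier$; multiplying by $u$ gives a representation of $u\poly(k)$ whose $\indexvar$-th component has head monomial $u\multiplier$ and whose components of index $>\indexvar$ vanish, so $(u\multiplier)\basisvar_\indexvar$ is indeed a signature. Likewise $\poly(\ell)=H_1 f_1+\cdots+H_{\otherindexvar}f_{\otherindexvar}$ with $\lt(H_{\otherindexvar})=\othermultiplier$, and by hypothesis $\lt(\poly(\ell))$ divides $u\multiplier$.

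Next I would construct the syzygy. The trivial identity $\poly(\ell)\,f_\indexvar - f_\indexvar\,\poly(\ell)=0$ expands, using the representation of $\poly(\ell)$, into a syzygy $\mathbf{S}$ of $F$ whose $\indexvar$-th component is $\poly(\ell)$, whose $j$-th component (for $j\le \otherindexvar$) is $-f_\indexvar H_j$, and whose remaining components are zero (this uses $\otherindexvar<\indexvar$, so the $\indexvar$-th slot is not overwritten). Let $w=u\multiplier/\lt(\poly(\ell))\in\mathbb{M}$ and $c=\lc(h_\indexvar)/\lc(\poly(\ell))\in\mathbb{F}$. Then $cw\mathbf{S}$ is still a syzygy of $F$, its $\indexvar$-th entry is $cw\poly(\ell)$ with head monomial $u\multiplier$ and head coefficient $\lc(h_\indexvar)$, and its entries of index $>\indexvar$ vanish.

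Finally I would subtract $cw\mathbf{S}$ from the representation $u\mathbf{h}$ of $u\poly(k)$. The difference is still an $F$-representation of $u\poly(k)$, its components of index $>\indexvar$ remain zero, and its $\indexvar$-th component $uh_\indexvar-cw\poly(\ell)$ either is zero or has head monomial strictly $\ordlt u\multiplier$ (by construction of $c$). In either case the signature of this new representation is strictly $\siglt(u\multiplier)\basisvar_\indexvar$, so $u\sig(k)$ is not the minimal signature of $u\poly(k)$. The only delicate point is verifying that the subtraction really drops the signature rather than merely rearranging it, and this is immediate from the choice of $c$ and the fact that no component of index exceeding $\indexvar$ is introduced; the rest is bookkeeping. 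One may alternatively invoke Proposition~\ref{prop:non-min sig only if syz} directly on $w\mathbf{S}$ to shorten the argument, but writing the cancellation explicitly makes the role of Faug\`ere's criterion transparent.
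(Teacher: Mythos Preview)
Your argument is correct and follows essentially the same route as the paper: both proofs exploit the principal syzygy $\poly(\ell)\,f_\indexvar - f_\indexvar\,\poly(\ell)=0$, expanded through the $F$-representation of $\poly(\ell)$, to cancel the leading term of the $\indexvar$-th component of $u\mathbf{h}$ and so exhibit an $F$-representation of $u\poly(k)$ with strictly smaller signature. Your packaging of this as ``subtract a scalar multiple of a syzygy'' is a bit cleaner than the paper's direct manipulations (and you track the leading coefficient~$c$ explicitly, which the paper glosses over), but the underlying computation is the same.
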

\begin{proof}
Assume that a polynomial multiple $u\poly\left(k\right)$ satisfies
Faugere's criterion with respect to $\prevbasis$. Let $p=\poly\left(k\right)$
and $\multiplier\basisvar_{\indexvar}=\sig\left(k\right)$, so there
exists $\mathbf{h}\in\polyring^{m}$ such that\[
p=h_{1}f_{1}+\cdots+h_{m}f_{m},\]
$h_{\indexvar+1}=\cdots=h_{m}=0$, and $\lt\left(h_{\indexvar}\right)=\multiplier$.
From the definition of Faug\`ere's criterion, there exists $\ell\in\prevbasis$
such that $\lt\left(\poly\left(\ell\right)\right)$ divides $u\multiplier$.
Let $q=\poly\left(\ell\right)$. Since $\ell\in\prevbasis$, there
exists $\mathbf{H}\in\polyring^{m}$ such that $\otherindexvar<\indexvar$,\[
q=H_{1}f_{1}+\cdots+H_{m}f_{m},\]
$H_{\otherindexvar+1}=\cdots=H_{m}=0$, and $H_{\indexvar'}\neq0$.
Choose $d\in\mathbb{M}$ such that $d\cdot\lt\left(q\right)=u\multiplier$.
Observe that\begin{align}
up & =u\left(h_{1}f_{1}+\cdots+h_{\indexvar}f_{\indexvar}\right)\nonumber \\
 & =u\left[\sum_{\lambda=1}^{\indexvar-1}h_{\lambda}f_{\lambda}+\left(h_{\indexvar}-\lt\left(h_{\indexvar}\right)\right)\cdot f_{\indexvar}\right]+u\lt\left(h_{\indexvar}\right)f_{\indexvar}.\label{eq: trivial syz 1}\end{align}
Let\[
P=u\left[\sum_{\lambda=1}^{\indexvar-1}h_{\lambda}f_{\lambda}+\left(h_{\indexvar}-\lt\left(h_{\indexvar}\right)\right)\cdot f_{\indexvar}\right];\]
equation \eqref{eq: trivial syz 1} becomes\begin{align}
up & =P+u\cdot\lt\left(h_{\indexvar}\right)f_{\indexvar}\nonumber \\
 & =P+\left(u\multiplier\right)\cdot f_{\indexvar}\nonumber \\
 & =P+\left(d\cdot\lt\left(q\right)\right)\cdot f_{\indexvar}\nonumber \\
 & =P+d\cdot\left(\lt\left(\sum_{\lambda=1}^{\otherindexvar}H_{\lambda}f_{\lambda}\right)\right)\cdot f_{\indexvar}.\label{eq: trivial syz 2}\end{align}
By the distributive and associative properties\[
\left(\sum_{\lambda=1}^{\otherindexvar}H_{\lambda}f_{\lambda}\right)f_{\indexvar}=\sum_{\lambda=1}^{\otherindexvar}f_{\lambda}\left(H_{\lambda}f_{\indexvar}\right),\]
so\[
\lt\left(\sum_{\lambda=1}^{\otherindexvar}H_{\lambda}f_{\lambda}\right)f_{\indexvar}=\sum_{\lambda=1}^{\otherindexvar}f_{\lambda}\left(H_{\lambda}f_{\indexvar}\right)-\left[\left(\sum_{\lambda=1}^{\otherindexvar}H_{\lambda}f_{\lambda}\right)-\lt\left(\sum_{\lambda=1}^{\otherindexvar}H_{\lambda}f_{\lambda}\right)\right]f_{\indexvar}.\]
Let\[
Q=\sum_{\lambda=1}^{\otherindexvar}f_{\lambda}\left(H_{\lambda}f_{\indexvar}\right)\quad\mbox{and}\quad R=\left[\left(\sum_{\lambda=1}^{\otherindexvar}H_{\lambda}f_{\lambda}\right)-\lt\left(\sum_{\lambda=1}^{\otherindexvar}H_{\lambda}f_{\lambda}\right)\right].\]
We can rewrite equation \eqref{eq: trivial syz 2} as\begin{align*}
up & =P+dQ-\left(dR\right)\cdot f_{\indexvar}.\end{align*}
We claim that we have rewritten $up$ with a signature smaller than
$\left(u\multiplier\right)\basisvar_{\indexvar}$. By construction,
$P$ has a signature smaller than $\left(u\multiplier\right)\basisvar_{\indexvar}$.
By inspection, $Q$ has a signature index no greater than $\indexvar'$,
so $dQ$ has a signature smaller than $\left(u\multiplier\right)\basisvar_{\indexvar}$.
That leaves $\left(dR\right)\cdot f_{\indexvar}$, and\begin{align*}
\lt\left(dR\right) & =d\cdot\lt\left[\left(\sum_{\lambda=1}^{\indexvar'}H_{\lambda}f_{\lambda}\right)-\lt\left(\sum_{\lambda=1}^{\indexvar'}H_{\lambda}f_{\lambda}\right)\right]\\
 & =d\cdot\lt\left(q-\lt\left(q\right)\right)\\
 & \ordlt d\cdot\lt\left(q\right)\\
 & \qquad=u\multiplier.\end{align*}
Hence $\left(dR\right)\cdot f_{\indexvar}$ has a signature smaller
than $\left(u\multiplier\right)\basisvar_{\indexvar}$, and $up$
has a signature smaller than $\left(u\multiplier\right)\basisvar_{\indexvar}$.
That is, $u\sig\left(k\right)$ is not the minimal signature of $up$.
\end{proof}
If a polynomial multiple $u\poly\left(k\right)$ satisfies Faugere's
criterion with respect to $\prevbasis$, then by Proposition~\ref{pro: reduction gives smaller signatures}
and Theorem~\ref{thm:F5 characterization} we need not compute it.
$\critpair$ and $\findreductor$ discard any polynomial multiple
that satisfies Faug\`ere's criterion. Thus Theorem~\ref{thm:F5 characterization}
and Proposition~\ref{pro: reduction gives smaller signatures} show
that:
\begin{cor}
\label{cor: F5 no red to zero on regular inputs}Given $F$, the output
of the F5 algorithm is a Gr\"obner basis of $\ideal{F}$. Also, if
all the syzygies of $F$ are principal, then F5 does not reduce any
polynomials to zero.
\end{cor}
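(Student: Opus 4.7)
The plan is to prove both assertions by induction on the loop variable $i$ in $\basisoriginal$, invoking Theorem~\ref{thm:F5 characterization} as the main engine for correctness and Proposition~\ref{pro: reduction gives smaller signatures} together with Proposition~\ref{prop:non-min sig only if syz} for the statement about reductions to zero. The base case $i=1$ is trivial since $\{f_1\}$ is already a Gr\"obner basis of $\ideal{f_1}$. For the inductive step, I would fix the output $\currbasis$ of $\partialbasis(i,\reducedbasis,\prevbasis)$, set $\mathcal{G}=\{\poly(\lambda):\lambda\in\currbasis\}$, and verify that every $S$-polynomial of $\mathcal{G}$ satisfies one of the conditions (A), (B1), (B2) of Theorem~\ref{thm:F5 characterization}, from which the Gr\"obner basis property follows.

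The verification reduces to tracing each critical pair through the subalgorithms. If the pair was processed by $\spol$ and the resulting $S$-polynomial was top-reduced by $\reduction$ and $\topreduction$ until completion, then by Proposition~\ref{pro:red to pr over smaller sigs} and the discussion after Definition~\ref{def: signature-preserving reduction} the reduction (together with any new labeled polynomials it spawned through unsafe top-reductions, which themselves are further reduced) produces a signature-preserving $S$-representation, giving (A). If instead the pair was rejected by $\critpair$ at line~\ref{line: critpair, PS-reducible 1} or~\ref{line: critpair, PS-reducible 2}, the rejected component satisfies Faug\`ere's criterion with respect to $\prevbasis$, so Proposition~\ref{pro: reduction gives smaller signatures} gives (B1). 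If the pair was dropped by $\spol$ at line~\ref{line: spol rewritiable?}, or if $\findreductor$ returned $\emptyset$ on account of rewritability, the component is rewritable in $\rules$ by Proposition~\ref{pro: algorithm rewritable detects rewritables}, giving (B2). With every $S$-polynomial handled, Theorem~\ref{thm:F5 characterization} produces the desired Gr\"obner basis.

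For the second assertion, suppose F5 reduces some $\poly(k)$ to zero, and let $\sig(k)=\multiplier\basisvar_{\indexvar}$. By Proposition~\ref{pro:red to pr over smaller sigs}(C), $\multiplier\basisvar_{\indexvar}$ is a signature of $0$, so there is a syzygy $\mathbf{H}\in\polyring^m$ of $F$ with $\lt(H_{\indexvar})=\multiplier$ and $H_{\indexvar+1}=\cdots=H_m=0$. Under the hypothesis that every syzygy of $F$ is generated by principal syzygies $f_{\otherindexvar}\basisvar_{\indexvar}-f_{\indexvar}\basisvar_{\otherindexvar}$, I would write $\mathbf{H}$ as a $\polyring$-linear combination of such generators and identify the term contributing $\lt(H_{\indexvar})=\multiplier$: it must come from a summand $a\cdot(f_{\otherindexvar}\basisvar_{\indexvar}-f_{\indexvar}\basisvar_{\otherindexvar})$ with $\otherindexvar<\indexvar$ and $\lt(a)\cdot\lt(f_{\otherindexvar})=\multiplier$, so $\lt(f_{\otherindexvar})$ divides $\multiplier$. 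Tracing $\sig(k)$ back through $\spol$ and $\topreduction$, the signature is always obtained as $u\cdot\sig(\ell)$ for some earlier index $\ell$ and some $u\in\mathbb{M}$; the component $u\poly(\ell)$ then satisfies Faug\`ere's criterion with respect to $\prevbasis$ (via the divisor $\lt(f_{\otherindexvar})$ coming from an index below $\indexvar$), so $\critpair$ or $\findreductor$ would have rejected it before the reduction to zero could occur, a contradiction.

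The main obstacle is the syzygy-tracing argument in the second part: the syzygy $\mathbf{H}$ produced by Proposition~\ref{prop:non-min sig only if syz} is not itself a principal syzygy but only a combination of principal syzygies with polynomial coefficients, and one needs to argue carefully that the leading behaviour of $H_{\indexvar}$ still exhibits a divisor $\lt(f_{\otherindexvar})$ with $\otherindexvar<\indexvar$ that matches what Faug\`ere's criterion tests. Once this divisibility is secured, matching it against the particular $u$ and $\ell$ recorded in the chain of signatures that produced $\sig(k)$ is mechanical but requires an induction over the creation history of $\lp_k$.
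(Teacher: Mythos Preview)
Your treatment of the first assertion matches the paper's: both reduce correctness to Theorem~\ref{thm:F5 characterization} by checking that every $S$-polynomial either was reduced (case~(A)), was discarded by $\critpair$ via Faug\`ere's criterion (case~(B1) through Proposition~\ref{pro: reduction gives smaller signatures}), or was discarded as rewritable (case~(B2)). Your account is in fact more explicit than the paper's single sentence.

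The gap is in the second assertion, and it is the one you yourself flag. You claim that when $\mathbf{H}$ is written as a $\polyring$-combination of principal syzygies, the leading term $\tau=\lt(H_{\nu})$ must equal $\lt(a)\cdot\lt(f_{\nu'})$ for some summand $a\cdot(f_{\nu'}\basisvar_{\nu}-f_{\nu}\basisvar_{\nu'})$ with $\nu'<\nu$. This fails in general because of cancellation among summands; for instance with $f_{1}=x^{2}+y^{2}$, $f_{2}=xy$ one has $y^{3}\in\ideal{f_{1},f_{2}}$, yet $y^{3}$ is divisible by neither $\lt(f_{1})=x^{2}$ nor $\lt(f_{2})=xy$. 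So the divisor you want to feed into Faug\`ere's criterion simply need not exist among the $\lt(f_{\nu'})$.

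The repair is to bypass the decomposition entirely. The hypothesis that all syzygies are principal is (for homogeneous input in a polynomial ring) equivalent to $(f_{1},\ldots,f_{m})$ being a regular sequence; in particular $f_{\nu}$ is a non-zero-divisor modulo $\ideal{f_{1},\ldots,f_{\nu-1}}$. From the syzygy one has $H_{\nu}f_{\nu}=-\sum_{j<\nu}H_{j}f_{j}\in\ideal{f_{1},\ldots,f_{\nu-1}}$, hence $H_{\nu}\in\ideal{f_{1},\ldots,f_{\nu-1}}$. Now $\prevbasis$ indexes a Gr\"obner basis of exactly this ideal, so $\tau=\lt(H_{\nu})$ is divisible by $\lt(\poly(\ell))$ for some $\ell\in\prevbasis$. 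That is precisely Faug\`ere's criterion for the signature $\tau\basisvar_{\nu}$, and your final tracing argument (that $\critpair$ or $\findreductor$ already tested this and would have rejected the pair) then goes through without change. The paper's own discussion is brief and leans on the same informal remark you tried to make precise; the clean route is through $H_{\nu}\in\ideal{F_{\nu-1}}$ and the Gr\"obner-basis property of $\prevbasis$, not through divisibility by some $\lt(f_{\nu'})$.
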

Corollary \ref{cor: F5 no red to zero on regular inputs} does \emph{not}
imply:
\begin{itemize}
\item that F5 does not generate redundant polynomials. The example from
\citep{Fau02Corrected} generates one such polynomial ($r_{10}$).
\item that F5 terminates, at least not obviously. To the contrary, $\findreductor$
rejects potential reducers that are rewritable or that satisfy Faug\`ere's
criterion. As a result, the algorithm can compute a Gr\"obner basis,
while new polynomials that are not completely top-reduced continue
to generate new critical pairs. We have not observed an infinite loop in practice.
\end{itemize}

\subsection{\label{sec: Correctness}Correctness of the output of F5C}

We come now to the correctness of F5C. For correctness, we argue that
each stage of F5C imitates the behavior of F5 on an input equivalent
to the data structures generated by $\createreducedbasis$. Recall
that $F_{i}=\left(f_{1},\ldots,f_{i}\right)$. We will refer to the
system $F'=\left(\reducedbasis_{1},\ldots,\reducedbasis_{\#\reducedbasis},f_{i+1}\right)$
where
\begin{itemize}
\item $\reducedbasis$ is computed during the execution of $\createreducedbasis$;
and
\item $F'$ is indexed as $F'_{i}=\reducedbasis_{i}$, etc.
\end{itemize}
It is trivial that $\ideal{B}=\ideal{F_{i}}$ and $\ideal{F'}=\ideal{F_{i+1}}$.
\begin{lem}
\label{lem: admissible wrt F'}When  $\createreducedbasis$ terminates,
every element of $\lp$ is admissible with respect to $\reducedbasis$,
and thus with respect to $F'$.
\end{lem}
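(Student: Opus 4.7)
The plan is to unpack the definitions and verify the claim directly. After $\createreducedbasis$ terminates, the list $\lp$ has been overwritten at the line labeled ``new signature'' so that its entries are exactly $(\basisvar_{j},\reducedbasis_{j})$ for $j=1,\ldots,\#\reducedbasis$. By the definition of admissibility (Definition~\ref{def:admissible}), I need to show, for each such $j$, that $\basisvar_{j}$ is a signature of $\reducedbasis_{j}$ with respect to $\reducedbasis$.

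First I would recall that $\basisvar_{j}$ stands for the pair $(1,j)$, so that by Definition~\ref{def:signature} it suffices to exhibit a $\reducedbasis$-representation $\mathbf{h}$ of $\reducedbasis_{j}$ in which $h_{j+1}=\cdots=h_{\#\reducedbasis}=0$ and $\lt(h_{j})=1$. The obvious witness is the trivial representation with $h_{j}=1$ and $h_{\lambda}=0$ for $\lambda\neq j$; this produces exactly $\reducedbasis_{j}=1\cdot\reducedbasis_{j}$, satisfies the vanishing condition beyond index $j$, and has head monomial $1$ in position $j$. This shows admissibility with respect to $\reducedbasis$.

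For the second assertion, recall that $F'=(\reducedbasis_{1},\ldots,\reducedbasis_{\#\reducedbasis},f_{i+1})$, so the first $\#\reducedbasis$ components of any $F'$-representation coincide with a $\reducedbasis$-representation, with an extra coordinate for $f_{i+1}$. I would simply extend the trivial representation above by setting the $(\#\reducedbasis+1)$-st coordinate to $0$; this is still a valid $F'$-representation with head monomial $1$ in position $j$ and zeros beyond position $j$, so $\basisvar_{j}$ is a signature of $\reducedbasis_{j}$ with respect to $F'$ as well.

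There is no real obstacle here: the lemma is essentially a bookkeeping check that the particular signatures installed by $\createreducedbasis$ are the ``trivial'' ones that are automatically admissible. The only point requiring care is keeping straight which module is meant — the one based on $\reducedbasis$ or the one based on $F'$ — and confirming that a zero-extension transports admissibility from the former to the latter.
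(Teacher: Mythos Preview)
Your proof is correct and is exactly the obvious verification the paper has in mind; the paper itself simply declares that ``the proof is evident from inspection of $\createreducedbasis$'' without writing out the details you have supplied.
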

The proof is evident from inspection of $\createreducedbasis$.

The correctness of the behavior of $\rewritable$ in F5C hinges on
Definition~\ref{def: rewritable by zero}.
\begin{defn}
\label{def: rewritable by zero}Let $t\in\mathbb{M}$ and $k\in\currbasis$.
At any point in the algorithm, we say that a polynomial multiple $t\poly\left(k\right)$\emph{
}is \emph{rewritable by the zero polynomial} if there exist $a,b\in\prevbasis$
such that
\begin{itemize}
\item the $S$-poly\-nomial $S$ of $p=\poly\left(a\right)$ and $q=\poly\left(b\right)$
reduces to zero, although the reduction may not be signature-safe;
and
\item $\max\left(\ltmul{p,q}\sig\left(a\right),\ltmul{q,p}\sig\left(b\right)\right)$
divides $t\sig\left(k\right)$.
\end{itemize}
\end{defn}
\begin{rem*}
It is essential that $a,b\in\prevbasis$ and not in $\currbasis$.
The fact that a component of an $S$-poly\-nomial is rewritable does
\emph{not} imply that it is rewritable by the zero polynomial. Proposition~\ref{pro: structure of rewriting}
implies that if a component of an $S$-poly\-nomial is rewritable,
then the $S$-poly\-nomial can be rewritten using a polynomial of
the same signature; \emph{however,} the resulting $S$-repre\-sen\-tation
may not yet exist when the component is detected to be rewritable.\end{rem*}
\begin{lem}
\label{lem: behavior of is_rewritable equivalent}When  $\createreducedbasis$
terminates,  $\rewritable$ in F5C would return \verb|true| for the
input $\left(u,k\right)$, only if $u\poly\left(k\right)$ is rewritable
by the zero polynomial.\end{lem}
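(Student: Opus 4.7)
The plan is to unpack the state of $\lp$ and $\rules$ immediately after $\createreducedbasis$ terminates, then trace any successful invocation of $\rewritable$ back to a rule installed in that setup. When $\createreducedbasis$ returns, the new $\lp$ satisfies $\lp_j=(\basisvar_j,\reducedbasis_j)$ for $j=1,\ldots,\#\reducedbasis$, so in particular $\sig(j)=\basisvar_j$ (the multiplier part is the unit monomial and the index is $j$). The only entries in $\rules$ are those written on line~\ref{line: new rule} of $\createreducedbasis$: for each pair $1\le j<k\le \#\reducedbasis$ the tuple $(u_{j,k},0)$ has been appended to $\rules_k$, where $u_{j,k}=\lcm(\lt(\reducedbasis_j),\lt(\reducedbasis_k))/\lt(\reducedbasis_k)$.

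Next, I would suppose $\rewritable(u,k)$ returns \verb|true|. By Proposition \ref{pro: algorithm rewritable detects rewritables} and inspection of $\findrewriting$, this means $\rules_\indexvar$ contains some tuple $(\multiplier',j')$ with $j'\neq k$ and $\multiplier'\mid u\multiplier$, where $\sig(k)=\multiplier\basisvar_\indexvar$. But at this moment $\sig(k)=\basisvar_k$, so $\multiplier=1$ and $\indexvar=k$; hence the rule in question lies in $\rules_k$ and satisfies $\multiplier'\mid u$. Because the only rules present are the ones laid down by $\createreducedbasis$, this rule must be $(u_{j,k},0)$ for some $j<k$.

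I would then verify Definition \ref{def: rewritable by zero} directly. Take $a=j$ and $b=k$; both indices belong to $\prevbasis=\{1,\ldots,\#\reducedbasis\}$. The corresponding polynomials are $p=\poly(a)=\reducedbasis_j$ and $q=\poly(b)=\reducedbasis_k$, and since $\reducedbasis$ is (a fortiori) a Gr\"obner basis, Theorem~\ref{thm:equiv cond for GB}(C) guarantees that $\SPol(p,q)$ top-reduces to zero. A direct computation gives $\ltmul{p,q}\sig(a)=\ltmul{\reducedbasis_j,\reducedbasis_k}\basisvar_j$ and $\ltmul{q,p}\sig(b)=u_{j,k}\basisvar_k=\multiplier'\basisvar_k$; since signature indices dominate the ordering $\siglt$ and $j<k$, the maximum of these two is $\multiplier'\basisvar_k$. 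Finally, $\multiplier'\mid u$ yields $\multiplier'\basisvar_k\mid u\basisvar_k=u\sig(k)$, which is exactly the divisibility condition required by Definition \ref{def: rewritable by zero}. Hence $u\poly(k)$ is rewritable by the zero polynomial.

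The main obstacle, such as it is, is purely bookkeeping: one must be careful to note that at the moment $\createreducedbasis$ terminates the multiplier component of every signature is the unit monomial, so that the divisibility test in $\findrewriting$ reduces to the clean condition $\multiplier'\mid u$, and that the ``max'' on signatures is determined by the index alone when two distinct indices are compared. Once these observations are made explicit, the correspondence between a setup rule $(u_{j,k},0)\in\rules_k$ and the pair $(\reducedbasis_j,\reducedbasis_k)$ required by Definition \ref{def: rewritable by zero} is immediate, and the reduced Gr\"obner basis property supplies the zero reduction.
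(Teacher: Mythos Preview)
Your proof is correct and follows the same approach as the paper's own proof, only with considerably more detail: both arguments observe that after $\createreducedbasis$ the only rules present are the $(u_{j,k},0)$ coming from pairs in $\reducedbasis$, and that since $\reducedbasis$ is a Gr\"obner basis every such pair has an $S$-polynomial reducing to zero. The paper compresses this into three sentences, while you spell out the signature bookkeeping and the verification of Definition~\ref{def: rewritable by zero} explicitly.
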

\begin{proof}
\noindent Line~\ref{line: compute reduced basis} of $\createreducedbasis$
interreduces the polynomials indexed by $\prevbasis$ to obtain the
reduced Gr\"obner basis $\reducedbasis$. Thus all $S$-poly\-nomials
of $\reducedbasis$ reduce to zero. When $\createreducedbasis$
terminates, $\rules$ consists of a list of lists. Elements of the
$j$th list have the form $\omega_{k}=\left(\ltmul{\reducedbasis_{j},\reducedbasis_{k}},0\right)$
for $k=1,\ldots,j-1$ where, as explained in the introduction,\[
\ltmul{\reducedbasis_{j},\reducedbasis_{k}}=\frac{\lcm\left(\lt\left(\reducedbasis_{j}\right),\lt\left(\reducedbasis_{k}\right)\right)}{\lt\left(\reducedbasis_{j}\right)}.\]
Thus if $\rewritable\left(u,k\right)$ is true, then $u\poly\left(k\right)$
is rewritable by the zero polynomial.\end{proof}
\begin{cor}
\label{cor: isrewritable still valid}In F5C, if $\rewritable$ returns
\verb|true| for the input $\left(u,k\right)$, then $u\poly\left(k\right)$
is rewritable either by a polynomial that appears in $\lp$, or by
the zero polynomial.\end{cor}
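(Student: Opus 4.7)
The plan is a two-case analysis driven by the origin of the rewriting entry that $\findrewriting$ returns. The key structural observation is that in F5C every element of $\rules$ is installed in exactly one of two ways: either as a phantom entry $(\multiplier,0)$ by $\createreducedbasis$ at Line~\ref{line: new rule}, or as a genuine entry $(\multiplier,j)$ with $j = \#\lp \geq 1$ by a call to $\addrule$ inside $\spol$ or $\topreduction$. The second coordinate of each entry therefore records which of the two situations produced it.

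First I would unfold the hypothesis: if $\rewritable(u,k)$ returns \verb|true|, then inspection of $\rewritable$ and $\findrewriting$ yields an entry $(\multiplier_j, j) \in \rules_{\indexvar}$ (where $\sig(k) = \multiplier_k \basisvar_{\indexvar}$) with $j \neq k$ and $\multiplier_j \mid u\multiplier_k$. I would then split on whether $j = 0$ or $j \geq 1$. The case $j = 0$ is handled by Lemma~\ref{lem: behavior of is_rewritable equivalent}, which gives that $u\poly(k)$ is rewritable by the zero polynomial in the sense of Definition~\ref{def: rewritable by zero}. The case $j \geq 1$ reduces to the corresponding F5 situation: since $\addrule$, $\rewritable$, and $\findrewriting$ have identical code in F5 and F5C, the argument of Proposition~\ref{pro: algorithm rewritable detects rewritables} applies verbatim and shows that $u\poly(k)$ is rewritable by $\poly(j)\in\lp$ in the sense of Definition~\ref{def: rewritable}.

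The only ancillary fact to verify is the claimed dichotomy on the second coordinate, namely that no call to $\addrule$ from $\spol$ or $\topreduction$ ever passes $0$ as its second argument. This is immediate from inspection of both call sites: each passes $\#\lp$ after an $\append$ to $\lp$, and $\lp$ is already nonempty by the time $\basis$ enters its main loop. With this dichotomy in hand, no further mathematical content is needed: the corollary follows from combining Lemma~\ref{lem: behavior of is_rewritable equivalent} (phantom case) with Proposition~\ref{pro: algorithm rewritable detects rewritables} (genuine case). There is no substantive obstacle; the proof is purely a matter of tracing which subalgorithm created the offending entry in $\rules$.
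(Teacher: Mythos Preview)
Your proposal is correct and follows essentially the same route as the paper: a case split on whether $\findrewriting(u,k)$ returns $j=0$ or $j\geq 1$, invoking Lemma~\ref{lem: behavior of is_rewritable equivalent} in the phantom case and Proposition~\ref{pro: algorithm rewritable detects rewritables} in the genuine case. Your explicit verification of the dichotomy on the second coordinate (that only $\createreducedbasis$ ever inserts a $0$) is a little more detailed than the paper's one-line appeal to ``the isolation of all modifications of F5 to $\createreducedbasis$,'' but the argument is the same.
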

\begin{proof}
This is evident from consequence of Proposition~\ref{pro: algorithm rewritable detects rewritables}
and the isolation of all modifications of F5 to $\createreducedbasis$.
Assume that $\rewritable$ returns \verb|true| for $\left(u,k\right)$.
Let $j=\findrewriting\left(u,k\right)$. 
If $j=0$, then $\createreducedbasis$ added $(u,0)$ to $\rules$.
No other algorithm adds a pair of the form $(u,0)$ to $\rules$,
so by Lemma~\ref{lem: behavior of is_rewritable equivalent}, $u\poly(k)$ is
rewritable by the zero polynomial.
Otherwise, line~\ref{line: start to create new rules} of $\createreducedbasis$
implies that $j\in\currbasis\backslash\prevbasis$. That is, $\lp_{j}$
was generated in the same way that F5 would generate it. By Proposition~\ref{pro: algorithm rewritable detects rewritables},
$u\poly\left(k\right)$ is rewritable by $\poly\left(j\right)$.\end{proof}
\begin{thm}
\label{lem: termination implies basis}If $\partialbasisc$ terminates
for a given input $i$, then it terminates with a Gr\"obner basis
of $\ideal{F_{i}}$.\end{thm}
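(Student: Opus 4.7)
The plan is to reduce the correctness of $\partialbasisc$ to an application of Theorem~\ref{thm:F5 characterization}, by regarding the computation as one that F5 performs on the equivalent input $F' = (\reducedbasis_1,\ldots,\reducedbasis_{\#\reducedbasis},f_i)$, where $\reducedbasis$ is the reduced Gr\"obner basis of $\ideal{F_{i-1}}$ produced by the preceding call to $\createreducedbasis$. Since $\ideal{F'}=\ideal{F_i}$, it suffices to show that $\partialbasisc$ returns a Gr\"obner basis of $\ideal{F'}$ with respect to $F'$.

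First I would verify that after $\createreducedbasis$ has installed $\lp$ and $\rules$, the subsequent loop of $\partialbasisc$ behaves as F5 would on $F'$. Lemma~\ref{lem: admissible wrt F'} gives admissibility of every element of $\lp$ with respect to $F'$, and the invariants of Proposition~\ref{pro:red to pr over smaller sigs} then carry over through all $\spol$, $\topreduction$, and $\reduction$ calls. Reduction by $\reducedbasis$ inside $\normalform$ is harmless because $\reducedbasis$ is a Gr\"obner basis of $\ideal{F'_1,\ldots,F'_{\#\reducedbasis}}$ and the only fresh signature index introduced in the loop is $\#\reducedbasis+1$.

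Assume now that $\partialbasisc$ terminates with output $\currbasis$, and set $\mathcal{G}=\{\poly(\lambda):\lambda\in\currbasis\}$. To invoke Theorem~\ref{thm:F5 characterization} I need to check that every $S$-polynomial of $\mathcal{G}$ satisfies (A), (B1), or (B2). Pairs processed to completion by $\reduction$ yield a signature-preserving $S$-representation, which is (A). Pairs discarded by $\critpair$ at line~\ref{line: critpair, PS-reducible 1} or~\ref{line: critpair, PS-reducible 2} satisfy Faug\`ere's criterion with respect to $\prevbasis$, and Proposition~\ref{pro: reduction gives smaller signatures} hands us (B1). For pairs discarded by $\spol$ at line~\ref{line: spol rewritiable?} because some component $u\poly(k)$ is rewritable, Corollary~\ref{cor: isrewritable still valid} splits the situation: either $u\poly(k)$ is rewritable by a polynomial that already lives in $\lp$, which is exactly (B2), or it is rewritable by the zero polynomial, a case new to F5C.

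The main obstacle will be this rewritable-by-zero subcase. By Definition~\ref{def: rewritable by zero} there exist $a,b\in\prevbasis$ with $p=\poly(a)$, $q=\poly(b)$ such that $\SPol(p,q)$ reduces to zero and $\mu=\max_{\siglt}\{\ltmul{p,q}\sig(a),\ltmul{q,p}\sig(b)\}$ divides $u\sig(k)$. Since $\reducedbasis$ is already a Gr\"obner basis, every such $S$-polynomial genuinely reduces to zero, and the signature-preserving reduction provides a syzygy $\mathbf{H}$ of $F'$ whose signature is $\mu$. Multiplying $\mathbf{H}$ by the monomial $d$ for which $d\mu=u\sig(k)$ yields a syzygy of signature $u\sig(k)$; subtracting it from any $F'$-representation of $u\poly(k)$ that realizes the signature $u\sig(k)$ produces one of strictly smaller signature, so $u\sig(k)$ is not the minimal signature of $u\poly(k)$, which places this component under (B1). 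With (A), (B1), and (B2) accounting for every $S$-polynomial of $\mathcal{G}$, Theorem~\ref{thm:F5 characterization} delivers that $\mathcal{G}$ is a Gr\"obner basis of $\ideal{F'}=\ideal{F_i}$.
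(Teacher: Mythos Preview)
Your overall plan---work with $F'=(\reducedbasis_1,\ldots,\reducedbasis_{\#\reducedbasis},f_i)$, invoke Lemma~\ref{lem: admissible wrt F'} and Corollary~\ref{cor: isrewritable still valid}, and then appeal to Theorem~\ref{thm:F5 characterization}---is exactly the paper's. But there are two genuine gaps in your execution.

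\textbf{Pairs inside $\reducedbasis$.} Your case analysis (``pairs processed\ldots'', ``pairs discarded by $\critpair$\ldots'', ``pairs discarded by $\spol$\ldots'') only accounts for critical pairs that the loop of $\partialbasis$ ever \emph{forms}. But pairs with both components in $\prevbasis$ are never formed: the initial pairs are $\{\curridx\}\times\prevbasis$, and later pairs always involve a newly reduced index. These $S$-polynomials of $\mathcal{G}$ still must satisfy (A) or (B) for Theorem~\ref{thm:F5 characterization} to apply. They do---$\reducedbasis$ is a Gr\"obner basis, so each such $S$-polynomial has an $S$-representation, which is the content of case~(A)---but you have to say so. This is precisely the point the paper singles out (``the $S$-polynomials of $\reducedbasis$\ldots can be rewritten in the same manner as polynomials that satisfy case~(A)'').

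\textbf{Rewritable-by-zero via (B1).} Your reduction of this subcase to (B1) has two problems. First, you assert that the reduction of $\SPol(p,q)$ to zero is ``signature-preserving'' and hence yields a syzygy $\mathbf{H}$ of signature $\mu$; but Definition~\ref{def: rewritable by zero} explicitly warns that ``the reduction may not be signature-safe'', and nothing stops the zero reduction of $\SPol(\reducedbasis_a,\reducedbasis_b)$ from using some $\reducedbasis_c$ with $c>a$, so the resulting syzygy need not have signature $\mu$. Second, even granting non-minimality, (B1) in Theorem~\ref{thm:F5 characterization} requires the component to have signature index $i$; a rewritable-by-zero component lies in $\prevbasis$ and has index at most $\#\reducedbasis<i$, so (B1) as stated does not apply. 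The paper avoids both issues by treating this subcase not as (B1) but as an (A)-type step in the iterative rewriting of Theorem~\ref{thm:F5 characterization}'s proof: the $S$-polynomial of $\reducedbasis$ that witnesses rewritability-by-zero already has an $S$-representation with respect to $\reducedbasis\subset\mathcal{G}$, and substituting it lowers the head monomials exactly as in case~(A).
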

\begin{proof}
The proof is adapted easily from the proof of Theorem~\ref{thm:F5 characterization},
using Lemma~\ref{lem: admissible wrt F'} and Corollary~\ref{cor: isrewritable still valid}.
In particular, $S$-poly\-nomials that are rewritable by the zero
polynomial---that is, the $S$-poly\-nomials of $\reducedbasis$---can
be rewritten in the same manner as polynomials that satisfy case (A)
of Theorem~\ref{thm:F5 characterization}.
\end{proof}
Changing the algorithm's point of view so that some polynomials are
admissible with respect to $F'$ and not to $F$ implies the possibility
of introducing non-principal syzygies. Of course we would like F5C
to avoid any reductions to zero that F5 also avoids; otherwise the
benefit from a reduced Gr\"obner basis could be offset by the increased
cost of wasted computations. Hence we must show that if the syzygies
of the input $F$ are all principal, then F5C does not introduce reductions
to zero. Lemma~\ref{lem: admissible wrt F after translation} shows
that the signature of a polynomial indexed by $\currbasis\backslash\prevbasis$
in F5C corresponds to the signature that F5 would compute, ``translated''
by $\#\reducedbasis-\left(i-1\right)$.
\begin{lem}
\label{lem: admissible wrt F after translation}Let $i>2$. During
the $i$th pass through the while loop of $\basis$, let $k\in\currbasis\backslash\prevbasis$,
and $\sig\left(k\right)=\multiplier\basisvar_{\indexvar}$, where
the signature is with respect to $F'$. Then $\left(\multiplier\basisvar_{i},\poly\left(k\right)\right)$
is admissible with respect to $F$.\end{lem}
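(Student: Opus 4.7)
The plan is to lift an admissible $F'$-representation of $\poly(k)$ to an admissible $F$-representation, exploiting that $\ideal{\reducedbasis}=\ideal{F_{i-1}}$. First I would pin down the index $\indexvar$ and show it equals $\#\reducedbasis+1$. At the start of the $i$th pass, $\createreducedbasis$ has rewritten $\lp$ so that $\prevbasis=\{1,\ldots,\#\reducedbasis\}$ with entry $j$ carrying signature $\basisvar_j$; then $f_i$ is appended with signature $\basisvar_{\#\reducedbasis+1}$. The only routines that introduce entries into $\currbasis\setminus\prevbasis$ are $\spol$ and the unsafe branch of $\topreduction$. By Proposition~\ref{pro:props of sigs}(C), every $S$-polynomial signature is the $\siglt$-maximum of its two generators, and the unsafe branch of $\topreduction$ fires only when $u\sig(j)\siggt\sig(k)$. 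A short induction on the creation order of elements in $\currbasis\setminus\prevbasis$ then gives $\indexvar=\#\reducedbasis+1$.

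Next, by Lemma~\ref{lem: admissible wrt F'} and the admissibility invariant of Proposition~\ref{pro:red to pr over smaller sigs}, there exist $h'_1,\ldots,h'_{\#\reducedbasis+1}\in\polyring$ with
\[
\poly(k)=\sum_{j=1}^{\#\reducedbasis}h'_j\,\reducedbasis_j+h'_{\#\reducedbasis+1}\,f_i\quad\text{and}\quad\lt\left(h'_{\#\reducedbasis+1}\right)=\multiplier.
\]
Because $\reducedbasis\subset\ideal{F_{i-1}}$, for each $j$ I may choose $a_{j,1},\ldots,a_{j,i-1}\in\polyring$ such that $\reducedbasis_j=\sum_{\ell=1}^{i-1}a_{j,\ell}f_\ell$. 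Substituting and regrouping yields
\[
\poly(k)=\sum_{\ell=1}^{i-1}\Bigl(\sum_{j=1}^{\#\reducedbasis}h'_j\,a_{j,\ell}\Bigr)f_\ell+h'_{\#\reducedbasis+1}\,f_i.
\]
Defining $h_\ell$ as this inner sum for $\ell<i$, $h_i:=h'_{\#\reducedbasis+1}$, and $h_\ell:=0$ for $\ell>i$, yields an $F$-representation of $\poly(k)$ with $h_{i+1}=\cdots=h_m=0$ and $\lt(h_i)=\multiplier$. By Definition~\ref{def:signature}, $\multiplier\basisvar_i$ is then a signature of $\poly(k)$ with respect to $F$, so the labeled polynomial $(\multiplier\basisvar_i,\poly(k))$ is admissible with respect to $F$, as required.

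The main obstacle is step one --- the bookkeeping that $\indexvar=\#\reducedbasis+1$. Once that is in hand, the substitution step is frictionless because the $\reducedbasis_j$ lie in the span of $f_1,\ldots,f_{i-1}$ and therefore never contribute to the $f_i$-coefficient of the rewritten representation, so the leading-term condition $\lt(h_i)=\multiplier$ is preserved verbatim from $\lt(h'_{\#\reducedbasis+1})=\multiplier$. The only delicacy I see is confirming that the unsafe top-reduction branch cannot lower the signature index; but because that branch only replaces $\sig(k)$ by a strictly $\siglt$-larger signature, and $\sig(k)$ already carries the maximal available index $\#\reducedbasis+1$ (by the induction in step one), any new signature it produces must share that same index.
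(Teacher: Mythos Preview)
Your proof is correct and follows essentially the same approach as the paper: lift the $F'$-representation guaranteed by the signature to an $F$-representation by rewriting the $\reducedbasis$-part using $\ideal{\reducedbasis}=\ideal{F_{i-1}}$, leaving the $f_i$-coefficient (and hence $\lt(h_i)=\multiplier$) untouched. Your argument is in fact more thorough than the paper's on one point: the paper simply asserts ``$F'_{\indexvar}=f_i$'' without justification, whereas you supply the inductive bookkeeping that forces $\indexvar=\#\reducedbasis+1$ for every $k\in\currbasis\setminus\prevbasis$.
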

\begin{proof}
From the assumption that $\sig\left(k\right)=\multiplier\basisvar_{\indexvar}$,
we know that there exists $\mathbf{h}\in\polyring$ such that\[
\poly\left(k\right)=\sum_{\lambda=1}^{m}h_{\lambda}F'_{\lambda},\]
$h_{\indexvar+1}=\cdots=h_{m}=0$, and $\lt\left(h_{\indexvar}\right)=\multiplier$.
Recall that $F'_{\lambda}=\reducedbasis_{\lambda}$ for each $\lambda=1,\ldots,\indexvar-1$.
By Theorem~\ref{lem: termination implies basis}, there exist $H_{1},\ldots,H_{i-1}$
such that\[
\sum_{\lambda=1}^{\indexvar-1}h_{\lambda}F'_{\lambda}=\sum_{\lambda=1}^{m}H_{\lambda}f_{\lambda}\]
and $H_{\indexvar}=\cdots=H_{m}=0$. In addition, $F_{\indexvar}'=f_{i}$.
Hence\[
\poly\left(k\right)=\sum_{\lambda=1}^{\indexvar-1}H_{\lambda}f_{\lambda}+h_{\indexvar}f_{i},\]
whence $\left(\multiplier\basisvar_{i},\poly\left(k\right)\right)$
is admissible with respect to $F$.\end{proof}
\begin{thm}
\label{cor: no reduction to zero if F regular}If the syzygies of
$F$ are all principal syzygies, then F5C does not reduce any polynomial
to zero.\end{thm}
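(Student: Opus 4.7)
The plan is to argue by contradiction: assume F5C reduces some $\poly(k)$ to zero during the $i$-th iteration of the outer loop of $\basis$, and show that Faug\`ere's criterion in F5C must have rejected $\poly(k)$ at its creation. First, observe that $\poly(k)$ was created either by $\spol$ or by an unsafe top-reduction in $\topreduction$, and that its signature $\sig(k) = \multiplier\basisvar_\indexvar$ (with respect to $F'$) has $\indexvar = \#\reducedbasis + 1$: this is because $\partialbasis$ initializes its critical pair set as $\{\curridx\}\times\prevbasis$, and every subsequent $S$-polynomial or unsafe top-reduction inherits a signature whose index is the maximum over its components, which is always $\#\reducedbasis + 1$ (the index assigned to $f_i$ in $F'$).

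Second, I would apply Lemma~\ref{lem: admissible wrt F after translation} to obtain admissibility of $(\multiplier\basisvar_i, 0)$ with respect to the original $F$, so $\multiplier\basisvar_i$ is a signature of $0$ strictly larger than $\zerosig$. By Proposition~\ref{prop:non-min sig only if syz} there exists a syzygy $\mathbf{H}$ of $F$ with $\lt(H_i) = \multiplier$ and $H_{i+1} = \cdots = H_m = 0$. The hypothesis that all syzygies of $F$ are principal is equivalent to $F$ forming a regular sequence, so the identity $H_i f_i = -\sum_{\lambda<i} H_\lambda f_\lambda$ together with the non-zerodivisor property of $f_i$ modulo $\ideal{f_1,\ldots,f_{i-1}}$ forces $H_i \in \ideal{f_1,\ldots,f_{i-1}} = \ideal{\reducedbasis}$. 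Since $\reducedbasis$ is a Gr\"obner basis of this ideal, there exists $\ell$ with $\lt(\reducedbasis_\ell)\mid\multiplier$. If $\poly(k)$ came from $\spol$, the cofactor attached to whichever component of the originating critical pair carries index $\#\reducedbasis+1$ produces precisely $\multiplier$ as the $u\multiplier_\cdot$ expression tested by $\critpair$, and the divisibility $\lt(\reducedbasis_\ell)\mid\multiplier$ triggers the rejection there. If instead $\poly(k)$ came from an unsafe top-reduction, the same divisibility causes $\findreductor$ to skip the would-be reductor, so no polynomial with signature $\multiplier\basisvar_{\#\reducedbasis+1}$ is produced. Either case contradicts the assumed reduction to zero.

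The main obstacle will be the commutative-algebra step: justifying cleanly that ``all syzygies of $F$ are principal'' delivers the ideal membership $H_i \in \ideal{f_1,\ldots,f_{i-1}}$. One either appeals to the equivalence between this hypothesis and $F$ being a regular sequence, or decomposes $\mathbf{H}$ explicitly as a $\polyring$-linear combination of principal syzygies $f_a\basisvar_b - f_b\basisvar_a$ and inspects which summands contribute a non-cancelling term at index $i$. Once this is settled, the remaining argument is direct bookkeeping through $\createreducedbasis$, $\critpair$, $\findreductor$, and $\topreduction$.
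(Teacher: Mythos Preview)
Your proof is correct and follows the same overall architecture as the paper's: argue by contradiction/contrapositive, use Lemma~\ref{lem: admissible wrt F after translation} to transport the signature $\multiplier\basisvar_{\indexvar}$ (with respect to $F'$) to $\multiplier\basisvar_i$ (with respect to $F$), and then show that some $\lt(\reducedbasis_\ell)$ divides $\multiplier$, so that Faug\`ere's criterion in $\critpair$ (or the analogous check in $\findreductor$) would have rejected the pair before $\lp_k$ was ever created.

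The genuine difference is in how you justify the divisibility $\lt(\reducedbasis_\ell)\mid\multiplier$. The paper does not unpack the syzygy directly; instead it observes that the reduced basis $\reducedbasis$ and the unreduced basis $G_{i-1}$ that F5 would have computed share the same leading-term ideal, so if no $\lt(\reducedbasis_\ell)$ divides $\multiplier$ then no leading term of $G_{i-1}$ does either, and then invokes Corollary~\ref{cor: F5 no red to zero on regular inputs} to conclude that the syzygies of $F$ cannot all be principal. Your route avoids the appeal to the F5 result entirely: you use the equivalence between ``all syzygies of $F$ are principal'' and ``$F$ is a regular sequence'' to deduce $H_i\in\ideal{f_1,\ldots,f_{i-1}}=\ideal{\reducedbasis}$ from the non-zerodivisor property, and then read off the divisibility from the Gr\"obner-basis property of $\reducedbasis$. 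This is more self-contained---it does not lean on the correctness analysis of the unmodified F5---at the cost of importing the regular-sequence characterization from commutative algebra. You are also more explicit than the paper in separating the $\spol$ and unsafe-$\topreduction$ origins of $\lp_k$; the paper silently treats both under the heading ``the corresponding critical pair''.
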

\begin{proof}
Assume for the contrapositive that $k\in\currbasis$ and the algorithm
reduces $\poly\left(k\right)$ to zero. Suppose that we are on iteration
$i$ of the \verb|while| loop of $\basis$. Let $\sig\left(k\right)=\multiplier\basisvar_{\indexvar}$.
This signature of $\poly\left(k\right)$ is with respect to $F'$;
from Lemma~\ref{lem: admissible wrt F after translation} we infer
that $\multiplier\basisvar_{i}$ is a signature of $\poly\left(k\right)$
with respect to $F$.

Now $\prevbasis$ indexes a reduced Gr\"obner basis $\reducedbasis$
of $\ideal{F_{i}}$. The reduction to zero implies that $\critpair$
did not discard the corresponding critical pair, which in turn implies
that no head monomial of $\reducedbasis$ divided $\multiplier$.
By the definition of a reduced Gr\"obner basis, no head monomial
of the unreduced basis would have divided $\multiplier$ either. By
Corollary~\ref{cor: F5 no red to zero on regular inputs}, the syzygies
of $F$ are not principal.\end{proof}
\begin{rem*}
In our experiments with inputs whose syzygies are not principal, it
remains the case that F5C computes no more reductions to zero than
does F5. However, we do not have a proof of this. The difficulty lies
in the fact that signatures of polynomials in $\prevbasis$ need not
be the same in F5 and F5C. F5 computes different critical pairs, which
may generate different rewrite rules. This introduces the possibility
that F5 rejects some polynomials as rewritable that F5C does not.
However, we have not observed this in practice.
\end{rem*}
We conclude with two final, surprising results.
\begin{thm}
\label{thm: can skip rewrite rules for B}In $\createreducedbasis$,
there is no need to recompute the rewrite rules for $\reducedbasis$.\end{thm}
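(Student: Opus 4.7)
The plan is to prove that omitting lines \ref{line: start to create new rules}--\ref{line: finish creating new rules} of $\createreducedbasis$ preserves the conclusion of Theorem~\ref{lem: termination implies basis}. I would begin by pinpointing where the newly-added rules could matter. Every rule created in those lines lives in $\rules_\indexvar$ for some $\indexvar \leq \#\reducedbasis$, so it is only ever consulted through a call $\rewritable(u,k)$ with $\sig(k)$ of index at most $\#\reducedbasis$. After $\createreducedbasis$ finishes, the only labeled polynomials with signature index in that range are the elements of $\reducedbasis$ themselves; all subsequent polynomials generated inside $\partialbasisc$ descend from $f_{i+1}$ and therefore carry signatures of index $\#\reducedbasis + 1$.

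The call to $\rewritable$ inside $\findreductor$ can then be dismissed quickly: Line~\ref{line: normal form} of $\reduction$ has already applied $\normalform(\cdot,\reducedbasis,\ordlt)$ before $\findreductor$ runs, so no head monomial of any $\reducedbasis_j$ divides $\lt(\poly(k))$. The test $t' \mid t$ in $\findreductor$ fails for every $j$ corresponding to an element of $\reducedbasis$, and $\rewritable$ is never invoked on such a candidate. Thus the omitted rules can only influence the behavior of $\spol$ when checking the smaller component $v \poly(\ell)$ of a critical pair whose $\ell$ indexes an element of $\reducedbasis$.

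The main step is to show that the correctness argument behind Theorem~\ref{lem: termination implies basis} survives this weakening. Without the rules, a call $\rewritable(v,\ell)$ that would formerly have returned \verb|true| by pointing to the phantom $\lp_0$ now returns \verb|false|; $\spol$ therefore computes the corresponding S-polynomial and hands it to $\reduction$ with the natural signature assigned by Proposition~\ref{pro:props of sigs}, which is admissible with respect to $F'$ by Lemma~\ref{lem: admissible wrt F'}. Definition~\ref{def: list of rewritings} remains intact because every surviving rule still points to an actual polynomial in $\lp$, and Corollary~\ref{cor: isrewritable still valid} simply loses its ``rewritable by the zero polynomial'' alternative. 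The rewriting loop in the proof of Theorem~\ref{thm:F5 characterization}, invoked inside the proof of Theorem~\ref{lem: termination implies basis}, still terminates with an $S$-representation of each $S$-polynomial without ever needing case (B2) for a $\reducedbasis$-signature: the formerly-skipped S-polynomials now simply appear among those processed by $\reduction$ and fall under case (A) via ordinary signature-preserving reduction by $\currbasis$.

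The delicate point I expect to have to nail down is that removing the phantom entries from $\rules$ does not cause the rewriting loop inside the proof of Theorem~\ref{thm:F5 characterization} to stall on a rewritable signature whose rewriter is $\lp_0$. Because every remaining entry of $\rules$ points to an honest polynomial in $\lp$ of strictly larger index (by Propositions~\ref{pro: top-reds recorded in rules} and~\ref{pro:rewriter has larger index than rewritable}), each invocation of Proposition~\ref{pro: structure of rewriting} produces an actual lower-signature representation, which supplies exactly the well-founded descent needed for the finiteness argument in Theorem~\ref{thm:F5 characterization} to apply unchanged.
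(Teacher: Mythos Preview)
Your approach diverges substantively from the paper's, and it establishes a weaker conclusion than the one the paper is after.

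You argue that if the phantom rules are omitted, $\spol$ may compute \emph{additional} $S$-polynomials (those formerly rejected because $\rewritable(v,\ell)$ pointed to $\lp_0$), but that these extra computations are harmless: they get processed by $\reduction$ and fall under case~(A) of Theorem~\ref{thm:F5 characterization}, so correctness survives. The paper instead proves that the algorithm's behavior is \emph{identical} with or without the phantom rules: no extra $S$-polynomials are ever computed. Its argument is that whenever a phantom rule would reject the component $\ltmul{q,p}q$ with $q=\poly(\ell)$, $\ell\in\prevbasis$, the \emph{other} component $\ltmul{p,q}p$ with $p=\poly(k)$, $k\in\currbasis\setminus\prevbasis$, is already rejected anyway. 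The key step is a divisibility argument: if $\ltmul{q,\reducedbasis_j}\mid\ltmul{q,p}$, then $\lcm(\lt p,\lt\reducedbasis_j)$ divides $\lcm(\lt p,\lt q)$, so the pair $(p,\reducedbasis_j)$ was processed earlier and either produced a rewrite rule in $\rules_i$ making $\ltmul{p,q}p$ rewritable, or was itself rejected for reasons that transfer to $\ltmul{p,q}p$.

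This distinction is not cosmetic. Your argument shows only that omitting the rules preserves correctness of the output, potentially at the cost of extra $S$-polynomial computations and extra entries in $\lp$ and $\rules$. The paper's argument shows no extra work occurs, which is precisely what is meant by the rules being ``unnecessary'' and what is needed to support Corollary~\ref{cor: F5C w/out sigs of B} (where the whole point is to avoid overhead, not merely to stay correct). Your preliminary observation about $\findreductor$---that the prior $\normalform$ call guarantees no $\reducedbasis_j$ passes the divisibility test, so the rewritability check there is never reached for $j\in\prevbasis$---is correct and matches the paper's opening remark that top-reductions by $\reducedbasis$ do not consult these rules.
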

\begin{proof}
When performing top-reductions by elements of $\reducedbasis$, the
algorithm checks neither whether a polynomial multiple is rewritable,
nor whether it satisfies Faug\`ere's criterion. Thus we only need
to verify the statement of the theorem in the context of $S$-poly\-nomial
creation. Suppose therefore that we are computing $\bases_{i}$, the
Gr\"obner basis of $\left\langle F_{i}'\right\rangle $ where $F_{i}'=\left(\reducedbasis_{1},\ldots,\reducedbasis_{\#\reducedbasis},f_{i}\right)$,
and while computing the $S$-poly\-nomial of $p=\poly\left(k\right)$
and $q=\poly\left(\ell\right)$, where $k\in\currbasis\backslash\prevbasis$
and $\ell\in\prevbasis$, $\rewritable$ reports that $\ltmul{q,p}q$
is rewritable.

We claim that it will also reject $\ltmul{p,q}p$. Since $\ltmul{q,p}q$
is rewritable, there exists $j\in\left\{ 1,\ldots,\#\reducedbasis\right\} $
such that\[
\frac{\lcm\left(\lt\left(q\right),\left(\lt\left(\reducedbasis_{j}\right)\right)\right)}{\lt\left(q\right)}\quad\mbox{divides}\quad\frac{\lcm\left(\lt\left(p\right),\lt\left(q\right)\right)}{\lt\left(q\right)}.\]
It follows that $\lcm\left(\lt\left(q\right),\lt\left(\reducedbasis_{j}\right)\right)$
divides $\lcm\left(\lt\left(p\right),\lt\left(q\right)\right)$. 
Thus\[
\frac{\lcm\left(\lt\left(p\right),\lt\left(\reducedbasis_{j}\right)\right)}{\lt\left(p\right)}\mbox{ divides }\frac{\lcm\left(\lt\left(p\right),\lt\left(q\right)\right)}{\lt\left(p\right)}=\ltmul{p,q}.\]
The design of the algorithm implies that the $S$-poly\-nomial of
$p$ and $\reducedbasis_{j}$ would have been considered before the
$S$-poly\-nomial of $p$ and $q$. This leads to two possibilities.
\begin{enumerate}
\item The $S$-poly\-nomial of $p$ and $\reducedbasis_{j}$ was computed,
so that the rewrite rule $\left(\ltmul{p,\reducedbasis_{j}},\lambda\right)$
appears in $\rules_{i}$ for some $\lambda\in\currbasis$. Hence $\rewritable\left(\ltmul{p,q},k\right)$
returns \verb|true|.
\item The $S$-poly\-nomial of $p$ and $\reducedbasis_{j}$ was rejected,
either because $\ltmul{p,\reducedbasis_{j}}p$ is rewritable or because
it satisfies Faug\`ere's criterion. Either one implies that the $\ltmul{p,q}p$
will also be rejected.
\end{enumerate}
\noindent Hence there is no need to compute the rewrite rules for
$\reducedbasis$.\end{proof}
\begin{cor}
\label{cor: F5C w/out sigs of B}We can reformulate F5C so that $\createreducedbasis$
is unnecessary, and the list $\rules$ records only signatures of
polynomials indexed by $\currbasis\backslash\prevbasis$.\end{cor}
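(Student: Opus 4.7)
The plan is to derive the corollary as a direct consequence of Theorem~\ref{thm: can skip rewrite rules for B}, combined with an inspection of what remains of $\createreducedbasis$ once its redundant work is stripped away. The bulk of $\createreducedbasis$ consists of three tasks: (i) interreducing $\left\{\poly(k):k\in\prevbasis\right\}$ to obtain $\reducedbasis$; (ii) resetting $\lp$ so that the polynomials of $\reducedbasis$ acquire fresh signatures $\basisvar_j$; and (iii) appending $\#\reducedbasis$ new (empty) sub-lists to $\rules$ and populating them with entries $(\ltmul{\reducedbasis_j,\reducedbasis_k},0)$ corresponding to all $S$-pairs within $\reducedbasis$.

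First I would invoke Theorem~\ref{thm: can skip rewrite rules for B} to delete task~(iii) entirely: the rules recording that $S$-polynomials of $\reducedbasis$ reduce to zero are never consulted, because any $S$-pair $(p,q)$ with $p=\poly(k)$, $k\in\currbasis\backslash\prevbasis$ and $q\in\reducedbasis$ that would be rejected by such a rule is already rejected either because the corresponding pair $(p,\reducedbasis_j)$ was previously computed (and thus left a genuine rule in $\currbasis\backslash\prevbasis$), or because it satisfies Faug\`ere's criterion or is rewritable by a polynomial actually in $\lp$. With task~(iii) gone, the only remaining function of $\createreducedbasis$ is to set $\reducedbasis$ (task i) and rebuild the first $\#\reducedbasis$ entries of $\lp$ (task ii). These two operations can be performed inline in $\basis$ immediately after the test for $1\in\currbasis$; hence no separate subalgorithm is needed, proving the first half of the statement.

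For the second half, I would observe that after this inlining, the only entries ever appended to $\rules$ originate from calls to $\addrule$ made inside $\spol$ and $\topreduction$, which exclusively concern $S$-polynomials newly computed during the current invocation of $\partialbasisc$. The labeled polynomials indexed by $\prevbasis$, i.e.\ the elements of $\reducedbasis$, never trigger $\addrule$, so $\rules$ contains rewrite data only for polynomials whose indices lie in $\currbasis\backslash\prevbasis$. Correctness of the reformulated algorithm then follows because Corollary~\ref{cor: isrewritable still valid} and Theorem~\ref{lem: termination implies basis} rely on $\rewritable$ detecting exactly those pairs that Theorem~\ref{thm: can skip rewrite rules for B} already handled without the phantom rules.

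The main obstacle, and the reason this corollary is not entirely trivial, is justifying that removing the phantom rules does not make some later $S$-polynomial escape detection. This is precisely the content of Theorem~\ref{thm: can skip rewrite rules for B}: any pair whose rejection would have relied on a phantom rule is rejected anyway through the chain $(p,q)\rightsquigarrow(p,\reducedbasis_j)$, by divisibility of the relevant lcm's. Once that theorem is in hand, the rest of the argument is bookkeeping, and the reformulation is immediate.
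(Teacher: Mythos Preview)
Your argument is correct as far as it goes, and it does invoke the right engine (Theorem~\ref{thm: can skip rewrite rules for B}). However, it reads the phrase ``$\createreducedbasis$ is unnecessary'' in the weakest possible sense: you keep task~(ii)---the reset of $\lp$ with fresh signatures $\basisvar_j$---and merely move it inline into $\basis$. Under that reading the first half of the corollary becomes almost content-free, since any subroutine can be inlined.

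The paper's proof takes a genuinely different and stronger route. It argues that task~(ii) is itself unnecessary: once Theorem~\ref{thm: can skip rewrite rules for B} removes the phantom rules, the signatures of polynomials indexed by $\prevbasis$ are never consulted anywhere---$\spol$ always uses the larger signature, and $\topreduction$ reduces by $\reducedbasis$ without inspecting signatures. The paper then exhibits explicit line-by-line edits to $\critpair$, $\spol$, and $\findreductor$ replacing the signature-index tests $\indexvar=i$ by membership tests $k\notin\prevbasis$, and flattens $\rules$ to a single list. In this reformulation one only interreduces (as in F5R) and never reassigns signatures at all; the resulting algorithm still behaves like F5C because the reduced basis is what populates $\prevbasis$ for the next round.

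What each approach buys: yours is shorter and requires no new algorithm surgery, but it leaves the signature reset in place, so it does not really explain \emph{why} the separate subroutine was dispensable. The paper's approach establishes the sharper fact that the whole apparatus of fresh signatures for $\reducedbasis$ is superfluous, and in doing so makes precise which lines of the remaining subalgorithms must change.
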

\begin{proof}
Theorem~\ref{thm: can skip rewrite rules for B} implies that we
do not need the signatures of polynomials indexed by $\prevbasis$
for the rewrite rules. In fact, this is the only reason we might need
their signatures, since $\spol$ always uses the larger signature
to create an $S$-poly\-nomial, and $\topreduction$ top-reduces
by $\reducedbasis$ without checking signatures. Hence the signatures
of polynomials indexed by $\prevbasis$ are useless.

We now indicate
how to revise the algorithm to take this into account.
As in F5R, replace line~\ref{line: definition of B} of $\basisoriginal$
with

\begin{center}
\ref{line: definition of B}\algindent Let $\reducedbasis$ be the
interreduction of $\left\{ \poly\left(\lambda\right):\;\lambda\in\prevbasis\right\} $.
\par\end{center}

\noindent Subsequently, change line~\ref{line: critpair, PS-reducible 1}
of $\critpair$ to

\begin{center}
\ref{line: critpair, PS-reducible 1}\algindent \textbf{if }$k\not\in\prevbasis$
\textbf{and} $u_{1}\cdot\multiplier_{1}$ is top-reducible by $\prevbasis$
\par\end{center}

\noindent and line~\ref{line: critpair, PS-reducible 2} of $\critpair$
to

\begin{center}
\ref{line: critpair, PS-reducible 2}\algindent \textbf{if} $\ell\not\in\prevbasis$
\textbf{and} $u_{2}\cdot\multiplier_{2}$ is top-reducible by $\prevbasis$
\par\end{center}

\noindent Similarly adjust line~\ref{line: spol rewritiable?} of
$\spol$ and line~\ref{line: FindReductor test} of $\findreductor$
so that they do not check polynomials of $\prevbasis$. Modify the
definition of $\rules$ so that it is only one list, not a list of
lists, and $\findrewriting$ so that it only searches backwards through
$\rules$, rather than finding which list in $\rules$ to check. Theorem~\ref{thm: can skip rewrite rules for B}
implies that if the original F5C terminates correctly, then this modified
version of F5C also terminates correctly.\end{proof}
\begin{rem*}
Theorem~\ref{thm: can skip rewrite rules for B} applies only to
F5C, not to F5. The difference is that for any $\ell\in\prevbasis$,
F5C guarantees that $\sig\left(\ell\right)=\sigformat{\multiplier}{\ell}$
where $\multiplier=1$. This is not the case in F5.

The prototype implementations of F5C are primarily for educational
purposes, so for the sake of clarity we implement the given pseudocode
without the optimization outlined in the proof of Corollary~\ref{cor: F5C w/out sigs of B}.
\end{rem*}

\subsubsection*{Acknowledgments}

The authors wish to thank the Centre for Computer Algebra at Universit\"at
Kaiserslautern for their hospitality, encouragement, and assistance
with the \textsc{Singular} computer algebra system. They would also
like to thank Martin Albrecht, who made a number of helpful comments
regarding the paper.

\section*{Appendix: Using the \textsc{Singular} and Sage prototype implementations}

The \textsc{Singular} prototype implementation contains three functions
\texttt{basis}, \texttt{basis\_r}, and \texttt{basis\_c} to compute
the Gr\"obner basis of an ideal. An example run with \texttt{basis}
is shown in Figure~\ref{fig: Singular example}. %
\begin{figure}
\caption{\label{fig: Singular example}Example run of the Singular prototype
system}

\texttt{> LIB {}``f5\_library.lib'';}

\texttt{// {*}{*} loaded f5\_library.lib (1.1,2009/01/26\textquotedbl{})}

\texttt{> ring R = 0,(x,y,z,t),dp;}

\texttt{> ideal i = yz3 - x2t2, xz2 - y2t, x2y - z2t;}

\texttt{> ideal B = basis(i);}

\texttt{Iteration 2}

\texttt{Processing 1 critical pairs of degree 5}

\texttt{Processing 1 critical pairs of degree 7}

\texttt{4 polynomials in basis}

\texttt{Iteration 3}

\texttt{Processing 1 critical pairs of degree 5}

\texttt{Processing 1 critical pairs of degree 6}

\texttt{Processing 4 critical pairs of degree 7}

\texttt{Processing 1 critical pairs of degree 8}

\texttt{10 polynomials in basis}

~

\texttt{number of zero reductions: 0}

\texttt{number of elements in g: 10}

\texttt{cpu time for gb computation: 50/1000 sec}

\texttt{> B;}

\texttt{B{[}1{]}=yz3-x2t2}

\texttt{B{[}2{]}=x2y-z2t}

\texttt{B{[}3{]}=xz2-y2t}

\texttt{B{[}4{]}=xy3t-z4t}

\texttt{B{[}5{]}=z6t-y5t2}

\texttt{B{[}6{]}=y3zt-x3t2}

\texttt{B{[}7{]}=z5t-x4t2}

\texttt{B{[}8{]}=y5t2-x4zt2}

\texttt{B{[}9{]}=x5t2-y2z3t2}

\texttt{B{[}10{]}=y6t2-xy2zt4}

\texttt{> }
\end{figure}
 While computing the Gr\"obner basis, this implementation also prints
for each degree the size of $\critpairs_{d}$, the set of critical
pairs passed to $\spol$. This implementation checks in both $\critpair$
and $\spol$ for the rewritten criterion, so $\#\critpairs_{d}$ is
sometimes smaller here than in Faug\`ere's paper, but the reader
can compare the results to see that the same basis is generated. A
large number of benchmark systems can be obtained by downloading the
companion file

\begin{center}
\texttt{http://www.math.usm.edu/perry/Research/f5ex.lib} .
\par\end{center}

\noindent For a further introduction to \textsc{Singular}, see \citep{GreuelPfister2008}.

The Sage prototype implementation contains four classes, \texttt{F5},
\texttt{F5R}, \texttt{F5C}, and \texttt{F4F5}. These can be called
by creating the appropriate class with a Sage ideal. An example run
with \texttt{F4F5} is shown in Figure~\ref{fig: Sage example}.%
\begin{figure}
\caption{\label{fig: Sage example}Example run of the Sage prototype implementation}

\texttt{sage: attach \textquotedbl{}/home/perry/common/Research/SAGE\_programs/f5.py\textquotedbl{}}

\texttt{sage: f5 = F4F5()}

\texttt{sage: R.<x,y,z,t> = QQ{[}{]}}

\texttt{sage: I = R.ideal(y{*}z\textasciicircum{}3-x\textasciicircum{}2{*}t\textasciicircum{}2,
x{*}z\textasciicircum{}2 - y\textasciicircum{}2{*}t, x\textasciicircum{}2{*}y
- z\textasciicircum{}2{*}t)}

\texttt{sage: B = f5(I)}

\texttt{Increment 1}

\texttt{1 critical pairs}

\texttt{Processing 1 pairs of degree 5 of 1 total}

\texttt{1 polynomials generated}

\texttt{1 x 2, 1, 0}

\texttt{1 polynomials left}

\texttt{Processing 1 pairs of degree 7 of 1 total}

\texttt{1 polynomials generated}

\texttt{1 x 2, 1, 0}

\texttt{1 polynomials left}

\texttt{Ended with 4 polynomials}

\texttt{Increment 2}

\texttt{4 critical pairs}

\texttt{Processing 1 pairs of degree 5 of 4 total}

\texttt{1 polynomials generated}

\texttt{1 x 2, 1, 0}

\texttt{1 polynomials left}

\texttt{Processing 2 pairs of degree 6 of 6 total}

\texttt{1 polynomials generated}

\texttt{1 x 2, 1, 0}

\texttt{1 polynomials left}

\texttt{Processing 4 pairs of degree 7 of 6 total}

\texttt{2 polynomials generated}

\texttt{4 x 6, 4, 0}

\texttt{2 polynomials left}

\texttt{Processing 2 pairs of degree 8 of 2 total}

\texttt{1 polynomials generated}

\texttt{2 x 3, 2, 0}

\texttt{1 polynomials left}

\texttt{Ended with 10 polynomials}

\texttt{sage: B}

\texttt{{[}x{*}z\textasciicircum{}2 - y\textasciicircum{}2{*}t,}

\texttt{x\textasciicircum{}2{*}y - z\textasciicircum{}2{*}t,}

\texttt{x{*}y\textasciicircum{}3{*}t - z\textasciicircum{}4{*}t,}

\texttt{y{*}z\textasciicircum{}3 - x\textasciicircum{}2{*}t\textasciicircum{}2,}

\texttt{y\textasciicircum{}3{*}z{*}t - x\textasciicircum{}3{*}t\textasciicircum{}2,}

\texttt{z\textasciicircum{}5{*}t - x\textasciicircum{}4{*}t\textasciicircum{}2,}

\texttt{y\textasciicircum{}5{*}t\textasciicircum{}2 - x\textasciicircum{}4{*}z{*}t\textasciicircum{}2,}

\texttt{x\textasciicircum{}5{*}t\textasciicircum{}2 - z\textasciicircum{}2{*}t\textasciicircum{}5{]}}

\texttt{sage: }
\end{figure}
 As in the \textsc{Singular} implementation, run-time data is printed.
In this case, the number of critical pairs in $\critpairs_{d}$, the
number of polynomials generated by $\spol$, and the size of the matrix
used for Gaussian elimination. No special techniques are used for
sparse matrices in this version, so it is rather slow (in fact, it
is slower than the other \texttt{F5}'s). The reader should notice
that in this version, the output has been interreduced, so there are
only~8 polynomials in the final result. For more information on Sage,
visit

\begin{center}
\texttt{http://www.sagemath.org/ }.
\par\end{center}

\bibliographystyle{elsart-harv}
\bibliography{/home/perry/common/Research/researchbibliography}

\end{document}